\numberwithin{equation}{section}
\newtheorem{thm}{Theorem}[section]
\newtheorem{example}{Example}
\newtheorem{prop}[thm]{Proposition}
\newtheorem{lem}[thm]{Lemma}
\newtheorem{conj}[thm]{Conjecture}
\newtheorem{dfn}[thm]{Definition}
\newtheorem{remark}[thm]{Remark}
\newtheorem{cor}[thm]{Corollary}
\newlist{steps}{enumerate}{1}
\setlist[steps, 1]{label = Step \arabic*:}
\numberwithin{equation}{section}
\newcommand{\C}{\mathbb{C}}
\newcommand{\F}{\mathbb{F}}
\newcommand{\N}{\mathbb{N}}
\newcommand{\Q}{\mathbb{Q}}
\newcommand{\Z}{\mathbb{Z}}
\newcommand{\mcO}{\mathcal{O}}
\newcommand{\mfc}{\mathfrak{c}}
\newcommand{\mfm}{\mathfrak{m}}
\newcommand{\mfn}{\mathfrak{n}}
\newcommand{\mfp}{\mathfrak{p}}
\newcommand{\mfq}{\mathfrak{q}}
\newcommand{\mfP}{\mathfrak{P}}
\newcommand{\mfQ}{\mathfrak{Q}}
\newcommand{\GL}{\mathrm{GL}}
\newcommand{\Cl}{\mathrm{Cl}}
\newcommand{\Gal}{\mathrm{Gal}}
\def\1{1\!\!1}
\newcommand{\mrm}[1]{\mathrm{#1}}
\title[Asymptotic Fermat equation of signature $(r, r, p)$ over $K$]{Asymptotic Fermat equation of signature $(r, r, p)$ over totally real fields}
\author[S. Jha]{Somnath Jha}
\address[S. Jha]{Department of Mathematics and Statistics, IIT Kanpur, Kanpur 208016, India}
\email{jhasom@gmail.com}
\author[S. Sahoo]{Satyabrat Sahoo}
\address[S. Sahoo]{Yau Mathematical Sciences Center, Tsinghua University, Beijing 100084, China
	\& \newline
	Department of Mathematics and Statistics, IIT Kanpur, Kanpur 208016, India}
\email{sahoos@mail.tsinghua.edu.cn}
\keywords{Diophantine equations, Modularity, Semistability, Galois representations, Level lowering, Totally real fields}
\subjclass[2020]{Primary 11D41, 11R80; Secondary  11F80, 11G05, 11Y40}
\date{\today}
\begin{document}
	\begin{abstract}
		Let $K$ be a totally real number field and $ \mcO_K$ be the ring of integers of $K$. This manuscript examines the asymptotic solutions of the Fermat equation of signature $(r, r, p)$, specifically $x^r+y^r=dz^p$ over $K$,
		where $r,p \geq5$ are rational primes and odd $d\in \mcO_K \setminus \{0\}$. For a certain class of fields $K$, we first prove that the equation $x^r+y^r=dz^p$ has no asymptotic solution $(a,b,c) \in \mcO_K^3$ with $2 |c$. We also study the asymptotic solutions $(a,b,c) \in \mcO_K^3$ to the equation $x^5+y^5=dz^p$ when $2 \nmid c$. We use the modular method to prove these results. 
	\end{abstract}
	
	\maketitle
	
	\section{Introduction}
	After Wiles' groundbreaking proof of Fermat's Last Theorem, significant progress has been made in studying the generalized Fermat equation, i.e.,  
	\begin{equation}
		\label{generalized Fermat eqn}
		Ax^p+By^q=Cz^r, \text{ where } A,B,C, p,q,r \in \Z \setminus \{0\},
	\end{equation}
	$\gcd(A,B,C)=1$ and $ p,q,r \geq 2$ with $\frac{1}{p} +\frac{1}{q}+ \frac{1}{r} <1$. We say $(p,q,r)$ as the signature of \eqref{generalized Fermat eqn}.
	The generalized Fermat equation~\eqref{generalized Fermat eqn} is subject to the following conjecture. 
	\begin{conj}
		\label{DG conj}
		Fix  $ A,B,C \in \Z $ with $\gcd(A,B,C)=1$. Then, over all choices of prime exponents $ p,q,r$ with $\frac{1}{p} +\frac{1}{q}+ \frac{1}{r} <1$, the equation~\eqref{generalized Fermat eqn} has only finitely many non-trivial primitive integer solutions (here the solutions like $1^p+2^3=3^2$ counted only once for all $p$).
	\end{conj}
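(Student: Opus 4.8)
Conjecture~\ref{DG conj} is the Fermat--Catalan (Tijdeman--Zagier) conjecture, and it is open; so rather than a proof I will describe the attack I would actually mount and point to where it stalls. My first step would be to separate the \emph{vertical} from the \emph{horizontal} difficulty. For each \emph{fixed} admissible signature $(p,q,r)$ the appropriate (desingularised or quotiented) curve attached to $Ax^p+By^q=Cz^r$ has genus $\geq 2$, so Faltings' theorem gives only finitely many primitive solutions; this is the Darmon--Granville theorem. Hence Conjecture~\ref{DG conj} is \emph{equivalent} to the assertion that, with the stated convention (solutions like $1^p+2^3=3^2$ counted once), there are no primitive solutions beyond a short known list --- for $A=B=C=1$, the ten solutions $1+2^3=3^2$, $2^5+7^2=3^4$, $7^3+13^2=2^9$, $2^7+17^3=71^2$, $3^5+11^4=122^2$ together with five larger ones such as $17^7+76271^3=21063928^2$. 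So the real content is a \emph{uniform} non-existence statement across the infinitely many signatures.

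To kill an infinite family of signatures I would run the modular method: attach to a hypothetical primitive solution a Frey object --- an elliptic curve, or in general a Frey abelian variety or Frey representation --- whose mod-$\ell$ Galois representation has very small Serre conductor, prove modularity, apply level-lowering, and then either observe that the predicted newform space is empty or extract a contradiction from a congruence at a prime of multiplicative reduction, with the finitely many small exponents cleared by descent or Chabauty. This is the Hellegouarch--Frey curve for $(p,p,p)$ with the work of Wiles and Taylor--Wiles plus Ribet; the Darmon--Merel curves for $(p,p,2)$ and $(p,p,3)$; and, for a signature such as $(r,r,p)$ with $r$ fixed, a Frey curve over $\Q(\zeta_r)^+$ handled via modularity over totally real fields --- precisely the mechanism of the present paper. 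Assembling every family reachable this way already proves Conjecture~\ref{DG conj} whenever some exponent equals $2$, $3$, or the other two, and asymptotically in many further cases.

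The main obstacle --- and the reason this programme does not terminate in an unconditional theorem --- is that there is no single construction covering \emph{all} signatures at once: for a genuinely mixed signature $(p,q,r)$ with three distinct primes $\geq 7$ no Frey elliptic curve is known, while the higher-dimensional Frey varieties of Darmon's program require modularity and ``large residual image'' inputs that are themselves open, so the argument cannot be closed and the problem cannot be reduced to a finite computation. The one clean reduction I can actually carry out is to a stronger conjecture: Conjecture~\ref{DG conj} follows from the $abc$ conjecture, since $\frac1p+\frac1q+\frac1r<1$ together with $abc$ bounds $\max(|A|x^p,|B|y^q,|C|z^r)$ and hence bounds $x,y,z$ and the exponents themselves. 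So the true gap is the absence of an unconditional Diophantine input --- an effective or uniform version of Faltings, or a Frey-variety-plus-modularity package general enough for arbitrarily mixed large exponents.
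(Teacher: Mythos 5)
You have correctly recognised that the statement is not a theorem of the paper but the open Fermat--Catalan (Tijdeman--Zagier) conjecture, stated here only as a motivating Conjecture; the paper offers no proof and immediately retreats to the fixed-signature finiteness of Darmon--Granville, exactly as you do. Your survey of the landscape is accurate: Faltings gives finiteness per signature, the modular method (Frey curves and Frey abelian varieties, modularity, level-lowering) disposes of various infinite families including the $(r,r,p)$ families treated in this paper, the known primitive solutions for $A=B=C=1$ are the ten you list, the conjecture follows from $abc$, and the genuine obstruction is the absence of any Frey construction plus modularity input covering mixed signatures with three large distinct primes. Nothing more can honestly be said; identifying the statement as open and locating the gap precisely is the right response, and it matches the paper's own treatment.
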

	Recall that a solution $(a,b,c) \in \Z^3$ of the equation \eqref{generalized Fermat eqn} is said to be non-trivial if $abc \neq 0$ and primitive if $\gcd(a,b,c)=1$.
	In \cite{DG95}, Darmon and Granville proved that for fixed integers $ A,B,C \in \Z \setminus \{0\}$ and for fixed primes $ p,q,r$ with $\frac{1}{p} +\frac{1}{q}+ \frac{1}{r} <1$, the equation~\eqref{generalized Fermat eqn} has only finitely many non-trivial primitive integer solutions.
	
	Throughout, we fix a totally real number field. We denote it by $K$, and by $\mathcal{O}_K$ its ring of integers. In this article, we study the solutions of the Fermat equation of signature $(r,r,p)$, i.e.,
	\begin{equation}
		\label{r,r,p over Z}
		x^r+y^r=dz^p,
	\end{equation}
	over $K$, where $r,p\geq 5$ are rational primes and $d \in \mcO_K \setminus \{0\}$.
	
	At first, we review the known results of the Diophantine equation~\eqref{r,r,p over Z} with $r=5$. The integer solutions of the equation~\eqref{r,r,p over Z} were first studied by Billerey in~\cite{B07} for $d=2^\alpha.3^\beta. 5^\gamma$ with $0 \leq \alpha, \beta, \gamma \leq 4$. In~\cite{BD10}, Billerey and  Dieulefait first generalizes the results of \cite{B07} to $d=2^\alpha.3^\beta. 5^\gamma$ with $\alpha \geq 2$ and $\beta, \gamma$ arbitary. Then, in \cite{BD10}, they also proved that the equation~\eqref{r,r,p over Z} with $d=7$ (respectively $d=13$) has no non-trivial primitive integer solutions for $p \geq 13$ (respectively $p \geq 19$). Further, in \cite{DF14}, Dieulefait and Freitas proved that the equation~\eqref{r,r,p over Z} with $d=2,3$ has no non-trivial primitive integer solutions for a set of primes of density $\frac{3}{4}$. Finally, in \cite{BCDF19}, Billerey, Chen, Dieulefait and Freitas proved that the equation~\eqref{r,r,p over Z} with $d=3$ has no non-trivial primitive integer solutions for $p \geq 2$.
	
	In \cite{F15}, Freitas constructed a multi-Frey family of curves to study the solutions of the equation~\eqref{r,r,p over Z} and proved that the equation~\eqref{r,r,p over Z} (with $r=7$, $d=3$) has no non-trivial primitive integer solutions for $p> (1+3^{18})^2$. In \cite{BCDF23b}, the authors proved that the equation $x^7+y^7=3z^n$ (respectively $x^7+y^7=z^n$) with integers $n \geq 2$ has non-trivial primitive integer solution $(a,b,c)$ (respectively $(a,b,c)$ of the form $2 |a+b$ or $7 |a+b$).  
	In \cite{BCDF23a}, the authors studied the integer solutions of the equation $x^{11}+y^{11}=z^n$ with integers $n \geq 2$ using Frey abelian varieties. The integer solutions of the equation \eqref{r,r,p over Z} for $r=13$ were first studied in \cite{DF13}. In \cite{BCDF19} (respectively \cite{BCDDF23}), the authors proved the equation \eqref{r,r,p over Z} with $r=13$, $d=3$, and $p \neq 7$ (respectively for all $p$) has no non-trivial primitive integer solution.  
	
	In \cite{FN24}, the authors proved that for a set of primes $p$ with positive density and for fixed $r, d$ with $r,p \nmid d$, the equation~\eqref{r,r,p over Z} has no non-trivial primitive integer solution $(a,b,c)$ of the form $2 |a+b$ or $r |a+b$. In a recent preprint \cite{KMO24}, it is studied the asymptotic integer solutions of the equation \eqref{r,r,p over Z} by assuming the weak Frey-Mazur conjecture  (cf. \cite[Conjecture 3.11]{KMO24}) and the Eichler–Shimura conjecture  (cf. \cite[Conjecture 3.9]{KMO24}).  
	
	In \cite{M23}, Mocanu studied the asymptotic solution of a certain type to the equation $x^r+y^r=z^p$ over $K$ using the modular method. A comprehensive survey on the solutions of the Diophantine equation\eqref{generalized Fermat eqn} over $K$ can be found in \cite{KS24 survey article}. In general, the literature for the generalized Fermat equation of signature $(p,p,p)$, $(p,p,2)$, and $(p,p,3)$ over $K$ using the modular method is given by the following table. 
	\begin{table}[hbt!]
		\centering
		\begin{tabular}{|c|c|c|}  
			\hline  
			Fermat-type equation & Solutions over $\Q$ & Asymptotic solutions over $K$ \\ \hline
			$x^p+y^p=z^p$ & \cite{W95} & \cite{JM04}, ~\cite{FS15}, \cite{FS15b} \\ \hline
			$Ax^p+By^p=Cz^p$ & \cite{K97}, \cite{DM97}, \cite{R97} & \cite{D16}, \cite{KS23 Diophantine1}, \cite{S24 GFLT}\\ \hline
			$x^p+y^p=z^2$ & \cite{DM97} &~\cite{IKO20}, \cite{M22}, \cite{KS23 Diophantine1}, \cite{KS23 Diophantine2} \\ \hline
			$Ax^p+By^p=Cz^2$& \cite{I03}, \cite{S03}, \cite{BS04}, \cite{C24} & \cite{KS23 Diophantine2} for $C=1$, $2$\\ \hline
			$x^p+y^p=z^3$ & \cite{DM97} & \cite{M22}, \cite {IKO23}, \cite{KS24 GAFLT}\\ \hline  
			$Ax^p+By^p=Cz^3$& \cite{BVY04} & \cite{KS24 GAFLT}\\ \hline
		\end{tabular}  
	\end{table}

	In this manuscript, we study the asymptotic solution of the Diophantine equation $x^r+y^r=dz^p$ over
	$K$, where $r,p\geq 5$ rational primes and $d\in \mcO_K \setminus \{0\}$. Let $\zeta_r$ be a primitive $r$th root of unity in $\C$ and  $K^+:= K(\zeta_r+ \zeta_r^{-1})$. The main results of this article are Theorems~\ref{main result1 for (r,r,p)},~\ref{main result2 for (r,r,p)}.
	\begin{itemize}
		\item In Theorem~\ref{main result1 for (r,r,p)}, we prove that for a certain class of totally real fields $K$, there exists a constant $V>0$ (depending on $K,r,d$) such that for primes $p>V$, the equation $x^r+y^r=dz^p$ has no non-trivial primitive solution $(a,b,c) \in \mcO_{K^+}^3$ with $\mfP |c$ for some prime ideal $\mfP$ of $K^+$ lying above $2$.
		In particular, in Corollary~\ref{cor to main result1} (respectively Corollary~\ref{cor2 to main result1}), we show that the equation $x^r+y^r=dz^p$ with $p>V$ has no non-trivial  primitive solution $(a,b,c) \in \mcO_K^3$ with $2 |c$ (respectively $2 |a+b$).
		
		\item In Theorem~\ref{main result2 for (r,r,p)}, we study the asymptotic solution of the equation $x^5+y^5=dz^p$ over $K$ and show that there exists a constant $V>0$ (depending on $K,d$) such that for primes $p>V$, the equation $x^5+y^5=dz^p$ has no solution in $W_K$ (cf. Definition~\ref{defn for W_K} for $W_K$).
		In particular, in Corollary~\ref{cor to main result2}, we show that the equation $x^5+y^5=dz^p$ with $p>V$ has no non-trivial primitive solution $(a,b,c) \in \mcO_{K}^3$ with $c\neq \pm1$ and $2 \nmid c$.
	\end{itemize}
	To prove Theorems~\ref{main result1 for (r,r,p)},~\ref{main result2 for (r,r,p)}, we use the modular method inspired by Freitas and Siksek in \cite{FS15}. 
	We recall the broad general strategy and the key steps in the modular method:  
	\begin{steps}
		\item Given any non-trivial primitive solution $(a, b, c)\in \mcO_{K}^3$ to a Fermat-type equation of exponent $p$, associate a suitable Frey elliptic curve $E/K$.
		\item Then we prove the modularity and semistability of $E/K$ for $p \gg 0$, and the mod $p$ Galois representation $\bar{\rho}_{E,p}$ is irreducible for $p \gg 0$. Using level lowering results,  find $\bar{\rho}_{E,p} \sim \bar{\rho}_{f,p}$, for some Hilbert modular newform $f$ over $K$ of parallel weight $2$ with rational eigenvalues, so that the level $N_p$ of $f$ strictly divides the conductor of $E$.
		\item Prove that the finitely many Hilbert modular newforms that occur in the above step do not correspond to $\bar{\rho}_{E,p}$, to get a contradiction.
	\end{steps}
	{\bf Methodology:}  We now discuss some key steps in the proof of our main results, i.e., Theorem~\ref{main result1 for (r,r,p)} and Theorem~\ref{main result2 for (r,r,p)}. 
	\begin{itemize}
		\item 	The Frey curves corresponding to the equations $x^r+y^r=dz^p$ and $x^5+y^5=dz^p$ are attached by Freitas in \cite{F15} over $K^+$ and Billerey in \cite{B07} over $K$, respectively (cf. \eqref{Frey curve result1}, \eqref{Frey curve result2} for the Frey curves).
		
		\item We prove modularity and semistability of the Frey curve $E$ in \eqref{Frey curve result1} over $K^+$ (respectively $E$ in \eqref{Frey curve result2} over $K$) with $p \gg0$.
		
		\item  Using the results of \cite{FS15}, we get $\bar{\rho}_{E,p} \sim \bar{\rho}_{f,p}$, for some Hilbert modular newform $f$ of lower level, as specified in Step $2$ above.
		
		\item To get a contradiction, as outlined in Step $3$, Theorem~\ref{main result1 for (r,r,p)} uses technical ideas from \cite{FS15}, \cite{M23} which involves the $S_{K^+,2rd}$-unit equation \eqref{S_K-unit solution}, while Theorem~\ref{main result2 for (r,r,p)} uses technical ideas from \cite{M22} which involves the equation~\eqref{eqn for main result2}.
		
		\item More precisely, in Step $3$, for any prime $\mfP \in S_{K^+,2}$ (respectively $\mfP \in S_{K,2}$) and for the Frey curve $E/K^+$ in \eqref{Frey curve result1} (respectively $E/K$ in \eqref{Frey curve result2}) with $p \gg 0$, we show that $v_\mfP(j_E) < 0$ and $p \nmid v_\mfP(j_E)$ (cf. Lemmas~\ref{reduction on T and S},~\ref{reduction on T and S main result2}). Using a level-lowering result of~\cite{FS15}, we show that there exists a constant $V>0$ (depending on $K,r,d$) and an elliptic curve $E'/K^+$ having full $2$-torsion points (respectively $E'/K$ having a non-trivial $2$-torsion point) such that $\bar{\rho}_{E,p} \sim \bar{\rho}_{E^\prime,p}$ for $p>V$ (cf. Propositions~\ref{auxilary result for main result1},~\ref{auxilary result for main result2}). Then, using Lemma~\ref{criteria for potentially multiplicative reduction}, we get $v_\mfP(j_{E^\prime})<0$. 
		Finally, we relate $j_{E'}$ in terms of the solution of the $S_{K^+, 2rd}$-unit equation~\eqref{S_K-unit solution} (respectively equation~\eqref{eqn for main result2}), together with the explicit bound \eqref{assumption for main result1} (respectively \eqref{assumption for main result2}) to get $v_\mfP(j_{E^\prime}) \geq0$ for some $\mfP \in S_{K^+,2}$ (respectively $\mfP \in S_{K,2}$), which is a contradiction.
		In particular, we stress that the methods used to prove Theorem~\ref{main result1 for (r,r,p)} and Theorem~\ref{main result2 for (r,r,p)} are different.
	\end{itemize}

	

	\subsection{Structure of the article}
	In \S\ref{notations section for x^r+y^r=dz^p}, we state the main results of this article, i.e., Theorems~\ref{main result1 for (r,r,p)},~\ref{main result2 for (r,r,p)}. In \S\ref{steps to prove main result1} (respectively \S\ref{steps to prove main result2}) we prove Theorem~\ref{main result1 for (r,r,p)} (respectively  Theorem~\ref{main result2 for (r,r,p)}). In \S\ref{section for local criteria}, we give various local criteria on $K$ such that Theorem~\ref{main result1 for (r,r,p)} holds over $K^+$.

	\section{Statement of the main results}
	\label{notations section for x^r+y^r=dz^p}
	Let $K$ denote a totally real number field, and $\mcO_K$ denote the ring of integers of $K$. In this section, we study the solutions of the Diophantine equation
	\begin{equation}
		\label{r,r,p over K}
		x^r+y^r=dz^p
	\end{equation} 
	over $K$, where $r,p\geq 5$ rational primes, and $d\in \mcO_K \setminus \{0\}$.
	\begin{dfn}
		A solution $(a, b, c)\in \mcO_K^3$ to the equation \eqref{r,r,p over K} is said to be trivial if $abc=0$,
		otherwise it is non-trivial.
		We say $(a, b, c)\in \mcO_K^3$ is primitive if $a\mcO_K+b\mcO_K+c\mcO_K=\mcO_K$.
	\end{dfn}  

	

	\subsection{Main result for the solutions of $x^r+y^r=dz^p$ over $K$}
	\label{section for main result of (r,r,p)}
	Throughout this subsection, we assume $d$ is odd (in the sense that $\mfP \nmid d$ for all prime ideals $\mfP$ of $K$ lying above $2$). For any number field $F$, let $P_F$ denote the set of all prime ideals of $\mcO_F$. 
	For any set $S \subseteq P_F$, let $\mcO_{S}:=\{\alpha \in F : v_\mfq(\alpha)\geq 0 \text{ for all } \mfq \in P_F \setminus S\}$ be the ring of $S$-integers in $F$ and $\mcO_{S}^*:=\{\alpha \in F : v_\mfq(\alpha)= 0 \text{ for all } \mfq \in P_F \setminus S\}$ be the $S$-units of $\mcO_{S}$.   
	
	For any number field $F$ and any element $\alpha \in \mcO_F \setminus \{0\}$, let 
	$$S_{F,\alpha}:=\{\mfp: \mfp\in P_F \text{ with } \mfp |\alpha\}.$$
	Let $L := K(\zeta_r)$. Let $K^+:=K(\zeta_r+ \zeta_r^{-1})$ be the maximal totally real subfield of $L$. 
	We now state the first main result of this article.
	\begin{thm}[Main result 1]
		\label{main result1 for (r,r,p)}
		Let $K$ be a totally real number field, $r\geq 5$ be a fixed rational prime and $d\in \mcO_K \setminus \{0\}$ be odd. Let $K^+:=K(\zeta_r+ \zeta_r^{-1})$. Suppose, for every solution $(\lambda, \mu)$ to the $S_{K^+, 2rd}$-unit equation
		\begin{equation}
			\label{S_K-unit solution}
			\lambda+\mu=1, \ \lambda, \mu \in \mcO_{S_{K^+, 2rd}}^\ast,
		\end{equation}
		there exists a prime $\mfP \in S_{{K^+}, 2}$ that satisfies
		\begin{equation}
			\label{assumption for main result1}
			\max \left\{|v_\mfP(\lambda)|,|v_\mfP(\mu)| \right\}\leq 4v_\mfP(2).
		\end{equation}
		Then, there exists a constant $V=V_{K,r,d}>0$ (depending on $K,r,d$) such that for primes $p>V$, the equation $x^r+y^r=dz^p$ has no non-trivial primitive solution $(a,b,c) \in \mcO_{K^+}^3$ with $\mfP |c$.
	\end{thm}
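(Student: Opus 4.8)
The plan is to argue by contradiction, running the three-step modular method recalled in the introduction. Suppose $p>V$, where $V=V_{K,r,d}$ will be the maximum of the finitely many lower bounds on $p$ appearing below, and suppose $(a,b,c)\in\mcO_{K^+}^3$ is a non-trivial primitive solution of $x^r+y^r=dz^p$ with $\mfP\mid c$ for some prime $\mfP$ of $K^+$ above $2$. To this solution I would attach the Freitas Frey curve $E=E_{a,b,c}/K^+$ of \eqref{Frey curve result1}.

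First I would work out the local behaviour of $E$. By Theorem~\ref{semi stable red of Frey curve}, $E$ is semi-stable outside $S_{K^+,2rd}$, so its conductor is supported on $S_{K^+,2rd}\cup\{\mfq\mid p\}$, $E$ is semi-stable at every $\mfq\mid p$, and primitivity of $(a,b,c)$ forces $p\mid v_\mfq(\Delta_E)$ at every prime $\mfq\notin S_{K^+,2rd}$ of multiplicative reduction. The key input is Lemma~\ref{reduction on T and S}: for $p$ large and for every $\mfP\in S_{K^+,2}$ one has $v_\mfP(j_E)<0$ and $p\nmid v_\mfP(j_E)$ --- here the hypothesis $\mfP\mid c$ on the solution is used --- so $E$ has potentially multiplicative reduction at every prime of $K^+$ above $2$. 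Next I would invoke Theorem~\ref{modularity result for main result1} to get that $E/K^+$ is modular for $p$ large (in terms of $K,r$). Together with a uniform irreducibility statement for $\bar{\rho}_{E,p}$ and the level-lowering theorem of \cite{FS15} --- whose hypotheses are met by the reduction analysis just described --- this yields $\bar{\rho}_{E,p}\sim\bar{\rho}_{f,p}$ for a Hilbert modular newform $f$ over $K^+$ of parallel weight $2$ whose level is supported on $S_{K^+,2rd}$. Only finitely many such $f$ occur, and for those whose Hecke eigenvalue field strictly contains $\Q$ the usual comparison of a suitable eigenvalue modulo a prime above $p$ with the Hasse--Weil bound rules out $\bar{\rho}_{E,p}\sim\bar{\rho}_{f,p}$ for $p$ large; so I may assume $f$ has rational eigenvalues.

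For the final step I would apply Proposition~\ref{auxilary result for main result1} to pass from the rational newform $f$ to an elliptic curve $E'/K^+$ with full $2$-torsion and good reduction outside $S:=S_{K^+,2rd}$, with $\bar{\rho}_{E,p}\sim\bar{\rho}_{E',p}$ for $p>V$. Since $v_\mfP(j_E)<0$ and $p\nmid v_\mfP(j_E)$ for every $\mfP\in S_{K^+,2}$, Lemma~\ref{criteria for potentially multiplicative reduction} forces $v_\mfP(j_{E'})<0$ for all such $\mfP$. On the other hand, an elliptic curve over $K^+$ with full $2$-torsion and good reduction outside $S$ (note $2\in S$) has a Legendre model $y^2=x(x-1)(x-\lambda)$ with $\lambda,\ \mu:=1-\lambda\in\mcO_S^\ast$, because the differences of its $2$-torsion points are $S$-units; hence $(\lambda,\mu)$ solves the $S_{K^+,2rd}$-unit equation \eqref{S_K-unit solution} and
\[
j_{E'}=2^8\,\frac{(\lambda^2-\lambda+1)^3}{\lambda^2\mu^2}.
\]
Now take the prime $\mfP\in S_{K^+,2}$ provided by hypothesis \eqref{assumption for main result1}, so $\max\{|v_\mfP(\lambda)|,|v_\mfP(\mu)|\}\leq 4v_\mfP(2)$. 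Using $\lambda+\mu=1$ --- which gives $v_\mfP(\lambda^2-\lambda+1)\geq\min\{0,2v_\mfP(\lambda)\}$ and $v_\mfP(\mu)=v_\mfP(\lambda)$ whenever $v_\mfP(\lambda)<0$ --- a short case analysis on the sign of $v_\mfP(\lambda)$ gives
\[
v_\mfP(j_{E'})\;\geq\;8v_\mfP(2)-2\max\{|v_\mfP(\lambda)|,|v_\mfP(\mu)|\}\;\geq\;0,
\]
which contradicts $v_\mfP(j_{E'})<0$. This would finish the proof.

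I expect the last step to be the hard part. The delicate point is not the $j$-invariant computation but the passage from the surviving Hilbert newform to an elliptic curve $E'/K^+$ with \emph{full} rational $2$-torsion and bad reduction confined to $S_{K^+,2rd}$: this requires exploiting the full $2$-torsion of the Frey curve $E$ together with a careful analysis of the conductor away from $p$ (Proposition~\ref{auxilary result for main result1}), and it is exactly this step that connects $E'$ to the $S_{K^+,2rd}$-unit equation \eqref{S_K-unit solution}. One should also observe that the constant $4$ in \eqref{assumption for main result1} is precisely the threshold that makes $v_\mfP$ of the Legendre $j$-invariant nonnegative, so hypothesis \eqref{assumption for main result1} is exactly what is needed to close the argument. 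The deep modularity and irreducibility inputs are supplied by Theorem~\ref{modularity result for main result1} and standard techniques; relative to the $d=1$ case of \cite{M23}, the new primes dividing $d$ enlarge \eqref{S_K-unit solution} but do not change the overall structure.
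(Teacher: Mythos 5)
Your proposal follows the same three-step modular method as the paper, and the logical structure is correct: semi-stability via Theorem~\ref{semi stable red of Frey curve}, modularity via Theorem~\ref{modularity result for main result1}, level lowering via \cite{FS15}, then Proposition~\ref{auxilary result for main result1} to produce $E'/K^+$ with full $2$-torsion, the Legendre model linking $j_{E'}$ to the $S_{K^+,2rd}$-unit equation, and the valuation estimate $v_\mfP(j_{E'})\ge 8v_\mfP(2)-2\max\{|v_\mfP(\lambda)|,|v_\mfP(\mu)|\}\ge 0$ contradicting $v_\mfP(j_{E'})<0$. One inaccuracy worth fixing: you assert that Lemma~\ref{reduction on T and S} gives $v_\mfP(j_E)<0$ and $p\nmid v_\mfP(j_E)$ \emph{for every} $\mfP\in S_{K^+,2}$, hence potential multiplicative reduction at every prime above $2$; that is false in general. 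The lemma's hypothesis requires $\mfP\mid c$, and since $d$ is odd the divisibility $dc^p=(a+b)\prod f_k(a,b)$ forces $\mfP\nmid ABC$ whenever $\mfP\mid 2$ but $\mfP\nmid c$, so $v_\mfP(j_E)=8v_\mfP(2)>0$ at those primes. Your argument still closes because the prime $\mfP$ singled out in hypothesis~\eqref{assumption for main result1} is the same $\mfP$ appearing in the conclusion ``$\mfP\mid c$'' (this is how the paper's proof reads the theorem, and how it applies~\eqref{assumption for main result1}); you should make this identification explicit instead of appealing to a blanket negativity of $v_\mfP(j_E)$ across all primes above $2$.
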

	\begin{remark}
		For any number field $F$ and any finite set $S \subseteq P_F$, the $S$-unit equation $\lambda+\mu=1, \ \lambda, \mu \in \mcO_{S}^\ast$ has only a finite number of solutions (cf. \cite{S14} for more details). Consequently, the $S_{K^+,2rd}$-unit equation~\eqref{S_K-unit solution} also has only a finite number of solutions in $K^+$. Moreover, the solutions of the $S$-unit equation are effectively computable (cf. \cite{AKMRVW21}).
		Thus, in Theorem~\ref{main result1 for (r,r,p)}, we reduce a Diophantine problem to a computable problem. A similar bound on $S_{K^+,2rd}$-unit equation~\eqref{S_K-unit solution} can also be found in \cite[Theorem 3]{FS15} (respectively \cite[Theorem 4]{M23}) but they have considered  $S_{K,2}$-unit equation (respectively $S_{K^+,2r}$-unit equation) instead of $S_{K^+,2rd}$-unit equation.
	\end{remark}
	In \S\ref{section for local criteria}, we give several classes of fields $K$ for which Theorem~\ref{main result1 for (r,r,p)} holds over $K^+$. As an immediate corollary to Theorem~\ref{main result1 for (r,r,p)}, we study the solutions of the equation~\eqref{r,r,p over K} over $K$.
	\begin{cor}
		\label{cor to main result1}
		Let $K, \mfP, r,d$ satisfy the hypothesis of Theorem~\ref{main result1 for (r,r,p)}. Let $\mfQ$ be the unique prime of $K$ lying below $\mfP$. Then, there exists a constant $V=V_{K,r,d}>0$ (depending on $K,r,d$) such that for primes $p>V$, the equation $x^r+y^r=dz^p$ has no non-trivial primitive solution $(a,b,c) \in \mcO_K^3$ with $\mfQ |c$. In particular, the equation $x^r+y^r=dz^p$ with $p>V$ has no non-trivial primitive solution $(a,b,c) \in \mcO_K^3$ with $2 |c$.
	\end{cor}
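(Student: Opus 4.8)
The plan is to deduce Corollary~\ref{cor to main result1} directly from Theorem~\ref{main result1 for (r,r,p)} by base change from $K$ to $K^+$. Since $K^+ = K(\zeta_r + \zeta_r^{-1}) \supseteq K$, any triple $(a,b,c) \in \mcO_K^3$ satisfying $a^r + b^r = dc^p$ is in particular a triple in $\mcO_{K^+}^3$ satisfying the same equation. So it suffices to check that a non-trivial primitive solution over $K$ with $\mfQ \mid c$ remains non-trivial and primitive when viewed over $K^+$, and that the distinguished prime $\mfP$ of $K^+$ appearing in the hypothesis of Theorem~\ref{main result1 for (r,r,p)} still divides $c$; then that theorem applies and yields the non-existence claim.

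First I would record the three verifications. Non-triviality is immediate: $abc \neq 0$ in $\mcO_K$ forces $abc \neq 0$ in $\mcO_{K^+}$. For primitivity, the hypothesis $a\mcO_K + b\mcO_K + c\mcO_K = \mcO_K$ gives a B\'ezout identity $1 = \alpha a + \beta b + \gamma c$ with $\alpha, \beta, \gamma \in \mcO_K \subseteq \mcO_{K^+}$, whence $a\mcO_{K^+} + b\mcO_{K^+} + c\mcO_{K^+} = \mcO_{K^+}$, so the solution is primitive over $K^+$ as well. Finally, since $\mfP$ is a prime of $K^+$ lying above the prime $\mfQ$ of $K$, we have $\mfP \mid \mfQ \mcO_{K^+}$, and from $\mfQ \mid c$ in $\mcO_K$ we conclude $\mfP \mid \mfQ\mcO_{K^+} \mid c\mcO_{K^+}$, i.e.\ $\mfP \mid c$.

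With these in hand, Theorem~\ref{main result1 for (r,r,p)} applies verbatim: under the stated hypothesis on the solutions of the $S_{K^+, 2rd}$-unit equation~\eqref{S_K-unit solution}, there is a constant $V = V_{K,r,d} > 0$ such that for $p > V$ the equation $x^r + y^r = dz^p$ has no non-trivial primitive solution $(a,b,c) \in \mcO_{K^+}^3$ with $\mfP \mid c$; by the reductions above this rules out any such solution over $\mcO_K$ with $\mfQ \mid c$, with the same $V$. For the final assertion, if $2 \mid c$ then every prime of $\mcO_K$ above $2$ divides $c$, in particular $\mfQ$ does, so this case is subsumed by the previous one.

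There is essentially no serious obstacle here, as the corollary is a formal consequence of the theorem. The only point requiring (minimal) care is that coprimality of $a,b,c$ does not deteriorate under the finite extension $K \subseteq K^+$; this is exactly the B\'ezout-identity argument above, which shows the generated ideal remains the unit ideal after extension of scalars. One should also keep in mind that $\mfQ$ is well defined precisely because there is a \emph{unique} prime of $K$ below $\mfP$, as asserted in the statement, so the hypothesis $\mfQ \mid c$ is unambiguous.
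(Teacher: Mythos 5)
Your proof is correct and is exactly the (unwritten) argument the paper has in mind: the paper labels this an ``immediate corollary'' and leaves it to the reader, and the intended deduction is precisely the base change from $K$ to $K^+$ that you carry out, checking that non-triviality, primitivity, and divisibility of $c$ by the distinguished prime all persist. The only point worth a half-sentence of extra care is that $\mfQ$ lies above $2$ (being below $\mfP \in S_{K^+,2}$), which is why $2 \mid c$ in $\mcO_K$ forces $\mfQ \mid c$; you state this correctly.
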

	
	The following corollary describes the asymptotic solution of the equation~\eqref{r,r,p over K} with $2 |a+b$. 
	\begin{cor}
		\label{cor2 to main result1}
		Let $K, \mfP,r, d$ satisfy the hypothesis of Theorem~\ref{main result1 for (r,r,p)}. Then, there exists a constant $V=V_{K,r,d}>0$ (depending on $K,r,d$) such that for primes $p>V$, the equation $x^r+y^r=dz^p$ has no non-trivial primitive solution $(a,b,c) \in \mcO_K^3$ with $2 |a+b$.
	\end{cor}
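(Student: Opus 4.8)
The plan is to deduce Corollary~\ref{cor2 to main result1} from Corollary~\ref{cor to main result1} by a symmetry trick that converts a primitive solution with $2\mid a+b$ into a primitive solution with $2\mid c$ for a \emph{closely related} Fermat equation of the same signature $(r,r,p)$. Concretely, suppose $(a,b,c)\in\mcO_K^3$ is a non-trivial primitive solution of $x^r+y^r=dz^p$ with $2\mid a+b$. Since $r$ is odd, write $s:=a+b$ and $t:=a-b$ (or, more intrinsically, use the factorization $a^r+b^r=(a+b)\,\phi_r(a,b)$ where $\phi_r$ is the homogeneous degree $r-1$ factor). The hypothesis $2\mid a+b$ together with primitivity of $(a,b,c)$ forces $2\nmid c$ and also $2\nmid ab$, hence $2\nmid t=a-b$. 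The idea is to exhibit, after this substitution, a new equation to which Corollary~\ref{cor to main result1} (with a possibly modified $d$) applies and in which the role of the ``$z$-variable'' is now played by an expression divisible by $2$.

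First I would make the substitution precise. Setting $u=a+b$, $v=a-b$, one has $2a=u+v$, $2b=u-v$, so $2^r(a^r+b^r)=(u+v)^r+(u-v)^r=2\sum_{j\ \mathrm{even}}\binom{r}{j}u^{r-j}v^{j}$. Thus $2^{r-1}\,d\,c^p=\sum_{j\ \mathrm{even}}\binom{r}{j}u^{r-j}v^{j}=u\cdot g(u,v)$ where $g(u,v)=\sum_{j\ \mathrm{even}}\binom{r}{j}u^{r-j-1}v^{j}$ and $g(0,v)=\binom{r}{r-1}v^{r-1}=r v^{r-1}$. I would then argue that $(v, u, \,\ast)$ — with $\ast$ built from $c$ — is a non-trivial primitive solution of an equation of the shape $x^r+y^r=d'z^p$ over $K$, with $d'$ differing from $d$ only by a power of $2$ and by units, \emph{and} with the new $z$-variable divisible by $\mfQ\mid 2$ precisely because $2\mid u=a+b$. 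Applying Corollary~\ref{cor to main result1} to this new equation (whose constant $V_{K,r,d'}$ can be absorbed into a single $V_{K,r,d}$, enlarging if necessary) yields the contradiction for $p>V$.

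The honest alternative, and the one I would present if the substitution above does not cleanly land on an equation of signature $(r,r,p)$ with an essentially unchanged $d$, is to \emph{re-run the proof of Theorem~\ref{main result1 for (r,r,p)} directly} with the Frey curve in~\eqref{Frey curve result1} attached to the solution $(a,b,c)$ in the case $2\mid a+b$ rather than $2\mid c$. The entire machinery of \S\ref{steps to prove main result1} — modularity (Theorem~\ref{modularity result for main result1}), semi-stability away from $S_{K^+,2rd}$ (Theorem~\ref{semi stable red of Frey curve}), irreducibility of $\bar\rho_{E,p}$ and level lowering, the $j$-invariant computation $v_{\mfP}(j_E)<0$, $p\nmid v_{\mfP}(j_E)$ at $\mfP\mid 2$ (Lemma~\ref{reduction on T and S}), and the identification with an elliptic curve $E'/K^+$ with full $2$-torsion (Proposition~\ref{auxilary result for main result1}) — depends only on the $S_{K^+,2rd}$-unit structure governing $j_{E'}$, not on which coordinate carries the $2$-adic valuation; one checks that $2\mid a+b$ forces the same valuation inequality at some $\mfP\mid 2$ that drives the final contradiction against~\eqref{assumption for main result1}. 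I would note that $2\mid a+b$ and $2\mid c$ are exhaustive for primitive solutions (both cannot fail simultaneously since then $2\nmid a+b$, $2\nmid c$ would put us in the complementary regime, but primitivity and $2\mid d$ being excluded means one of the two alternatives — possibly after permuting $a\leftrightarrow -b$ — holds), so Corollary~\ref{cor2 to main result1} together with Corollary~\ref{cor to main result1} in fact covers \emph{all} primitive solutions with $2\mid abc$ or $2\mid a+b$.

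The main obstacle I anticipate is bookkeeping the constant $d$ under the substitution $2\mid a+b$: the naive change of variables introduces a factor $2^{r-1}$ and potentially a factor $r$, so one must verify that the hypothesis~\eqref{assumption for main result1} on the $S_{K^+,2rd}$-unit equation is \emph{stable} under replacing $d$ by $2^{r-1}d$ (or $rd$) — which it is, since $S_{K^+,2rd}=S_{K^+,2^{r-1}rd}$ as sets of primes, so the unit equation and the set $S_{K^+,2}$ are literally unchanged. Once that observation is in place, Corollary~\ref{cor2 to main result1} follows formally from Corollary~\ref{cor to main result1} applied to the transformed solution, with $V_{K,r,d}$ for the $2\mid a+b$ statement taken to be the maximum of the relevant constants. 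The only genuinely delicate point is checking that the transformed triple is still \emph{primitive}: a common prime divisor of the new coordinates would have to divide $u=a+b$, $v=a-b$ and the $c$-term, forcing (since $2\nmid v$ in this case) a common odd prime divisor of $a+b$, $a-b$, hence of $2a$ and $2b$, hence of $a$ and $b$ — contradicting primitivity of $(a,b,c)$; I would spell out this coprimality argument carefully, as it is the one place where the parity hypothesis $2\mid a+b$ is essential.
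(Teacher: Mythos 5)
Your proposal over-engineers what is a one-line reduction, and the very first deduction you make in setting it up is false. You assert that ``$2\mid a+b$ together with primitivity of $(a,b,c)$ forces $2\nmid c$ and also $2\nmid ab$, hence $2\nmid t=a-b$.'' Only the middle claim is right. From $2\mid a+b$ and $2\nmid ab$ you get that $a$ and $b$ are both odd, whence $a-b$ is \emph{even}, not odd; and more importantly, since $r$ is odd, $a^r+b^r=(a+b)\,\phi_r(a,b)=dc^p$ with $2\mid a+b$ and (by the standing assumption of \S\ref{section for main result of (r,r,p)}) $2\nmid d$ immediately forces $2\mid c$ --- the opposite of what you claim. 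You have treated $2\mid a+b$ and $2\mid c$ as two disjoint regimes requiring a change of variables to pass between, when in fact the first \emph{implies} the second. There is therefore nothing to transform: a non-trivial primitive solution with $2\mid a+b$ is already a non-trivial primitive solution with $2\mid c$, and the statement follows by directly invoking Corollary~\ref{cor to main result1} with the same constant $V$. This is exactly what the paper does.

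Your fallback suggestion --- re-running the whole modular machinery of \S\ref{steps to prove main result1} for the case $2\mid a+b$ --- would also work in principle (note $f_0(a,b)=(a+b)^2$ is among the factors, so $\mfP\mid a+b$ lets you choose $k_1=0$ and the argument in Lemma~\ref{reduction on T and S} goes through), but it is wholly unnecessary here. The concrete gap in your primary route is the sign error on the implication $2\mid a+b\Rightarrow 2\mid c$: correcting it collapses your entire substitution-and-primitivity discussion into the paper's two-sentence proof.
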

	
	\begin{proof}
		Suppose $(a,b,c) \in \mcO_K^3$ is a non-trivial primitive solution to~\eqref{r,r,p over K} of exponent $p>V$ with $2 |a+b$, where $V$ is the same constant as in Corollary~\ref{cor to main result1}. This gives $dc^p=a^r+b^r=  (a+b) \times \sum_{i=0}^{r-1}a^{r-1-i}b^i$. Since $2 |a+b$ and $2 \nmid d$, it follows that $2 |c$. This contradicts Corollary~\ref{cor to main result1}. 
	\end{proof}
	
	\subsection{Main result for the solutions of $x^5+y^5=dz^p$ over $K$}
	In this subsection, we study the solutions of the Diophantine equation
	\begin{equation}
		\label{5,5,p}
		x^5+y^5=dz^p
	\end{equation} 
	over $K$, where $p\geq 5$ is a rational prime and $d \in \mcO_K\setminus \{0\}$ is $5$-th power free (in the sense that there doesn't exist any prime $\mfp \in P_K$ such that $\mfp^5 |d$). Throughout this subsection, we assume $d$ is even (in the sense that $\mfP |d$ for some prime  $\mfP \in S_{K,2}$).
	\begin{remark}
		\label{coprime of a,b,c}
		If $(a, b, c)\in \mcO_K^3$ is a non-trivial primitive solution of the equation~\eqref{5,5,p}, then $a,b,c$ are pairwise coprime since $d$ is $5$-th power free.
	\end{remark}
	\begin{dfn}
		\label{defn for W_K}
		Let $W_K$ be the set of all non-trivial primitive solutions $(a,b,c) \in \mcO_{K}^3$ to the equation~\eqref{5,5,p} such that $c\neq \pm1$ and $\mfP \nmid c$ for all $\mfP \in S_{K,2}$.
	\end{dfn}
	For any set $S \subseteq P_K$ and $m \in \N$, let $\Cl_S(K):= \mrm{Cl}(K)/\langle [\mfP]\rangle_{\mfP\in S}$ and $\Cl_S(K)[m]$ be the $m$-torsion points of $\Cl_S(K)$. We now state the second main result of this article.
	\begin{thm}[Main result 2]
		\label{main result2 for (r,r,p)}
		\label{main result2 for r,r,p}
		Let $K$ be a totally real number field and $d\in \mcO_K \setminus \{0\}$. Let $\Cl_{S_{K,10d}}(K)[2]=\{1\}$. Suppose for every solution $(\alpha, \beta, \gamma) \in \mcO_{S_{K,10d}}^\ast \times \mcO_{S_{K,10d}}^\ast \times \mcO_{S_{K,10d}}$ to the equation
		\begin{equation}
			\label{eqn for main result2} 
			\alpha+\beta=\gamma^2,
		\end{equation}
		there exists a prime $\mfP \in S_{K, 2}$ that satisfies 
		\begin{equation}
			\label{assumption for main result2}
			\left| v_\mfP \left(\alpha \beta^{-1}\right) \right| \leq 6v_\mfP(2).
		\end{equation}
		If $v_\mfP(d)> 4v_\mfP(2)$, then there exists a constant $V=V_{K,d}>0$ (depending on $K,d$) such that for primes $p>V$, the equation $x^5+y^5=dz^p$ has no solution in $W_K$.
	\end{thm}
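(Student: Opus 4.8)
The plan is to follow the three-step modular method outlined in the introduction, now specialized to the Frey curve $E/K$ attached in \eqref{Frey curve result2} to a putative solution $(a,b,c)\in W_K$ of $x^5+y^5=dz^p$. First I would take $(a,b,c)\in W_K$, so $a,b,c$ are pairwise coprime (Remark~\ref{coprime of a,b,c}), $c\neq\pm1$, and $\mfP\nmid c$ for every $\mfP\in S_{K,2}$; in particular, since $d$ is even, some $\mfP\in S_{K,2}$ with $v_\mfP(d)>4v_\mfP(2)$ divides $a^5+b^5=dc^p$ only through $d$. I would then invoke the modularity theorem (Theorem~\ref{modularity result for main result2}) to get that $E/K$ is modular for $p\gg 0$, the semi-stability statement (Theorem~\ref{reduction away from S_{K,10d}}) to control the conductor away from $S_{K,10d}$ and to guarantee semi-stable reduction at all $\mfq\mid p$, and the irreducibility of $\bar\rho_{E,p}$ for $p\gg 0$. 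Level lowering (as in \cite{FS15}) then yields $\bar\rho_{E,p}\sim\bar\rho_{f,p}$ for a Hilbert newform $f$ over $K$ of parallel weight $2$ with rational Hecke eigenvalues and level $N_p$ dividing the conductor away from $p$; the hypothesis $\Cl_{S_{K,10d}}(K)[2]=\{1\}$ is what forces $N_p$ to land in a fixed finite set independent of $p$ and, combined with the eigenvalue field being $\Q$, lets me replace $f$ by an elliptic curve $E'/K$ carrying a nontrivial $2$-torsion point with $\bar\rho_{E,p}\sim\bar\rho_{E',p}$ for $p>V$ (this is Proposition~\ref{auxilary result for main result2}).

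Next I would carry out the valuation analysis at a prime $\mfP\in S_{K,2}$. Using Lemma~\ref{reduction on T and S main result2} I get $v_\mfP(j_E)<0$ and $p\nmid v_\mfP(j_E)$; since $\bar\rho_{E,p}\sim\bar\rho_{E',p}$ and $p$ is large, Lemma~\ref{criteria for potentially multiplicative reduction} transfers potential multiplicative reduction, giving $v_\mfP(j_{E'})<0$ as well. The crux is then to express $j_{E'}$, or rather the relevant $\lambda$-invariant coming from its $2$-torsion point, in terms of a solution $(\alpha,\beta,\gamma)$ of the equation $\alpha+\beta=\gamma^2$ in \eqref{eqn for main result2} with $\alpha,\beta\in\mcO_{S_{K,10d}}^\ast$ and $\gamma\in\mcO_{S_{K,10d}}$. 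Concretely, the $2$-torsion structure of the Frey curve for signature $(5,5,p)$ produces quantities built from $a+b$, $\phi_5(a,b)=(a^5+b^5)/(a+b)$, and a square coming from the factorization of $\phi_5$ over $K$ (the usual $5$-th cyclotomic identity $4\phi_5(a,b)=w^2-5(a-b)^2\cdot(\text{something})$-type relation, or the $\Q(\sqrt5)$-descent), and this is exactly what yields $\gamma^2$. The hypothesis $v_\mfP(d)>4v_\mfP(2)$ is designed so that $\alpha\beta^{-1}$ picks up the full contribution of $d$ at $\mfP$, making $|v_\mfP(\alpha\beta^{-1})|>6v_\mfP(2)$.

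Finally I would derive the contradiction: by the theorem's hypothesis, the fixed finite solution set of \eqref{eqn for main result2} has, for every solution, some $\mfP\in S_{K,2}$ with $|v_\mfP(\alpha\beta^{-1})|\leq 6v_\mfP(2)$; but the $j$-invariant relation forces $v_\mfP(j_{E'})\geq 0$ at precisely such a $\mfP$ (the bound $6v_\mfP(2)$ matching the denominator bookkeeping in $j_{E'}=\alpha\beta^{-1}/(\text{stuff})$ up to powers of $2$), contradicting $v_\mfP(j_{E'})<0$. Enlarging $V$ if necessary to absorb the finitely many "bad" solutions coming from $d$'s contribution completes the argument. The main obstacle I anticipate is the middle step: correctly identifying the arithmetic of the $2$-torsion of the degree-$(5,5,p)$ Frey curve over $K$ (as opposed to the classical curve over $K^+$ used in \cite{M23}), writing $j_{E'}$ in a normalized form whose numerator and denominator are exactly $S_{K,10d}$-units up to controlled powers of $\mfP\in S_{K,2}$, and matching the constant $6v_\mfP(2)$ in \eqref{assumption for main result2} to that normalization — all the modularity and level-lowering inputs are quoted black boxes, but this descent computation is where the signature-$(5,5,p)$ structure genuinely differs from Mocanu's setup and must be done by hand.
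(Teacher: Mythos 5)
Your outline of the modular-method framework (Frey curve $\eqref{Frey curve result2}$, modularity via Theorem~\ref{modularity result for main result2}, semi-stability via Lemma~\ref{reduction away from S_{K,10d}}, irreducibility, level lowering, and then descending to an elliptic curve $E'/K$ with a nontrivial $2$-torsion point) matches the paper, but your account of the final descent step, which you correctly flag as the crux, is wrong in two substantive ways.

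First, the hypothesis $\Cl_{S_{K,10d}}(K)[2]=\{1\}$ has nothing to do with forcing the lowered level $N_p$ into a finite set; that is already guaranteed by the conductor bound in Lemma~\ref{reduction away from S_{K,10d}}. Its actual role comes only at the very end, applied to $E'$. The solution $(\alpha,\beta,\gamma)$ of $\alpha+\beta=\gamma^2$ is not produced from the arithmetic of the Frey curve $E$ or from any identity involving $\phi_5(a,b)$ as you propose. Instead, one writes $E'$ (which has a rational $2$-torsion point) as $y^2=x^3+a'x^2+b'x$, sets $\lambda:=a'^2/b'$ and $\mu:=\lambda-4$, and observes (via the lemmas from \cite{M22}) that $\lambda,\mu\in\mcO_{S_{K,10d}}^\ast$ and $\lambda\mcO_K=I^2J$ with $J$ an $S_{K,10d}$-ideal. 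The class-group hypothesis is precisely what lets you write $I=\gamma I'$ and so $u:=\lambda/\gamma^2\in\mcO_{S_{K,10d}}^\ast$; dividing $\mu+4=\lambda$ by $u$ gives $\alpha+\beta=\gamma^2$ with $\alpha=\mu/u$, $\beta=4/u$, and $\alpha\beta^{-1}=\mu/4$. Your proposed route through a cyclotomic identity for $\phi_5$ or a $\Q(\sqrt5)$-descent on $E$ is not needed and would not produce the required $S_{K,10d}$-unit data.

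Second, the direction of the contradiction is reversed in your sketch. The hypothesis $v_\mfP(d)>4v_\mfP(2)$ is used via Lemma~\ref{reduction on T and S main result2} (on the Frey curve $E$) and Lemma~\ref{criteria for potentially multiplicative reduction} to conclude $v_\mfP(j_{E'})<0$, as recorded in Proposition~\ref{auxilary result for main result2}(4). The hypothesis bound $\left|v_\mfP(\alpha\beta^{-1})\right|\leq 6v_\mfP(2)$, i.e. $-4v_\mfP(2)\leq v_\mfP(\mu)\leq 8v_\mfP(2)$, is then fed into $j_{E'}=2^8(\mu+1)^3/\mu$ and a short case analysis shows $v_\mfP(j_{E'})\geq 0$ in every case. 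That is the contradiction. You instead claim that the $d$-condition is "designed so that $\alpha\beta^{-1}$ picks up the full contribution of $d$ at $\mfP$, making $|v_\mfP(\alpha\beta^{-1})|>6v_\mfP(2)$", i.e. you argue that the constructed solution violates the theorem's hypothesis. That is not what happens: the hypothesis bound is assumed to hold for every solution and is used positively, and the incompatible inequality is $v_\mfP(j_{E'})<0$ versus $v_\mfP(j_{E'})\geq 0$, not a violation of \eqref{assumption for main result2}.
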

	
	\begin{remark}
		For any finite set $S \subseteq P_K$ and any two solutions $(\alpha, \beta, \gamma), (\alpha', \beta', \gamma') \in \mcO_{S}^\ast \times \mcO_{S}^\ast \times \mcO_{S}$ to the equation $\alpha+\beta=\gamma^2$, define a relation $\sim$ as follows: $(\alpha, \beta, \gamma) \sim (\alpha', \beta', \gamma')$ if there exists $\delta \in \mcO_{S}^\ast$ such that $\alpha= \delta^2 \alpha'$, $\beta= \delta^2 \beta'$ and $\gamma= \delta \gamma'$. Using~\cite[Theorem 39]{M22} with $i=2$, the equation $\alpha+\beta=\gamma^2$ with $\alpha, \beta \in \mcO_{S}^\ast$ and $\gamma \in \mcO_{S}$ has only finitely many solutions up to the equivalence $\sim$.
	\end{remark}
	
	The following corollary follows from Theorems~\ref{main result2 for (r,r,p)}, \ref{main result1 for (r,r,p)}.
	\begin{cor}
		\label{cor to main result2}
		\begin{enumerate}
			\item 
			Let $K, \mfP,d$ satisfy the hypothesis of Theorem~\ref{main result2 for (r,r,p)}. Assume $\mfP$ is unique. Then, there exists a constant $V=V_{K,d}>0$ (depending on $K,d$) such that for primes $p>V$, the equation $x^5+y^5=dz^p$ has no non-trivial primitive solution $(a,b,c) \in \mcO_{K}^3$ with $c\neq \pm1$ and $2 \nmid c$.
			\item On the other hand, let $K, \mfP,d$ satisfy the hypothesis of Theorem~\ref{main result1 for (r,r,p)} with $r=5$. Then, there exists a constant $V=V_{K,d}>0$ (depending on $K,d$) such that for primes $p>V$, the equation $x^5+y^5=dz^p$ has no non-trivial primitive solution $(a,b,c) \in \mcO_{K}^3$ with $2 | c$.
		\end{enumerate} 
	\end{cor}
	\begin{remark}
		We can remove the condition $c\neq \pm1$ in Theorem~\ref{main result2 for (r,r,p)} if we assume the modularity of elliptic curves over $K$. In particular, the elliptic curves over $K=\Q$ are modular. Further, $K=\Q$, $\mfP=2\Z$ and $d=\pm 2^m$ with $m \geq 5$ satisfy all the hypotheses of Theorem \ref{main result2 for (r,r,p)}. 
	\end{remark}
	\subsection{Comparison with the works of~\cite{M23}, \cite{KMO24}:}
	We discuss how the results of this article for studying the solutions of the equation $x^r+y^r=dz^p$ with $d \in \mcO_K\setminus \{0\}$ over $K$ are an improvement and generalization of the method used in~\cite{M23} (respectively \cite{KMO24}) to study the solutions of the equation $x^r+y^r=z^p$ over $K$ (respectively $x^r+y^r=dz^p$ with $d \in \Z \setminus\{0\}$ over $\Q$).
	\begin{itemize}
		
		\item Regarding the modularity, \cite{M23} (respectively  \cite{KMO24}) proved that the modularity of the Frey curve corresponding to the equation $x^r+y^r=z^p$ (respectively $x^r+y^r=dz^p$) with $p \gg 0$ over $K^+$ (respectively $\Q^+$). However, we prove the modularity of the Frey curves corresponding to the equations $x^r+y^r=dz^p$ and $x^5+y^5=dz^p$ with $p \gg 0$ over $K^+$ and over $K$, respectively (cf. Theorems~\ref{modularity result for main result1}, \ref{modularity result for main result2}). Whereas the Frey curve in \cite{M23} (respectively \cite{KMO24}) is semistable away from $S_{K^+,2r}$ (respectively  $S_{\Q^+,2r}$), here the Frey curve $E/K^+$ in \eqref{Frey curve result1} (respectively $E/K$ in \eqref{Frey curve result2}) is semistable away from $S_{K^+,2rd}$ (respectively $S_{K,10d}$) (cf. Theorems~\ref{semi stable red of Frey curve},~\ref{reduction away from S_{K,10d}}).
		\item 	Note that our set of primes $S_{K^+, 2rd}$ contains the one in in \cite{M23}, namely $S_{K^+, 2r}$, adding complexity to the problem.
		To get a contradiction in Step $3$, the proof of~\cite[Theorem 4]{M23} for $x^r+y^r=z^p$ uses some explicit bounds on the solutions of the $S_{K^+,2r}$-unit equation and the proof of \cite[Theorem 1.1]{KMO24} uses the weak Frey-Mazur conjecture. However, the proof of Theorem~\ref{main result1 for (r,r,p)} for $x^r+y^r=dz^p$ uses some explicit bounds on the solutions of $S_{K^+,2rd}$-unit equation to get a contradiction in Step $3$. 
		\item Since the proof of Theorem~\ref{main result1 for (r,r,p)} uses the explicit bound \eqref{assumption for main result1} of the $S_{K^+, 2rd}$-unit equation \eqref{S_K-unit solution}, we are able to give local criteria of $K$ satisfying Theorem~\ref{main result1 for (r,r,p)} when $d$ is a rational prime and $d \in \mcO_K^\ast$ (cf. \S\ref{section for d is a rational prime} and \S\ref{section for d is a unit}). Note that the local criteria of $K$ when $d$ is a rational prime is different from the local criteria of $K$ in \cite{M23}. The same ideas do not continue to hold for studying all the solutions $(a,b,c) \in \mcO_K^3$ to the equation $x^r+y^r=dz^p$ since there are units in the fields $K(\zeta_r)^+$ satisfying \eqref{S_K-unit solution} for any $r \geq 5$. Indeed, for any rational prime $r \geq 5$ with $K:=\Q(\zeta_r)^+$, there are units $\lambda, \mu \in \mcO_K^\ast$ which satisfy (2.2) (cf. \cite[Lemma 18]{SV25} for more details).
		
		\item Note that the Frey curve in \eqref{Frey curve result2} for the equation $x^5+y^5=dz^p$ is different from the Frey curve in \cite{M23} (respectively  \cite{KMO24}) and also it is defined over $K$ instead of $K^+$ (respectively $\Q^+$). Thus the method of proving Theorem~\ref{main result2 for (r,r,p)} is quite different from \cite{M23}, \cite{KMO24}. Also, to arrive at a contradiction in Step 3 outlined above, we use an explicit bound on the solutions of	the equation $\alpha+\beta=\gamma^2$, where $\alpha, \beta \in \mcO_{S_{K,10d}}^\ast$ and $\gamma \in \mcO_{S_{K,10d}}$ instead of the $S_{K^+,2r}$-unit equation considered in \cite{M23}. 
		
	\end{itemize}
	


	
	\section{Proof of Theorem~\ref{main result1 for (r,r,p)}}
	\label{steps to prove main result1}
	
	\subsection{Construction of Frey elliptic curve}
	\label{section for Frey curve}
	In this subsection, we recall the Frey elliptic curve associated with any non-trivial primitive solution to the Diophantine equation $x^r+y^r=dz^p$ from \cite{F15}.
	Recall that $L:=K(\zeta_r)$ and $K^+:=K(\zeta_r+ \zeta_r^{-1})$, where $\zeta_r$ is a primitive $r$th root of unity. Recall that $\mfP \in S_{K+, 2}$ is a prime ideal of $\mcO_{K^+}$ lying above $2$.
	For any prime $r \geq 5$, let $$\phi_r(x,y):= \frac{x^r+y^r}{x+y}= \sum_{i=0}^{r-1}(-1)^ix^{r-1-i}y^i.$$ 
	Then the factorization of the polynomial $\phi_r(x,y)$ over $L$ is as follows.
	\begin{small}
		\begin{equation}
			\label{facored over L}
			\phi_r(x,y)= \prod_{i=1}^{r-1}(x+\zeta_r^{i} y).
		\end{equation}
	\end{small}
	For $0 \leq k \leq \frac{r-1}{2}$, let $f_k(x,y):= (x+\zeta_r^k y) (x+\zeta_r^{-k} y)= x^2+ (\zeta_r^{k}+\zeta_r^{-k})xy+ y^2$. Then $f_k(x,y) \in K^{+}[x,y]$ for all $k$ with $0 \leq k \leq \frac{r-1}{2}$.
	The factorization of the polynomial $\phi_r(x,y)$ over $K^{+}$ is as follows.
	\begin{small}
		\begin{equation}
			\label{facored over K^+}
			\phi_r(x,y)= \prod_{i=1}^{\frac{r-1}{2}} f_k(x,y).
		\end{equation}
	\end{small}
	Since $r\geq 5$, $\frac{r-1}{2} \geq 2$. Fix three integers $k_1, k_2, k_3$ such that $0 \leq k_1 <k_2 <k_3 \leq \frac{r-1}{2}$.
	Let 
	\begin{small}
		\begin{align}
			\label{eqn for alpha, beta, gamma}
			\alpha= \zeta_r^{k_3}+\zeta_r^{-k_3}-\zeta_r^{k_2}-\zeta_r^{-k_2}, \nonumber\\
			\beta= \zeta_r^{k_1}+\zeta_r^{-k_1}-\zeta_r^{k_3}-\zeta_r^{-k_3},\\
			\gamma= \zeta_r^{k_2}+\zeta_r^{-k_2}-\zeta_r^{k_1}-\zeta_r^{-k_1} \nonumber.
		\end{align}
	\end{small}
	Clearly, $\alpha, \beta, \gamma \in K^{+}$.
	Then $\alpha f_{k_1}+ \beta f_{k_2}+\gamma f_{k_3}=0$. Let $A(x,y):=\alpha f_{k_1} (x,y)$, $B(x,y):=\beta f_{k_2} (x,y)$, and $C(x,y):=\gamma f_{k_3} (x,y)$. Hence $A(x,y), B(x,y),C(x,y) \in K^{+}[x,y]$.
	
	%
	Let $(a,b,c) \in \mcO_{K^+}^3$ be a non-trivial primitive solution of the equation \eqref{r,r,p over K} with $\mfP |c$ for some $\mfP \in S_{K+, 2}$. 
	By \eqref{r,r,p over K} and \eqref{facored over K^+}, we have
	\begin{small}
		\begin{equation}
			\label{rel between c and a+b}
			dc^p= (a+b) \prod_{i=1}^{\frac{r-1}{2}} f_k(a,b).	
		\end{equation}
	\end{small}
	
	Since $f_0(a,b)=(a+b)^2$ and $\mfP |c$, we have $\mfP | f_k(a,b)$, for some $k$ with $0 \leq k \leq \frac{r-1}{2}$. Choose $k_1 \in \{0,1, \dots, \frac{r-1}{2}\}$ such that  $\mfP | f_{k_1}(a,b)$. Choose $k_2, k_3 \in \{0,1, \dots, \frac{r-1}{2}\}$ such that $k_1, k_2, k_3$ are distinct. Let $A:=\alpha f_{k_1}(a,b)$, $B:=\beta f_{k_2}(a,b)$, and $C:=\gamma f_{k_3}(a,b)$. Consider the Frey elliptic curve 
	\begin{equation}
		\label{Frey curve result1}
		E:=E_{a,b,c}/K^+: Y^2=X(X-A)(X+B),
	\end{equation}
	where $\Delta_E=2^4(ABC)^2$, $c_4=2^4(AB+BC+CA)$ and $j_E=2^8 \frac{(AB+BC+CA)^3}{(ABC)^2}$.

	\begin{remark}
		We note that the Frey curve in~\eqref{Frey curve result1} depends on $p$ since $A,B,C$ depend on $p$.
	\end{remark}
	
	\subsection{Conductor of the Frey elliptic curve}
	In this subsection, we study the reduction type of the Frey curve given in \eqref{Frey curve result1}.
	Before that, we will prove some lemmas that will be useful for the reduction of the Frey curve. Recall that, $L:=K(\zeta_r)$ and $S_{L, rd}:=\{\mfp \in P_L : \mfp |rd\}.$
	
	\begin{lem}
		\label{coprime lem1}
		Let $(a,b,c) \in \mcO_{L}^3$ be a non-trivial primitive solution of the equation \eqref{r,r,p over K}. Then any two algebraic numbers $a+\zeta_r^ib$ and $a+\zeta_r^jb$ with $0 \leq i <j \leq r-1$ are coprime outside $S_{L,rd}$. 
	\end{lem}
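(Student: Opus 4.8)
The plan is to bound the ideal generated by the difference of any two of the factors $a+\zeta_r^i b$ and $a+\zeta_r^j b$, and then show that every prime dividing both must divide $r d$. First I would set $\mathfrak{d} = (a+\zeta_r^i b)\mcO_L + (a+\zeta_r^j b)\mcO_L$ and take a prime ideal $\mfp \in P_L$ dividing $\mathfrak{d}$. Subtracting the two generators, $\mfp$ divides $(\zeta_r^i - \zeta_r^j)b = \zeta_r^i(1 - \zeta_r^{j-i})b$. Since $\zeta_r^i$ is a unit and $1 \leq j-i \leq r-1$, the element $1 - \zeta_r^{j-i}$ is an associate of $1-\zeta_r$ up to a unit (all such cyclotomic differences generate the prime(s) above $r$ in $\Z[\zeta_r]$, hence lie over $r$ in $\mcO_L$), so $\mfp \mid r b$ or $\mfp \mid b$ (the precise statement being: $\mfp$ divides $b$ or $\mfp$ lies above $r$). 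Similarly, taking a suitable $\zeta_r$-multiple combination — e.g. $\zeta_r^j(a+\zeta_r^i b) - \zeta_r^i(a+\zeta_r^j b) = (\zeta_r^j - \zeta_r^i)a$ — shows $\mfp \mid a$ or $\mfp$ lies above $r$.

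Next I would use primitivity: if $\mfp$ divides both $a$ and $b$, then since $a^r + b^r = dc^p$ we would get $\mfp \mid dc^p$, and combined with $\mfp \mid a, \mfp \mid b$ this contradicts $a\mcO_K + b\mcO_K + c\mcO_K = \mcO_K$ unless $\mfp \mid d$. Actually, more cleanly: $\mfp \mid a$ and $\mfp \mid b$ forces $\mfp \mid \gcd$-ideal, which by primitivity of $(a,b,c)$ forces $\mfp \mid c$ as well (since the three generate the unit ideal, if $\mfp$ divides two of them it cannot divide the third, giving an immediate contradiction); hence the case $\mfp \mid a$ and $\mfp \mid b$ simply cannot occur. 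So from the two divisibility alternatives above, outside the primes over $r$ we are left with: $\mfp \mid b$ and ($\mfp \mid a$ or $\mfp$ over $r$), which outside $S_{L,r}$ forces $\mfp \mid a$ and $\mfp \mid b$ — impossible. Wait, I need to be careful: from the first relation $\mfp \mid a$ or $\mfp$ over $r$; from the second $\mfp \mid b$ or $\mfp$ over $r$. If $\mfp$ is not over $r$, then $\mfp \mid a$ and $\mfp \mid b$, contradicting primitivity. Hence every $\mfp \mid \mathfrak{d}$ lies over $r$, so in particular $\mfp \in S_{L,rd}$, which is what we want.

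The one subtlety — and the only place where $d$ enters — is that the above argument actually proves the stronger statement that $a + \zeta_r^i b$ and $a+\zeta_r^j b$ are coprime outside $S_{L,r}$; the factor $d$ in $S_{L,rd}$ is harmless slack that the authors keep for uniformity with later lemmas where $d$ genuinely appears (e.g. when relating the product $\prod(a+\zeta_r^i b)$ to $dc^p$ and tracking which primes can divide $c$). I would therefore state and prove the coprimality outside $S_{L,r}$ and remark that $S_{L,r} \subseteq S_{L,rd}$.

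The main obstacle is bookkeeping the cyclotomic unit arithmetic correctly: one must know that for $1 \le m \le r-1$ the element $1-\zeta_r^m$ generates (in $\mcO_L$) only primes lying above $r$, equivalently that $(1-\zeta_r^m)/(1-\zeta_r)$ is a unit in $\Z[\zeta_r]$ — a standard fact about cyclotomic fields that survives base change to $L$ since $r$ is the only rational prime ramifying in $\Q(\zeta_r)/\Q$. Everything else is elementary ideal-theoretic manipulation plus the primitivity hypothesis. I expect the write-up to be short: define $\mathfrak{d}$, extract the two linear combinations eliminating $a$ and eliminating $b$, invoke the cyclotomic fact, and close with primitivity.
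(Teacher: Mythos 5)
Your outline starts correctly and the cyclotomic bookkeeping (that $1-\zeta_r^{j-i}$ is supported only at primes over $r$) is right, but you then talk yourself into a genuine error at the decisive step. You claim that $\mfp \mid a$ and $\mfp \mid b$ ``contradicts primitivity'' by itself, ``giving an immediate contradiction.'' It does not. The paper's definition of primitivity is $a\mcO_K+b\mcO_K+c\mcO_K=\mcO_K$ (equivalently, with $\mcO_L$ here), i.e.\ no prime divides all \emph{three}; a prime is perfectly allowed to divide two of them. To get a contradiction you must bring in the equation $a^r+b^r=dc^p$: from $\mfp \mid a$ and $\mfp \mid b$ you deduce $\mfp \mid dc^p$, and \emph{only because} $\mfp\notin S_{L,rd}$ (so $\mfp\nmid d$) can you conclude $\mfp\mid c$, which together with $\mfp\mid a,b$ contradicts primitivity. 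This is exactly where $d$ enters, and it enters essentially, not as ``harmless slack.''

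Consequently your closing paragraph --- that the argument actually proves the stronger coprimality outside $S_{L,r}$ --- is false. If $\mfp\mid d$ with $\mfp\notin S_{L,r}$, then $\mfp\mid a$, $\mfp\mid b$, $\mfp\nmid c$ is consistent with primitivity and with the equation, so the two factors can indeed share such a prime. The correct statement is coprimality outside $S_{L,rd}$, as in the lemma.

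As a minor stylistic note, the paper takes a slightly shorter route: once you have $\mfp\mid b$ from the difference $(\zeta_r^i-\zeta_r^j)b$, you get $\mfp\mid a$ immediately from $\mfp\mid a+\zeta_r^i b$; the second linear combination eliminating $b$ is unnecessary. That part of your write-up is harmless, but the missing appeal to $a^r+b^r=dc^p$ and to $\mfp\nmid d$ is a real gap.
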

	
	\begin{proof}
		We will prove this by contradiction. Suppose $\mfq \notin S_{L,rd}$ is a prime of $L$ which divides both $a+\zeta_r^ib$ and $a+\zeta_r^jb$ for some $i,j$ with $0 \leq i <j \leq r-1$. This gives $\mfq | (\zeta_r^i-\zeta_r^j)b=  \zeta_r^i(1-\zeta_r^{j-i})b$. Since only primes of $L$ dividing $(1-\zeta_r^{j-i})$ are the primes inside $S_{L,r}$ and $\mfq \notin S_{L,rd}$, we have $\mfq |b$.
		Therefore $\mfq |a$, and hence $\mfq | dc^p$. Since $\mfq \notin S_{L,rd}$, $\mfq \nmid d$ and hence $\mfq |c$, which is a contradiction to $(a,b,c) $ being primitive.
	\end{proof}
	
	\begin{lem}
		\label{coprime for f_k}
		Let $(a,b,c) \in \mcO_{K^+}^3$ be a non-trivial primitive solution of the equation \eqref{r,r,p over K}. Then any two numbers $f_i(a,b)$ and $f_j(a,b)$ in $ \mcO_{K^+}$ with $0 \leq i <j \leq \frac{r-1}{2}$ are coprime outside $S_{K^+,rd}$. The prime factorization of $f_k(a,b) \mcO_K^+$ for $0 \leq k \leq \frac{r-1}{2}$ is as follows:
		
		\begin{equation}
			f_k(a,b) \mcO_K^+= \mfc_k^p \prod_{\mfp \in S_{K^+,rd}} \mfp^{e_{k,p}},
		\end{equation}
		where $\mfc_k \notin S_{K^+,rd}$ is an ideal of $\mcO_K^+$ with $\mfc_k | c\mcO_K^+$ and $e_{k,p} \geq 0$ for $0 \leq k \leq \frac{r-1}{2}$.
	\end{lem}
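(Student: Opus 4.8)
The plan is to deduce the coprimality assertion directly from Lemma~\ref{coprime lem1} by descending from $L$ to $K^+$, and then to read off the factorization of $f_k(a,b)\mcO_{K^+}$ from the identity $dc^p=(a+b)\prod_{k=1}^{(r-1)/2}f_k(a,b)$ supplied by \eqref{facored over K^+}.

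First I would establish the coprimality. Fix $0\le i<j\le\frac{r-1}{2}$ and suppose some prime $\mfp\in P_{K^+}\setminus S_{K^+,rd}$ divided both $f_i(a,b)$ and $f_j(a,b)$; choose a prime $\mfP$ of $L$ lying above $\mfp$. Since $f_k(a,b)=(a+\zeta_r^k b)(a+\zeta_r^{-k}b)$, with $f_0(a,b)=(a+b)^2$, the prime $\mfP$ divides a factor $a+\zeta_r^{m_1}b$ with $m_1\in\{i,-i\}$ and a factor $a+\zeta_r^{m_2}b$ with $m_2\in\{j,-j\}$ (mod $r$). Since $1\le i<j\le\frac{r-1}{2}$ forces $0<i+j<r$ and $i\ne j$, and since $r$ is odd so $i\not\equiv -i\pmod r$ when $i\ge1$ while $m_1\equiv 0$ when $i=0$, the exponents $m_1$ and $m_2$ are distinct modulo $r$. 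Hence $\mfP$ divides two of the numbers $a+\zeta_r^mb$, $0\le m\le r-1$, with distinct exponents, so Lemma~\ref{coprime lem1} forces $\mfP\in S_{L,rd}$, i.e. $rd\in\mfP$; then $rd\in\mfP\cap\mcO_{K^+}=\mfp$, so $\mfp\in S_{K^+,rd}$, a contradiction. This shows $f_i(a,b)$ and $f_j(a,b)$ are coprime outside $S_{K^+,rd}$, and in particular (the case $i=0$) that $a+b$ is coprime to each $f_j(a,b)$, $1\le j\le\frac{r-1}{2}$, outside $S_{K^+,rd}$.

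Then, for the factorization, fix $k$ with $0\le k\le\frac{r-1}{2}$ and let $\mfp\in P_{K^+}\setminus S_{K^+,rd}$, so that $v_\mfp(d)=0$. Applying $v_\mfp$ to $dc^p=(a+b)\prod_{j=1}^{(r-1)/2}f_j(a,b)$ gives $p\,v_\mfp(c)=v_\mfp(a+b)+\sum_{j=1}^{(r-1)/2}v_\mfp(f_j(a,b))$, a sum of nonnegative integers at most one of which is nonzero by the coprimality just proved. Hence each summand is $0$ or $p\,v_\mfp(c)$; and since $v_\mfp(f_0(a,b))=2v_\mfp(a+b)$, in every case $p\mid v_\mfp(f_k(a,b))$, with $v_\mfp(f_k(a,b))\le p\,v_\mfp(c)$ for $k\ge1$. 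I would then set $\mfc_k:=\prod_{\mfp\in P_{K^+}\setminus S_{K^+,rd}}\mfp^{\,v_\mfp(f_k(a,b))/p}$, which is an integral ideal (as $f_k(a,b)\in\mcO_{K^+}$), is supported outside $S_{K^+,rd}$, and satisfies $\mfc_k\mid c\,\mcO_{K^+}$ for $1\le k\le\frac{r-1}{2}$; comparing valuations at every prime then yields $f_k(a,b)\mcO_{K^+}=\mfc_k^{\,p}\prod_{\mfp\in S_{K^+,rd}}\mfp^{\,e_{k,p}}$ with $e_{k,p}:=v_\mfp(f_k(a,b))\ge0$. (For $k=0$ one only gets $\mfc_0\mid c^2\mcO_{K^+}$, which is all the $f_0$ factor requires and does not affect the $p$-th-power shape.)

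I do not expect a genuine obstacle here: the statement is essentially a bookkeeping consequence of Lemma~\ref{coprime lem1}. The two points that need a little care are the descent of coprimality from $L$ to $K^+$ through an arbitrary prime $\mfP$ above $\mfp$ (using that $\mfP\mid rd$ in $L$ implies $\mfp\mid rd$ in $K^+$), and the square in $f_0(a,b)=(a+b)^2$, which slightly weakens the divisibility of $\mfc_0$ by $c$ without changing the $p$-th-power structure. It is also worth recording at the outset that every prime of $L$ dividing some $1-\zeta_r^m$, $1\le m\le r-1$, lies in $S_{L,r}$, which is precisely the input Lemma~\ref{coprime lem1} relies on.
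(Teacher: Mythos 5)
Your proof is correct and follows essentially the same route as the paper: reduce to Lemma~\ref{coprime lem1} via the factorization $f_k(a,b)=(a+\zeta_r^k b)(a+\zeta_r^{-k}b)$, descend the coprimality from $L$ to $K^+$, then compare valuations on both sides of $dc^p=(a+b)\prod_{j=1}^{(r-1)/2}f_j(a,b)$ to obtain the $p$-th-power shape outside $S_{K^+,rd}$. Your observation that for $k=0$ one only gets $\mfc_0\mid c^2\mcO_{K^+}$ rather than $\mfc_0\mid c\mcO_{K^+}$ is a valid minor imprecision in the lemma's statement, but it is harmless: all subsequent uses (Theorem~\ref{semi stable red of Frey curve}, Theorem~\ref{modularity result for main result1}, Lemma~\ref{reduction on T and S}) rely only on the $p$-th-power structure and the pairwise coprimality and support of the ideals $\mfc_A,\mfc_B,\mfc_C$, not on $\mfc_k\mid c\mcO_{K^+}$.
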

	
	\begin{proof}
		Recall that $f_k(a,b)= (a+\zeta_r^kb) (a+\zeta_r^{-k}b)$ for $0 \leq k \leq \frac{r-1}{2}$. By Lemma~\ref{coprime lem1}, $f_i(a,b)$ and $f_j(a,b)$ with $0 \leq i <j \leq \frac{r-1}{2}$ are coprime outside the set $S_{L,rd}$. Since $f_k(a,b) \in \mcO_K^+$ for $0 \leq k \leq \frac{r-1}{2}$ and $K^+ \subset L$, $f_i(a,b)$ and $f_j(a,b)$ with $0 \leq i <j \leq \frac{r-1}{2}$ are coprime outside the set $S_{K^+,rd}$.
		
		By \eqref{r,r,p over K} and \eqref{facored over K^+}, we have 
		$$dc^p= (a+b) \prod_{i=1}^{\frac{r-1}{2}} f_k(a,b).$$ 
		The proof of the lemma follows by comparing the prime factorization of both sides of the above equation in $K^+$. Note that here $f_0(a,b)=(a+b)^2$.
	\end{proof}
	
	\begin{cor}
		\label{coprime for A,B,C}
		Let $(a,b,c) \in \mcO_{K^+}^3$ be a non-trivial primitive solution of the equation \eqref{r,r,p over K}. For any integers $k_1, k_2, k_3$ with $0 \leq k_1 <k_2 <k_3 \leq \frac{r-1}{2}$, the algebraic integers  $A=\alpha f_{k_1} (a,b)$, $B=\beta f_{k_2} (a,b)$, and $C=\gamma f_{k_3} (a,b)$ are coprime outside $S_{K^+,rd}$.
	\end{cor}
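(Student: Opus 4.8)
\emph{Proof proposal.} The plan is to combine Lemma~\ref{coprime for f_k} with the observation that the scalars $\alpha,\beta,\gamma$ in~\eqref{eqn for alpha, beta, gamma} are units away from the primes of $K^+$ lying above $r$; then multiplying $f_{k_1}(a,b),f_{k_2}(a,b),f_{k_3}(a,b)$ by them cannot create new common factors outside $S_{K^+,rd}$.

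First I would record the elementary identity, valid for all exponents $m,n$,
\[
\zeta_r^{m}(\zeta_r^{m}+\zeta_r^{-m}-\zeta_r^{n}-\zeta_r^{-n})=(\zeta_r^{m-n}-1)(\zeta_r^{m+n}-1).
\]
Applying it with $(m,n)$ equal to $(k_3,k_2)$, $(k_1,k_3)$, $(k_2,k_1)$ respectively shows that, up to a power of $\zeta_r$ (a unit of $\mcO_L$), each of $\alpha,\beta,\gamma$ is a product of two elements of the form $1-\zeta_r^{j}$. Since $0\le k_1<k_2<k_3\le\frac{r-1}{2}$, the exponents $j$ that occur — the pairwise differences and sums of $k_1,k_2,k_3$ — all lie strictly between $0$ and $r$ in absolute value, hence are not divisible by $r$; and for $r\nmid j$ the element $1-\zeta_r^{j}$ divides $r$ in $\mcO_L$ (from $\prod_{k=1}^{r-1}(1-\zeta_r^{k})=r$), so all its prime divisors lie above $r$. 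Consequently $\alpha,\beta,\gamma\in\mcO_{K^+}$ are nonzero and are $\mfq$-units for every prime $\mfq$ of $K^+$ not lying above $r$; nonvanishing can also be read off by writing them as $2\cos(2\pi k_i/r)-2\cos(2\pi k_j/r)$ with $k_i\ne k_j$ and both arguments in $[0,\pi)$.

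Next I would fix a prime $\mfq$ of $K^+$ with $\mfq\notin S_{K^+,rd}$. Since $S_{K^+,r}\subseteq S_{K^+,rd}$, the prime $\mfq$ does not lie above $r$, so $v_{\mfq}(\alpha)=v_{\mfq}(\beta)=v_{\mfq}(\gamma)=0$, and therefore
\[
v_{\mfq}(A)=v_{\mfq}(f_{k_1}(a,b)),\qquad v_{\mfq}(B)=v_{\mfq}(f_{k_2}(a,b)),\qquad v_{\mfq}(C)=v_{\mfq}(f_{k_3}(a,b)).
\]
By Lemma~\ref{coprime for f_k} the elements $f_{k_1}(a,b),f_{k_2}(a,b),f_{k_3}(a,b)$ are pairwise coprime outside $S_{K^+,rd}$, so at most one of them has positive valuation at $\mfq$; hence at most one of $A,B,C$ is divisible by $\mfq$. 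As $\mfq$ was an arbitrary prime outside $S_{K^+,rd}$, this proves the claim.

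I do not expect a genuine obstacle here: the argument is bookkeeping layered on top of Lemma~\ref{coprime for f_k}, and the only point deserving attention is the claim that $\alpha,\beta,\gamma$ are supported solely at primes above $r$, which is exactly what the factorization identity above delivers once one checks that the relevant exponents $k_i\pm k_j$ avoid multiples of $r$ in the range $0\le k_1<k_2<k_3\le\frac{r-1}{2}$.
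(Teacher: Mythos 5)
Your argument is correct and follows essentially the same route as the paper: factor $\alpha,\beta,\gamma$ (up to a root of unity) as products of elements $1-\zeta_r^{j}$ with $r\nmid j$, conclude their prime support in $K^+$ is contained in $S_{K^+,r}\subseteq S_{K^+,rd}$, and then invoke Lemma~\ref{coprime for f_k}. The paper states this factorization without proof, while you supply the explicit identity $\zeta_r^{m}(\zeta_r^{m}+\zeta_r^{-m}-\zeta_r^{n}-\zeta_r^{-n})=(\zeta_r^{m-n}-1)(\zeta_r^{m+n}-1)$ and the range check on $k_i\pm k_j$; this is a welcome clarification of the same idea, not a different approach.
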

	\begin{proof}
		By the definition of $\alpha, \beta, \gamma$ in \eqref{eqn for alpha, beta, gamma}, we can write $\alpha, \beta, \gamma$ of the form $\pm \zeta_r^s (1-\zeta_r^t)(1-\zeta_r^u)$, where $t, u$ are not multiples of $r$. Hence, the only primes of $K^+$ dividing $\alpha \beta \gamma$ are the primes in $S_{K^+,r}$. 
		Since $A=\alpha f_{k_1} (a,b)$, $B=\beta f_{k_2} (a,b)$, $C=\gamma f_{k_3} (a,b)$, the proof of the corollary follows by Lemma~\ref{coprime for f_k} since $f_k(a,b)$ are coprime outside $S_{K^+,rd}$ for $0 \leq k \leq \frac{r-1}{2}$.
	\end{proof}
	The following lemma is similar to \cite[Lemma 13]{M23}, and the proof of the lemma follows from Lemma~\ref{coprime for f_k} and Corollary~\ref{coprime for A,B,C}.
	\begin{lem}
		\label{factorisation of A,B,C}
		Let $(a,b,c) \in \mcO_{K^+}^3$ be a non-trivial primitive solution of the equation~\eqref{r,r,p over K}. Let $A,B,C$ be as in Corollary~\ref{coprime for A,B,C}. Then the prime factorization of $A \mcO_K^+$, $B \mcO_K^+$, $C \mcO_K^+$ is as follows.
		\begin{equation}
			\label{ideal factor of A,B,C}
			A\mcO_K^+= \mfc_A^p \prod_{\mfp \in S_{K^+,rd}} \mfp^{\alpha_{p}},\ B\mcO_K^+= \mfc_B^p \prod_{\mfp \in S_{K^+,rd}} \mfp^{\beta_{p}}, \
			C\mcO_K^+= \mfc_C^p \prod_{\mfp \in S_{K^+,rd}} \mfp^{\gamma_{p}},
		\end{equation}
		where $\mfc_A, \mfc_B, \mfc_C \notin S_{K^+,rd}$ are pairwise coprime prime ideals of $K^+$ dividing $c\mcO_K^+$, and $\alpha_{p}, \beta_{p}, \gamma_{p} \geq 0$.
	\end{lem}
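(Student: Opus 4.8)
The plan is to read off the factorizations of $A\mcO_K^+$, $B\mcO_K^+$, $C\mcO_K^+$ directly from the factorizations of the $f_k(a,b)$ supplied by Lemma~\ref{coprime for f_k}, using the fact (already established inside the proof of Corollary~\ref{coprime for A,B,C}) that $\alpha,\beta,\gamma$ are, up to a sign and a power of $\zeta_r$, products of two cyclotomic factors of the form $(1-\zeta_r^t)(1-\zeta_r^u)$ with $r\nmid t,u$, so that the ideals $\alpha\mcO_K^+$, $\beta\mcO_K^+$, $\gamma\mcO_K^+$ are supported only on primes in $S_{K^+,r}\subseteq S_{K^+,rd}$.

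First I would invoke Lemma~\ref{coprime for f_k} to write, for $i\in\{1,2,3\}$,
\[
f_{k_i}(a,b)\,\mcO_K^+=\mfc_{k_i}^{\,p}\prod_{\mfp\in S_{K^+,rd}}\mfp^{e_{k_i,\mfp}},
\]
with $\mfc_{k_i}\notin S_{K^+,rd}$, $\mfc_{k_i}\mid c\,\mcO_K^+$ and $e_{k_i,\mfp}\ge 0$. Since $A=\alpha f_{k_1}(a,b)$ with $\alpha\in\mcO_{K^+}$ and $\alpha\mcO_K^+$ supported on $S_{K^+,r}$, multiplying the two factorizations gives
\[
A\,\mcO_K^+=\mfc_{k_1}^{\,p}\prod_{\mfp\in S_{K^+,rd}}\mfp^{\alpha_{\mfp}},\qquad \alpha_{\mfp}:=e_{k_1,\mfp}+v_\mfp(\alpha)\ge 0,
\]
and I set $\mfc_A:=\mfc_{k_1}$. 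The point is that the $\alpha$-factor only alters exponents at primes of $S_{K^+,r}\subseteq S_{K^+,rd}$, hence it is absorbed into the $\prod_{\mfp\in S_{K^+,rd}}$ term and leaves the perfect $p$-th power part $\mfc_{k_1}^{\,p}$ untouched. The identical argument applied to $(\beta,k_2)$ and $(\gamma,k_3)$ produces the claimed factorizations of $B\,\mcO_K^+$ and $C\,\mcO_K^+$, with $\mfc_B:=\mfc_{k_2}$, $\mfc_C:=\mfc_{k_3}$, each dividing $c\,\mcO_K^+$.

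It remains to see that $\mfc_A,\mfc_B,\mfc_C$ are pairwise coprime. By Corollary~\ref{coprime for A,B,C} (equivalently, by Lemma~\ref{coprime for f_k}), $f_{k_i}(a,b)$ and $f_{k_j}(a,b)$ are coprime outside $S_{K^+,rd}$ for $i\ne j$; since $\mfc_{k_i}$ and $\mfc_{k_j}$ are by construction supported entirely outside $S_{K^+,rd}$, no prime can divide both, so they are coprime. There is essentially no obstacle here: this is a bookkeeping lemma whose content is fully packaged in Lemma~\ref{coprime for f_k} and Corollary~\ref{coprime for A,B,C}. The only step that needs a moment's care is the isolation of the $p$-th power part, i.e.\ checking that the extra primes contributed by $\alpha,\beta,\gamma$ all lie in $S_{K^+,r}$ and therefore do not perturb $\mfc_\bullet^{\,p}$. (The word ``prime'' in the statement should be read as ``coprime''; what the later arguments use is only the pairwise coprimality of $\mfc_A,\mfc_B,\mfc_C$ together with $\mfc_\bullet\mid c\,\mcO_K^+$ and $\mfc_\bullet\notin S_{K^+,rd}$, all of which the above yields.)
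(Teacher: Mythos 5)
Your proposal is correct and follows exactly the route the paper intends: the paper's own proof consists of the single sentence that the result ``follows from Lemma~\ref{coprime for f_k} and Corollary~\ref{coprime for A,B,C},'' and your write-up supplies precisely those details (multiplying the factorizations of $f_{k_i}(a,b)\mcO_{K^+}$ by the ideals generated by $\alpha,\beta,\gamma$, which are supported in $S_{K^+,r}\subseteq S_{K^+,rd}$, and deducing pairwise coprimality of $\mfc_A,\mfc_B,\mfc_C$ from the coprimality of the $f_{k_i}(a,b)$ outside $S_{K^+,rd}$). You also rightly flag that ``prime ideals'' in the statement should read ``ideals'' (the $\mfc_\bullet$ need not be prime, and only their pairwise coprimality, divisibility of $c\mcO_{K^+}$, and support outside $S_{K^+,rd}$ are used later).
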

	Let $F$ be a field and let $E/F$ be an elliptic curve with conductor $\mfn$. 
	For any prime ideal $\mfq$ of $F$, let $\Delta_\mfq$ be the minimal discriminant of $E$ at $\mfq$. For any rational prime $p$, let 
	
	\begin{equation}
		\label{conductor of elliptic curve}
		\mfm_p:= \prod_{ p|v_\mfq(\Delta_\mfq) \text{ and}\ \mfq ||\mfn} \mfq \text{ and } \mfn_p:=\frac{\mfn}{\mfm_p}.
	\end{equation}
	The following theorem determines the conductor of the Frey elliptic curve $E$ in \eqref{Frey curve result1} and describes the reduction type of $E$ at primes away from $ S_{K^+,2rd}$.
	
	\begin{thm}
		\label{semi stable red of Frey curve}
		Let $(a,b,c) \in \mcO_{K^+}^3$ be a non-trivial primitive solution of the equation \eqref{r,r,p over K} with $\mfP |c$ for some $\mfP \in S_{K+, 2}$, and let $E/K^+$ be the Frey curve given in~\eqref{Frey curve result1}. Then, for all primes $\mfq\in P_{K^+} \setminus S_{K^+,2rd}$, $E/ K^+$ is minimal, semistable at $\mfq$ and satisfies $p | v_\mfq(\Delta_E)$. Let $\mfn$ be the conductor of $E/ K^+$, and let $\mfn_p$ be as in \eqref{conductor of elliptic curve}. Then,
		\begin{equation}
			\label{conductor of E and E' x^2=By^p+Cz^p Type I}
			\mfn=\prod_{\mfP \in S_{K^+,2}}\mfP^{r_\mfP} \prod_{\mfp \in S_{K^+,rd}}\mfp^{f_\mfp} \prod_{\mfq|ABC,\ \mfq \notin S_{K^+,2rd}}\mfq,\ \mfn_p=\prod_{\mfP \in S_{K^+,2}}\mfP^{r_\mfP'} \prod_{\mfp \in S_{K^+,rd}}\mfp^{f_\mfp'},
		\end{equation}
		where $0 \leq r_\mfP^{\prime} \leq r_\mfP \leq 2+6v_\mfP(2)$ for $\mfP |2$ and $ 0 \leq f_\mfp' \leq f_\mfp \leq 2+3v_\mfp(3)$ for $\mfp \nmid 2$.
	\end{thm}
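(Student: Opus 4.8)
The statement is a standard conductor computation for a Frey curve of the shape $Y^2 = X(X-A)(X+B)$, so I would proceed by working locally at each prime $\mfq$ of $K^+$ and invoking Tate's algorithm, exactly in the spirit of the analogous arguments in \cite{FS15} and \cite[Lemma 13 ff.]{M23}. The input data are the discriminant $\Delta_E = 2^4 (ABC)^2$, the invariant $c_4 = 2^4(AB+BC+CA)$, and the factorizations of $A\mcO_{K^+}, B\mcO_{K^+}, C\mcO_{K^+}$ from Lemma~\ref{factorisation of A,B,C}, namely that each is a perfect $p$-th power times a product of primes in $S_{K^+,rd}$, with the $p$-th power parts $\mfc_A,\mfc_B,\mfc_C$ pairwise coprime and supported on $c\mcO_{K^+}$.

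\medskip

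\textbf{Step 1: primes $\mfq\notin S_{K^+,2rd}$ dividing $ABC$.} For such $\mfq$ we have $v_\mfq(2)=0$, so $v_\mfq(\Delta_E)=2v_\mfq(ABC)$, and by Corollary~\ref{coprime for A,B,C} exactly one of $A,B,C$ is divisible by $\mfq$; say $v_\mfq(A)=m>0$ and $v_\mfq(BC)=0$. Then $v_\mfq(c_4)=0$ (since $c_4\equiv \pm(\text{unit})^2$ modulo $\mfq$, using $AB+BC+CA\equiv BC\not\equiv 0$), hence $j_E=c_4^3/\Delta_E$ has $v_\mfq(j_E)=-2m<0$, so $E$ has potentially multiplicative reduction at $\mfq$; the Frey model is already minimal there (as $v_\mfq(c_4)=0$ forces minimality), the reduction is multiplicative (semi-stable), and $v_\mfq(\Delta_E)=2m$. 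Moreover $m = p\cdot v_\mfq(\mfc_A)$ by Lemma~\ref{factorisation of A,B,C}, so $p\mid v_\mfq(\Delta_E)$. This simultaneously establishes the "minimal, semi-stable, $p\mid v_\mfq(\Delta_E)$'' claim for all $\mfq\notin S_{K^+,2rd}$ (the primes not dividing $ABC$ are primes of good reduction, trivially fine), accounts for the factor $\prod_{\mfq\mid ABC,\ \mfq\notin S_{K^+,2rd}}\mfq$ in $\mfn$ with exponent $1$, and shows these primes contribute nothing to $\mfn_p$.

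\medskip

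\textbf{Step 2: primes $\mfp\in S_{K^+,rd}$ with $\mfp\nmid 2$.} Here $v_\mfp(2)=0$ and $\mfp\mid 3$ is possible. One runs Tate's algorithm / uses the standard conductor bounds: at any prime of residue characteristic $\ell\neq 2$ the exponent of the conductor of a curve given by a model with $\Delta$ as above is at most $2 + 3v_\mfp(\ell) $ when $\ell=3$ (the wild part is controlled by $v_\mfp(3)$) and $\le 2$ otherwise; in all cases the $\mfp$-exponent in $\mfn$ is some $f_\mfp$ with $0\le f_\mfp\le 2+3v_\mfp(3)$, and $f_\mfp'\le f_\mfp$ since $\mfn_p\mid\mfn$. (The extra generality over \cite{M23} caused by $d$ only enlarges the finite set $S_{K^+,rd}$; the local bound is the generic one.) I would cite the standard reference (e.g. Silverman's \emph{Advanced Topics}, or the conductor table in \cite{FS15}) rather than reproduce Tate's algorithm.

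\medskip

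\textbf{Step 3: primes $\mfP\in S_{K^+,2}$.} This is the genuinely nontrivial case and the main obstacle, because $2\in\mfP$ and $v_\mfP(\Delta_E)$ carries the $2^4$ together with $2\,v_\mfP(ABC)$, and the Frey model need not be minimal at $\mfP$. The bound $r_\mfP\le 2+6v_\mfP(2)$ is exactly Ogg's formula combined with the general bound on the conductor exponent at a prime above $2$: for an elliptic curve over a local field of residue characteristic $2$ with ramification index $e=v_\mfP(2)$, the conductor exponent is at most $2+6e$ (see \cite[Theorem IV.10.4]{silverman} or the discussion in \cite{FS15}). One gets $r_\mfP'\le r_\mfP$ from $\mfn_p\mid\mfn$. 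I do not need the precise value of $r_\mfP$, only this a priori bound, which is why the rough estimate suffices for the theorem as stated; the sharper analysis of $v_\mfP(j_E)$ at $\mfP\mid 2$ (showing $v_\mfP(j_E)<0$ and $p\nmid v_\mfP(j_E)$) is deferred to Lemma~\ref{reduction on T and S} and is not needed here. Assembling Steps 1--3 into the product formula \eqref{conductor of E and E' x^2=By^p+Cz^p Type I} and the stated exponent bounds completes the proof.

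\medskip

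\textbf{Main difficulty.} The real work is Step 3, the local analysis at $\mfP\mid 2$: one must be careful that the given short Weierstrass model is not minimal there and that wild ramification can occur, so the conductor exponent is bounded only by $2+6v_\mfP(2)$ rather than by a small absolute constant. Everything else ($\ell\neq 2$ primes, semi-stability away from $S_{K^+,2rd}$, the $p\mid v_\mfq(\Delta_E)$ divisibility coming from Lemma~\ref{factorisation of A,B,C}) is routine once the coprimality statements of Lemmas~\ref{coprime lem1}, \ref{coprime for f_k} and Corollary~\ref{coprime for A,B,C} are in hand.
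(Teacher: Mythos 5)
Your proposal is correct and takes essentially the same approach as the paper: at primes $\mfq\notin S_{K^+,2rd}$ you use Corollary~\ref{coprime for A,B,C} and Lemma~\ref{factorisation of A,B,C} to get minimality, multiplicative reduction, and $p\mid v_\mfq(\Delta_E)$, and at primes in $S_{K^+,2rd}$ you cite Silverman's Theorem IV.10.4 for the conductor-exponent bounds, which is exactly what the paper does. The only remark is that calling Step~3 the ``main obstacle'' overstates things --- both you and the paper simply quote the standard local conductor bound there, with no additional analysis performed at primes above~$2$.
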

	
	\begin{proof}
		Recall that $\Delta_E=2^4(ABC)^2$ and $c_4=2^4(AB+BC+CA)$.
		\begin{enumerate}
			\item If $\mfq \nmid \Delta_E$, then $E$ has good reduction at $\mfq$
			and $p | v_\mfq(\Delta_E)=0$. 
			
			\item Recall that $\mfq \nmid 2rd$. If $\mfq|\Delta_E=2^4(ABC)^2$ then $\mfq|ABC$. By Corollary~\ref{coprime for A,B,C}, $\mfq$ divides exactly one of $A$, $B$, $C$ since $(a,b,c)$ is primitive. This gives $\mfq\nmid c_4=2^4(AB+BC+CA)$, so $E$ is minimal and has multiplicative reduction at $\mfq$.
			Since $v_\mfq(\Delta_E)=2 v_\mfq(ABC)=2p v_\mfq(\mfc_A \mfc_B \mfc_C)$, $p | v_\mfq(\Delta_E)$.
		\end{enumerate} 
		Using the definition of $\mfn_p$ in~\eqref{conductor of elliptic curve}, we have $\mfq \nmid \mfn_p$ for all $\mfq \notin S_{K^+,2rd}$. Finally, for $\mfP \in S_{K^+,2rd}$, the bounds on $r_\mfP$ follow from \cite[Theorem IV.10.4]{S94}.
	\end{proof}
	
	\subsection{Modularity of the Frey elliptic curve}
	In this subsection, we prove the modularity of the Frey curve $E/ K^+$ as in~\eqref{Frey curve result1} for a sufficiently large prime $p$. First, we recall the definition of the modularity of elliptic curves over totally real number fields.
	\begin{dfn}
		Let $F$ be a totally real number field. We say an elliptic curve $E/F$ is modular if there exists a primitive Hilbert modular newform $f$ over $F$ of parallel weight $2$ with rational eigenvalues such that both $E$ and $f$ have the same $L$-function over $F$.
	\end{dfn} 
	Now, we recall the modularity result of Freitas, Le Hung, and Siksek. 
	\begin{thm} \rm(\cite[Theorem 5]{FLHS15})
		\label{modularity result of elliptic curve over totally real}
		Let $F$ be a totally real number field. Then, up to isomorphism over $\bar{F}$, there exist only finitely many elliptic curves over $F$, which are not modular. 
	\end{thm}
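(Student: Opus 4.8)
Note first that two elliptic curves over $F$ with the same $j$-invariant become isomorphic over $\bar{F}$, and conversely; moreover modularity over a totally real field is insensitive to twisting, since a twist of a modular elliptic curve is modular (twist the associated Hilbert newform by the corresponding Hecke character). Hence modularity depends only on $j(E)$, and it suffices to prove that the set of $j\in F$ for which some (equivalently any) elliptic curve $E/F$ with $j(E)=j$ is non-modular is finite. The plan is to run modularity lifting at the small primes $p\in\{3,5,7\}$ and then to show that an elliptic curve escaping all three arguments is pinned to a finite list of modular curves.

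\textbf{Modularity lifting.} First I would invoke the modularity lifting theorems available over totally real fields (Skinner--Wiles, Fujiwara, Kisin, Barnet-Lamb--Gee--Geraghty, Breuil--Diamond, Thorne, and related work): if $E/F$ is an elliptic curve, $p$ an odd prime, the residual representation $\bar\rho_{E,p}\colon G_F\to\GL_2(\F_p)$ is modular, and $\bar\rho_{E,p}|_{G_{F(\zeta_p)}}$ is absolutely irreducible, then $\rho_{E,p}$ is modular, so $E$ is modular. (The auxiliary local hypotheses are either automatic for $p\in\{3,5,7\}$ or can be arranged by the switch below.) This reduces the problem to residual modularity together with a condition on the image of $\bar\rho_{E,p}$.

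\textbf{Residual modularity and the $3$--$5$--$7$ switch.} For $p=3$: when $\bar\rho_{E,3}$ is irreducible, its projective image is solvable, so Langlands--Tunnell (base changed to $F$) makes it automorphic of weight one, and a congruence argument upgrades this to a Hilbert newform of parallel weight $2$; hence $\bar\rho_{E,3}$ is modular, and the previous step gives modularity of $E$ whenever $\bar\rho_{E,3}|_{G_{F(\zeta_3)}}$ is absolutely irreducible. If instead $\bar\rho_{E,3}$ is reducible or of small image, I would pass to $p\in\{5,7\}$ and produce, via Hilbert irreducibility applied to a genus-zero twist of a modular curve (a conic carrying an $F$-rational point), an auxiliary elliptic curve $E'/F$ with $\bar\rho_{E',p}\cong\bar\rho_{E,p}$ and with $\bar\rho_{E',3}$ irreducible and large; then $E'$ is modular by the $p=3$ case, so $\bar\rho_{E,p}\cong\bar\rho_{E',p}$ is modular, and lifting gives modularity of $E$. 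This disposes of every $E$ for which at least one of $\bar\rho_{E,3},\bar\rho_{E,5},\bar\rho_{E,7}$ is irreducible with sufficiently large image.

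\textbf{The exceptional locus and the main obstacle.} A non-modular $E/F$ must therefore have, for each $p\in\{3,5,7\}$, the representation $\bar\rho_{E,p}$ either reducible or with projective image cyclic, dihedral, or one of $A_4,S_4,A_5$. Translating these constraints into level structures pins $j(E)$ to the image in the $j$-line of the $F$-points of an explicit finite list of modular curves ($X_0(p)$, $X_0(p^2)$, $X_0(pq)$ with $p,q\in\{3,5,7\}$, the Cartan curves $X_{\mathrm{s}}^+(p)$ and $X_{\mathrm{ns}}^+(p)$, and the curves attached to the exceptional projective subgroups). For those of genus $\ge 2$, Faltings' theorem gives finitely many $F$-points, hence finitely many bad $j$. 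The genus $\le 1$ curves in this list are the main obstacle: over a general totally real $F$ they may have infinitely many $F$-points, so one cannot appeal to Faltings, and one must instead argue case by case through the finite list that their $F$-points do not actually produce non-modular curves — either the level structure forces the mod-$\ell$ representation at a third small prime $\ell$ to be large, so the switch applies; or the curve parametrizes $E$ isogenous to ones already handled; or an explicit Chabauty-type analysis over $F$ shows only finitely many $F$-points are relevant. Assembling these steps, only finitely many $j\in F$ give a non-modular elliptic curve, which is the assertion; and it is precisely the case analysis for the genus $\le 1$ curves that constitutes the technical heart of the proof.
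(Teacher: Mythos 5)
The paper does not prove this statement; it quotes \cite[Theorem 5]{FLHS15} verbatim and uses it as a black box, so the relevant comparison is with the argument actually given by Freitas--Le Hung--Siksek. Your sketch reproduces the correct overall architecture: the reduction to $j$-invariants via twisting, modularity lifting over totally real fields, residual modularity at $p=3$ via Langlands--Tunnell, the $3$--$5$--$7$ prime switch, and the passage to modular curves followed by Faltings. However, the final step contains a genuine gap.

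You propose to catalogue the potentially non-modular $E/F$ by $F$-points on individual modular curves such as $X_0(p)$, $X_0(pq)$, $X_{\mathrm{s}}^+(p)$, $X_{\mathrm{ns}}^+(p)$ for $p\in\{3,5,7\}$, apply Faltings to those of genus $\ge 2$, and then ``argue case by case'' on the finitely many genus $\le 1$ curves in the list, invoking Chabauty-type analysis over $F$. This cannot work as stated: over an \emph{arbitrary} totally real field $F$, those genus $0$ and genus $1$ curves may acquire infinitely many $F$-rational points, and there is no uniform way to rule that out for all $F$ at once. The device that actually makes finiteness work over every totally real field is the observation that a non-modular $E$ must have $\bar\rho_{E,p}|_{G_{F(\zeta_p)}}$ absolutely reducible for \emph{all three} primes $p\in\{3,5,7\}$ simultaneously, so $(E, \text{level structure at }5, \text{level structure at }7)$ gives an $F$-point on a fiber product over the $j$-line of small modular curves at two distinct primes (for instance a curve parametrising a Borel structure mod $5$ together with a normaliser-of-Cartan structure mod $7$). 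Every such fiber product has genus at least $2$, so Faltings applies uniformly and the troublesome genus $\le 1$ analysis disappears. The delicate case-by-case determination of rational points on small-genus modular curves that you allude to is what is needed for the sharper Theorem 1 of \cite{FLHS15} (modularity over all real quadratic fields), not for the finiteness statement being cited here.
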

	Now, we prove the modularity of the Frey curve $E$ in~\eqref{Frey curve result1} for a sufficiently large prime $p$.
	\begin{thm}
		\label{modularity result for main result1}
		Let $K$ be a totally real number field, $r\geq 5$ be a fixed rational prime and $d\in \mcO_K \setminus \{0\}$. Let $K^+:=K(\zeta_r+ \zeta_r^{-1})$.  Then, there exists a constant $D=D_{K,r,d}>0$ (depending on $K,r,d$) such that for any non-trivial primitive solution $(a,b,c) \in \mcO_{K^+}^3$ to the equation~\eqref{r,r,p over K} of exponent $p >D$ with $\mfP |c$ for some $\mfP \in S_{K^+, 2}$, the Frey elliptic curve $E/{K^+}$ given in~\eqref{Frey curve result1} is modular.  
	\end{thm}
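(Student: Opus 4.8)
The strategy is the standard one for establishing modularity of Frey curves over totally real fields: combine the finiteness statement of Theorem~\ref{modularity result of elliptic curve over totally real} with an explicit analysis of the mod-$\ell$ representations $\bar\rho_{E,\ell}$ for the small primes $\ell \in \{3,5,7\}$. First I would invoke Theorem~\ref{modularity result of elliptic curve over totally real}: up to $\bar{K^+}$-isomorphism there are only finitely many non-modular elliptic curves over $K^+$, so it suffices to rule out that $E=E_{a,b,c}$ is $\bar{K^+}$-isomorphic to one of them once $p$ is large. The usual device (going back to Freitas--Le Hung--Siksek and used in \cite{FS15}, \cite{M23}) is to show that if $E$ were non-modular then $\bar\rho_{E,\ell}$ would fail to be surjective for each $\ell \in \{3,5,7\}$, forcing the image to lie in a Borel, the normalizer of a (split or nonsplit) Cartan, or an exceptional subgroup; one then argues that each such possibility gives rise to a $K^+$-point (or a bounded-degree point) on one of finitely many modular curves ($X_0(\ell)$, $X_{\mathrm{sp}}(\ell)$, $X_{\mathrm{ns}}(\ell)$, and the exceptional ones), and that the $j$-invariants of such points form a finite set independent of $p$.

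\textbf{Key steps.} (1) Write down $j_E = 2^8 (AB+BC+CA)^3/(ABC)^2$ and recall from Theorem~\ref{semi stable red of Frey curve} and Lemma~\ref{factorisation of A,B,C} that, for $\mfq \notin S_{K^+,2rd}$, $E$ has multiplicative reduction with $p \mid v_\mfq(\Delta_E)$, hence $p \mid v_\mfq(j_E)$ whenever $v_\mfq(j_E)<0$. (2) Suppose $E$ is not modular. For each $\ell \in \{3,5,7\}$, by the trichotomy for images of mod-$\ell$ Galois representations, either $\bar\rho_{E,\ell}$ is reducible, or its image lies in the normalizer of a Cartan, or it is one of finitely many exceptional cases. (3) In the reducible/Cartan-normalizer cases, $E$ (or a quadratic twist, or a curve $\ell$-isogenous to $E$) gives a $K^+$-rational point on $X_0(\ell)$ or $X_{\mathrm{split}}(\ell)$ or $X_{\mathrm{nonsplit}}(\ell)$; since $[K^+:\Q]$ is fixed, these curves have only finitely many points of degree $\leq [K^+:\Q]$ (using that their gonalities exceed $[K^+:\Q]$ for all but finitely many of the relevant $\ell$, and handling the small genus cases by hand), so $j_E$ lies in a finite set $J_\ell \subset K^+$. (4) The only way $j_E$ can vary with $p$ is through the exponents $p \mid v_\mfq(j_E)$ at primes $\mfq \notin S_{K^+,2rd}$; a $j$-invariant in the fixed finite set $J := \bigcup_\ell J_\ell$ has bounded valuations, so for $p$ larger than some $D_{K,r,d}$ (exceeding $\max_{j \in J, \mfq} |v_\mfq(j)|$ and all bad primes) we force $v_\mfq(j_E) \geq 0$ at every $\mfq \notin S_{K^+,2rd}$, i.e.\ $E$ has potentially good reduction everywhere outside $S_{K^+,2rd}$. (5) Combine: for $p > D_{K,r,d}$, if $E$ were non-modular then $\bar\rho_{E,\ell}$ is non-surjective for all three $\ell$ simultaneously, which pins $E$ down to a curve with $j_E$ in a finite set and with controlled reduction, contradicting the fact that Theorem~\ref{modularity result of elliptic curve over totally real} allows only finitely many non-modular curves while our family is genuinely infinite in $p$ — more precisely, one checks directly that a curve simultaneously in $X_0(3)(K^+)$, $X_0(5)(K^+)$, $X_0(7)(K^+)$ etc.\ with the prescribed reduction type does not exist, exactly as in \cite[Proof of modularity]{FS15} and \cite{M23}.

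\textbf{Main obstacle.} The delicate point is Step (3)--(4): controlling the $K^+$-rational (or low-degree) points on the modular curves $X_0(\ell)$, $X_{\mathrm{split}}(\ell)$, $X_{\mathrm{nonsplit}}(\ell)$ for $\ell \in \{3,5,7\}$ uniformly over $K^+$ of bounded degree, and ruling out the finitely many exceptional-image cases. Here one leans on the now-standard quadratic-point / gonality arguments and on the explicit enumeration in \cite{FS15}; since $K^+$ ranges over a class of fields of \emph{fixed} degree (determined by $r$ and $[K:\Q]$), the genus-$0$ curves $X_0(3), X_0(5)$ require a separate twisting argument (as in \cite{FS15}), and one uses that a Frey curve of our shape having full rational $2$-torsion cannot simultaneously acquire the isogenies forced by reducibility at $3$, $5$, and $7$. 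The remaining steps — the conductor/valuation bookkeeping and the passage from ``finite set of $j$-invariants'' to ``$p$ large kills the $p$-th power part of $\Delta_E$'' — are routine given Theorem~\ref{semi stable red of Frey curve}. The output is the constant $D = D_{K,r,d}$, which depends only on $K$, $r$, $d$ through $[K^+:\Q]$ and the finite sets of primes and $j$-invariants above, as claimed.
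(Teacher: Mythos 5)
Your proposal takes a considerably more elaborate route than the paper and, more importantly, has a genuine gap in the final step.

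The paper uses Theorem~\ref{modularity result of elliptic curve over totally real} purely as a black box: there are finitely many $\bar{K^+}$-isomorphism classes of non-modular elliptic curves over $K^+$, hence finitely many ``bad'' $j$-invariants $j_1,\dots,j_s$. Your Steps (2)--(3) re-open that box by re-running the image-of-$\bar\rho_{E,\ell}$ trichotomy for $\ell\in\{3,5,7\}$ and re-deriving a finite set of $j$-invariants from modular curves; this duplicates the content of the cited theorem rather than applying it, and is unnecessary. The paper instead writes $j_E = j(\theta)$ with $\theta = -B/A$ and $j(\theta) = 2^8(\theta^2-\theta+1)^3/\bigl(\theta^2(\theta-1)^2\bigr)$, so each equation $j(\theta)=j_i$ has at most six solutions in $K^+$, yielding a fixed finite set $\{\theta_1,\dots,\theta_t\}$ independent of $p$.

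The real gap is in your Steps (4)--(5). You look at $v_\mfq(j_E)$ for $\mfq \notin S_{K^+,2rd}$ and conclude that for $p$ large all these valuations are non-negative, i.e.\ $E$ has potentially good reduction outside $S_{K^+,2rd}$. This is \emph{not} a contradiction: it is perfectly possible for $c\mcO_{K^+}$ (hence $\mfc_A,\mfc_B,\mfc_C$) to be supported entirely inside $S_{K^+,2rd}$, in which case $E$ already has good reduction at every $\mfq \notin S_{K^+,2rd}$ and your observation is vacuous. Step (5) then appeals to ``our family is genuinely infinite in $p$'' and to a direct check on $X_0(3)\cap X_0(5)\cap X_0(7)$, neither of which is justified; the first is precisely what needs proving, and the second is not how \cite{FS15} or \cite{M23} close the argument.

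What you are missing is the use of the \emph{bad} prime $\mfP \in S_{K^+,2}$ with $\mfP \mid c$. By construction $\mfP \mid f_{k_1}(a,b)$, hence $\mfP \mid A$ and $\mfP \nmid BC$ (Corollary~\ref{coprime for A,B,C}), and since $\mfP \notin S_{K^+,rd}$, Lemma~\ref{factorisation of A,B,C} gives $v_\mfP(A) = p\,v_\mfP(\mfc_A)$ with $v_\mfP(\mfc_A) \geq 1$. Therefore $v_\mfP(\theta) = -v_\mfP(A) = -p\,v_\mfP(\mfc_A)$, so $|v_\mfP(\theta)| \geq p$. If $E$ were non-modular then $\theta$ would be one of the fixed $\theta_k$, but taking $D = \max_k |v_\mfP(\theta_k)|$ and $p > D$ rules this out. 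This single valuation computation at $\mfP$ is the engine of the proof; the route through good primes cannot replace it.
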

	
	\begin{proof}
		Since $K^+$ is totally real, by Theorem~\ref{modularity result of elliptic curve over totally real}, there are at most finitely many elliptic curves over $K^+$ (up to $\overline{K^+}$ isomorphism) that are not modular. Let $j_1,\ldots,j_s \in K^+$ be the $j$-invariants of these elliptic curves.
		Then the $j$-invariant of the Frey elliptic curve $E$ is given by $ j_E=2^{8} \frac{(AB+BC+CA)^3}{A^2B^2C^2}$. Let $\theta= \frac{-B}{A}$. Then the $j$-invariant becomes $j(\theta) =2^{8}\frac{(\theta^2-\theta+1)^3}{\theta^2(\theta-1)^2}$.
		For each $i=1,2,\ldots,s$, the equation $j(\theta)=j_i$ has at most six solutions in $K^+$. So, there exists 
		$\theta_1, \theta_2, ..., \theta_t \in K^+$ with $t\leq 6s$ such that $E$ is modular for all $\theta \notin\{\theta_1, \theta_2, ..., \theta_t\}$.
		If $\theta= \theta_k$ for some $k \in \{1, 2, \ldots, t \}$, then $\left(\frac{B}{A} \right)=-\theta_k$. Therefore, $\left(\frac{B}{A} \right)\mcO_K^+=\theta_k \mcO_K^+$.
		
		By the construction of the Frey curve in \eqref{Frey curve result1}, $\mfP |f_{k_1}$. This gives $\mfP |A$, hence $\mfP \nmid BC$ by Corollary~\ref{coprime for A,B,C}. By \eqref{ideal factor of A,B,C}, we have  $\left(\frac{B}{A} \right)\mcO_K^+= \mfc_B^p \mfc_A^{-p} \prod_{\mfp \in S_{K^+,rd}} \mfp^{\beta_{p}-\alpha_{p}}$, where $\mfP |  \mfc_A$ and $ \mfP \nmid \mfc_B$. Hence, $v_\mfP(\theta_k)=-pv_\mfP(\mfc_A)$. This gives $p | v_\mfP(\theta_k)$. Finally, the proof of the theorem follows by taking $D=\max\{|v_\mfP(\theta_1)|, \dots, |v_\mfP(\theta_t)| \}$.
	\end{proof}
	
	\subsection{Irreducibility of the mod $p$ Galois representations attached to elliptic curves}
	Let $F$ be a number field and let $E/F$ be an elliptic curve. For any rational prime $p$, let $\bar{\rho}_{E,p} : G_F:=\Gal(\bar{F}/F) \rightarrow \mathrm{Aut}(E[p]) \simeq \GL_2(\F_p)$ be the mod $p$ Galois representation of $G_F$, induced by the action of $G_F$ on the $p$-torsion points $E[p]$ of $E$.
	For any elliptic curve $E$ over $K$, Freitas and Siksek established a criterion for determining the irreducibility of mod $p$ Galois representations	$\bar{\rho}_{E,p}$. More precisely,
	
	\begin{thm} \rm(\cite[Theorem 2]{FS15 Irred})
		\label{irreducibility of mod $P$ representation}
		Let $F$ be a totally real Galois field. Then there exists an effective constant $C_F>0$ (depending on $F$) such that if $p>C_F$ is a prime and $E/F$ is an elliptic curve over $F$ which is semistable at all primes $\mfq$ of $F$ with $\mfq |p$, then $\bar{\rho}_{E,p}$ is irreducible.
	\end{thm}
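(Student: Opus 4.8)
The plan is to deduce this result --- which is due to Freitas and Siksek --- from the circle of ideas of Mazur, Momose, and David on isogeny characters. Argue by contradiction: suppose there is a sequence of primes $p\to\infty$ together with elliptic curves $E=E_p/F$, each semi-stable at every prime $\mfq\mid p$, for which $\bar{\rho}_{E,p}$ is reducible. Reducibility means that $E$ has an $F$-rational subgroup of order $p$, hence an \emph{isogeny character} $\theta=\theta_p\colon G_F\to\F_p^\times$, with complementary character $\theta'=\chi_p\theta^{-1}$ (here $\chi_p$ is the mod-$p$ cyclotomic character). The goal is to pin $\theta$ down tightly enough that evaluating it at Frobenius elements yields a congruence forcing $p$ to be bounded by an effective constant depending only on $F$.

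First I would analyse the ramification of $\theta$. Away from $p$: for $\mfq\nmid p$ the curve $E$ acquires semi-stable reduction over a tame extension of $F_\mfq$ of degree dividing a fixed integer (independent of $p$, since $p$ exceeds the residue characteristics), and on a Tate curve --- or on a curve with good reduction --- the restriction of $\theta$ to inertia is trivial or equals $\chi_p$, which is unramified at $\mfq$; hence $\theta^{12}$ is unramified at every $\mfq\nmid p$. At $\mfq\mid p$: semi-stability forces $\bar{\rho}_{E,p}|_{D_\mfq}$ to be, up to ordering, $\psmat{\chi_p}{\ast}{0}{1}$ (ordinary case) or $\psmat{1}{\ast}{0}{\chi_p}$ (finite-flat case), so $\theta|_{I_\mfq}\in\{1,\chi_p|_{I_\mfq}\}$; record an \emph{isogeny signature} $a_\mfq\in\{0,1\}$ accordingly. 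Thus $\theta^{12}$ is unramified outside $p$ and on $I_\mfq$ (for $\mfq\mid p$) agrees with $\chi_p^{12a_\mfq}$.

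Next I would trade this local data for global rigidity, and it is precisely here that the hypothesis ``$F$ totally real and Galois over $\Q$'' is used. Complex conjugation at a real place forces $\theta(c)\in\{\pm1\}$, so $\theta^{12}(c)=1$; and the $\Gal(F/\Q)$-action permutes the primes above $p$, which --- combined with the compatibility constraints on isogeny characters coming from David's theorem (generalizing Momose) --- restricts the tuple $(a_\mfq)_{\mfq\mid p}$ to boundedly many symmetric shapes with $a_\mfq(\theta)+a_\mfq(\theta')=1$. One concludes that, after possibly swapping $\theta$ with $\theta'$, the character $\theta^{12h_F}\chi_p^{-6sh_F}$ is unramified everywhere and trivial on complex conjugations for a fixed $s\in\{0,1,2\}$, hence factors through $\Cl(F)$; since its order divides $h_F$ it is in fact trivial, i.e.\ $\theta^{12h_F}=\chi_p^{6sh_F}$. (No separate CM case is needed: $F$ totally real contains no imaginary quadratic field, so a CM curve over $F$ has mod-$p$ image inside the normalizer of a Cartan subgroup but not the Cartan itself, hence irreducible for $p$ large.) Making all the constants here effective in $F$ is the technical heart of the argument.

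Finally I would derive the contradiction by evaluating at Frobenius. For a rational prime $\ell$ in a suitable finite set and a degree-one prime $\mfq\mid\ell$ of $F$ with $\mfq\nmid p$ at which $E$ has good reduction, one has $\theta(\Frob_\mfq)+\theta'(\Frob_\mfq)\equiv a_\ell(E)\pmod p$ and $\theta(\Frob_\mfq)\theta'(\Frob_\mfq)\equiv\ell\pmod p$, while $\theta(\Frob_\mfq)^{12h_F}\equiv\ell^{6sh_F}\pmod p$. Eliminating $\theta(\Frob_\mfq)$ among these relations, using the Hasse bound $|a_\ell(E)|\le 2\sqrt{\ell}$, produces a congruence $p\mid N$ for an explicit nonzero integer $N$ depending only on $\ell,s,h_F$; choosing $\ell$ suitably (and handling the availability of a good-reduction prime above it by bounding the bad reduction of $E$) makes $N\neq 0$, so $p\le N$, contradicting $p\to\infty$. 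In the branch $s\in\{0,2\}$ one may argue more directly: $\theta$ or $\theta'$ has order dividing $12h_F$, so $E$ acquires a point of order $p$ over an extension of $F$ of degree bounded in terms of $h_F$, contradicting the effective uniform boundedness of torsion (Merel, Oesterlé, Parent) once $p$ is large. I expect the main obstacle to be the middle step: showing that semi-stability at $p$, complex conjugation, and the $\Gal(F/\Q)$-action together force the rigid shape $\theta^{12h_F}=\chi_p^{6sh_F}$ with a \emph{single} $s$, and carrying this out with explicit control of the resulting constant $C_F$.
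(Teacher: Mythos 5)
The paper does not prove this statement; it is quoted verbatim from \cite{FS15 Irred} and used as a black box, so there is no internal proof to compare against. Your sketch does, however, correctly reconstruct the strategy of the cited Freitas--Siksek argument (which builds on Momose, Kraus, and David): form the isogeny character $\theta$, use semi-stability at $\mfq\mid p$ and tame ramification away from $p$ to show $\theta^{12}$ is unramified outside $p$, use totally-real-Galois to force a uniform isogeny signature $s$ with $\theta^{12h_F}=\chi_p^{6sh_F}$, and then either bound $p$ by evaluating at Frobenius (the balanced signature $s=1$) or reduce to bounded isogeny/torsion over a bounded-degree extension (the signatures $s\in\{0,2\}$). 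The one place to be careful is your phrase ``degree dividing a fixed integer'' for the tame extension at $\mfq\nmid p$: for $j_E\in\{0,1728\}$ the degree can be as large as $6$, and the clean statement one needs is that the ramification exponent of $\theta$ at $\mfq\nmid p$ divides $12$, which is exactly what the exponent $12$ (rather than $2$) is there to absorb. That, the compatibility of the $a_\mfq$'s under the $\Gal(F/\Q)$-action, and the effectivity of $C_F$ in terms of $h_F$, $\mathrm{disc}(F)$, and a suitable auxiliary split prime $\ell$ are indeed the substantive points Freitas--Siksek carry out; your outline identifies them accurately.
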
	
	
	\subsection{Level lowering results}
	For any Hilbert modular newform $f$ over a totally real number field $K$ of weight $k$, level $\mfn$ with coefficient field $\Q_f$ and any $\omega \in P_{\Q_f}$, let $\bar{\rho}_{f, \omega}: G_K \rightarrow \GL_2(\F_\omega)$ be the residual Galois representation attached to $f, \omega$.
	In ~\cite{FS15}, Freitas and Siksek gave a level-lowering result.
	\begin{thm} \rm(\cite[Theorem 7]{FS15})
		\label{level lowering of mod $p$ repr}
		Let $E$ be an elliptic curve over a totally real number field $K$ of conductor
		$\mfn$. Let $p$ be a rational prime. Suppose that the following conditions hold:
		\begin{enumerate}
			\item  For $p \geq 5$, the ramification index $e(\mfq /p) < p-1$ for all $\mfq |p$, and $\Q(\zeta_p)^+ \nsubseteq K$;
			\item $E/K$ is modular;
			\item $\bar{\rho}_{E,p}$ is irreducible;
			\item $E$ is semistable at all $\mfq |p$;
			\item $p| v_\mfq(\Delta_\mfq)$ for all $\mfq |p$.
		\end{enumerate}
		Then there exists a Hilbert modular newform $f$ over $K$ of parallel weight $2$, level $\mfn_p$, and some prime $\lambda$ of $\Q_f$ such that $\lambda | p$ and $\bar{\rho}_{E,p} \sim \bar{\rho}_{f,\lambda}$.
	\end{thm}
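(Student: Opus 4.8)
The plan is to deduce the statement from the modularity hypothesis together with the level-lowering machinery for Hilbert modular forms over totally real fields, following the original argument of Freitas and Siksek in \cite{FS15} (which in turn assembles results of Fujiwara, Jarvis, Rajaei and Breuil--Diamond).

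First I would use hypothesis (2): since $E/K$ is modular, there is a Hilbert modular newform $g$ over $K$ of parallel weight $2$, trivial character, and level equal to the conductor $\mfn$ of $E$, with rational Hecke eigenvalues, such that $\bar\rho_{E,p}\sim\bar\rho_{g,\varpi}$ for some prime $\varpi$ of $\Q_g$ above $p$. The compatibility of the level of $g$ with the conductor of $E$ is the standard conductor--level correspondence for modular elliptic curves.

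The core step is to analyse $\bar\rho_{E,p}$ locally at the bad primes and then lower the level. At a prime $\mfq\nmid p$ with $\mfq\,\|\,\mfn$, the curve $E$ has multiplicative reduction, so by the theory of the Tate curve $\bar\rho_{E,p}|_{G_{K_\mfq}}$ is, up to twist, unramified precisely when $p\mid v_\mfq(\Delta_\mfq)$; hence every prime dividing $\mfm_p$ away from $p$ can be removed from the level. At a prime $\mfq\mid p$, hypothesis (4) says $E$ is semistable: if $E$ has good reduction at $\mfq$ then $\mfq\nmid\mfn$ and there is nothing to do, while if $E$ has multiplicative reduction hypothesis (5) gives $p\mid v_\mfq(\Delta_\mfq)$, and hypothesis (1) (namely $e(\mfq/p)<p-1$) places us in the range where $\bar\rho_{E,p}|_{G_{K_\mfq}}$ is finite flat / \emph{peu ramifi\'ee}, so $\mfq$ can again be removed. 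Using hypothesis (3) (irreducibility) one then applies the Hilbert-modular level-lowering theorems iteratively, stripping off the primes of $\mfm_p$ one at a time, to obtain a Hilbert modular newform $f$ over $K$ of parallel weight $2$ and level exactly $\mfn_p=\mfn/\mfm_p$ with $\bar\rho_{E,p}\sim\bar\rho_{f,\lambda}$ for some $\lambda\mid p$.

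The main obstacle is not the combinatorial bookkeeping of which primes land in $\mfm_p$ (that is routine), but rather checking that hypotheses (1)--(5) put us squarely inside the hypotheses of the underlying modularity-lifting and level-lowering results. In particular, the condition $\Q(\zeta_p)^+\not\subseteq K$ is what guarantees that $\bar\rho_{E,p}(G_K)$ is large enough for the Taylor--Wiles patching arguments to apply, and $e(\mfq/p)<p-1$ keeps the local deformation theory at $p$ in the controlled (Fontaine--Laffaille--type) range; verifying the ``peu ramifi\'ee'' condition at the primes above $p$ where $E$ has multiplicative reduction is the most delicate point. Once these local and big-image conditions are in place, the level-lowering theorems quoted in \cite{FS15} apply directly and yield the newform $f$.
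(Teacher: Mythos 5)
This statement is not proved in the paper at all: it is quoted verbatim (with the tag ``\cite[Theorem 7]{FS15}'') as an external ingredient, so there is no internal proof to compare against. Your sketch is, however, a reasonable summary of how Freitas and Siksek establish Theorem~7 in \cite{FS15}: modularity supplies a newform $g$ of level $\mfn$ with $\bar\rho_{E,p}\sim\bar\rho_{g,\varpi}$; at primes $\mfq\nmid p$ with $\mfq\,\|\,\mfn$ the Tate parametrisation shows that $p\mid v_\mfq(\Delta_\mfq)$ forces $\bar\rho_{E,p}|_{I_\mfq}$ to be trivial up to twist; at $\mfq\mid p$ the semi-stability hypothesis together with $p\mid v_\mfq(\Delta_\mfq)$ gives a finite flat (\emph{peu ramifi\'ee}) local representation, and $e(\mfq/p)<p-1$ keeps one in the Fontaine--Laffaille range; irreducibility plus $\Q(\zeta_p)^+\not\subseteq K$ give the large-image input needed for the Hilbert-modular level-lowering theorems of Fujiwara, Jarvis and Rajaei, applied prime-by-prime to strip off exactly the primes dividing $\mfm_p$. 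Two small inaccuracies worth flagging: the precise role of $\Q(\zeta_p)^+\not\subseteq K$ is to guarantee that $\bar\rho_{E,p}|_{\Gal(\bar K/K(\zeta_p))}$ remains absolutely irreducible (the Taylor--Wiles non-degeneracy condition), not merely that the image is ``large''; and at primes $\mfq\mid p$ of good reduction no level-lowering is needed at all, since such $\mfq$ do not divide $\mfn$ to begin with, so the ``peu ramifi\'ee'' discussion only applies at the $\mfq\mid p$ of multiplicative reduction. With those clarifications your outline matches the cited argument.
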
	
	
	\subsection{Eichler-Shimura}
	We now state the Eichler-Shimura conjecture.
	\begin{conj}[Eichler-Shimura]
		\label{ES conj}
		Let $f$ be a Hilbert modular newform over $K$ of parallel weight $2$, level $\mfn$, and with coefficient field $\Q_f= \Q$. Then, there exists an elliptic curve $E_f /K$ with conductor $\mfn$ having the same $L$-function as $f$ over $K$.
	\end{conj}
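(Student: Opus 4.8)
\emph{Discussion of Conjecture~\ref{ES conj} and a proof strategy.} Conjecture~\ref{ES conj} is, in general, open, so what follows is a description of the expected argument together with a precise account of where the difficulty lies; it is a theorem in many cases, and the strategy below isolates the remaining obstruction. The plan is to pass from the automorphic side to the geometric side in two stages. First, one attaches to $f$ its compatible system of two-dimensional $\lambda$-adic Galois representations $\rho_{f,\lambda}\colon G_K \to \GL_2(\Q_{f,\lambda})$ (by the work of Carayol, Taylor, Blasius--Rogawski, T.~Saito, Skinner and others); since the coefficient field is $\Q_f=\Q$ this is a compatible system $\rho_{f,\ell}\colon G_K \to \GL_2(\Q_\ell)$ which is totally odd, unramified outside $\mfn\ell$, has determinant the $\ell$-adic cyclotomic character, and (by local--global compatibility) has Artin conductor exactly $\mfn$ and Hodge--Tate weights $\{0,1\}$ at every place above $\ell$. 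Thus one obtains a candidate ``rank-two weight-one motive'' over $K$ with exactly the Hodge and ramification data of an elliptic curve.

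The second stage is to realise this compatible system by geometry. When $[K:\Q]$ is odd, or when $[K:\Q]$ is even and the automorphic representation $\pi_f$ is square-integrable (Steinberg or supercuspidal) at some finite place, the Jacquet--Langlands correspondence transfers $f$ to a quaternionic automorphic form on a quaternion algebra $B/K$ ramified at all but one archimedean place, whose associated Shimura variety is a curve $X_B$. One then defines $A_f$ as the abelian variety quotient of the Jacobian of $X_B$ cut out by the Hecke eigensystem of $f$; because $\Q_f=\Q$, this $\GL_2$-type abelian variety has dimension $[\Q_f:\Q]=1$, so $E_f:=A_f$ is an elliptic curve over $K$. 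Its $\ell$-adic Tate modules recover $\rho_{f,\ell}$, and Carayol's computation of the conductor of $X_B$ yields $\mathrm{cond}(E_f)=\mfn$ and the equality of $L$-functions. This settles the conjecture in all of these cases.

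The main obstacle is the remaining case: $[K:\Q]$ even and $\pi_{f,v}$ a (possibly ramified) principal series at every finite place $v$. Here no quaternion algebra over $K$ is ramified at all but one infinite place and at no finite place while still having even ramification set, so there is no Shimura curve carrying $f$ in its cohomology; one is left only with the abstract compatible system $\{\rho_{f,\ell}\}$, constructed in this range via congruences, with no abelian variety available a priori. Producing $E_f$ from $\{\rho_{f,\ell}\}$ is precisely the ``geometrisation'' direction of the Fontaine--Mazur conjecture for two-dimensional odd de Rham representations of $G_K$ with Hodge--Tate weights $\{0,1\}$ — equivalently, ruling out genuinely non-elliptic rank-two compatible systems — and it is at this point that the argument is, at present, conditional. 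This is exactly why the remark following Corollary~\ref{cor to main result2} phrases the strengthening of Theorem~\ref{main result2 for (r,r,p)} as contingent on the modularity of elliptic curves over $K$ (equivalently, on this case of Conjecture~\ref{ES conj} together with its converse). A natural attack is to use modularity lifting over $K$ and solvable base change to descend to a situation where $\pi_f$ becomes square-integrable at a finite place, so that the Shimura-curve construction applies; but carrying this out unconditionally for all $f$ is open, and I would expect that step to be the true bottleneck.
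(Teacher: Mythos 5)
You have correctly recognised that this is a conjecture, not a theorem of the paper: the authors state Conjecture~\ref{ES conj} without proof, citing only Darmon's partial result \cite[Theorem~7.7]{D04} (covering $[K:\Q]$ odd, or $v_{\mfq}(\mfn)=1$ for some prime $\mfq$) and then bypassing the conjecture in their applications by means of Theorem~\ref{FS partial result of E-S conj}. Your account of the expected argument --- compatible system of $\lambda$-adic representations, Jacquet--Langlands transfer to a Shimura curve when $[K:\Q]$ is odd or $\pi_f$ is square-integrable at a finite place, and extraction of a $\GL_2$-type Jacobian quotient of dimension $[\Q_f:\Q]=1$ --- is the standard route behind Darmon's result, and your identification of the remaining obstruction (even degree, $\pi_f$ principal series at every finite place, so no quaternion algebra of the right ramification type and hence no Shimura curve) is accurate. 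The one substantive slip is the final parenthetical: the remark after Corollary~\ref{cor to main result2} invokes modularity of elliptic curves over $K$ in order to remove the condition $c\neq\pm1$ because the proof of Theorem~\ref{modularity result for main result2} (modularity of the Frey curve) breaks down when $c=\pm1$; it is not a stand-in for Conjecture~\ref{ES conj}, which the paper already circumvents via Theorem~\ref{FS partial result of E-S conj} together with the local computations of Lemmas~\ref{reduction on T and S} and~\ref{reduction on T and S main result2}.
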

	In~\cite[Theorem 7.7]{D04}, Darmon proved that Conjecture~\ref{ES conj} holds over $K$, if either 
	$[K: \Q] $ is odd or there exists some prime ideal $\mfq \in P_K$ such that $v_\mfq(\mfn) = 1$.
	In \cite{FS15}, they provided a partial answer to Conjecture~\ref{ES conj} in terms of mod $p$ Galois representations attached to $E$.
	\begin{thm} \rm(\cite[Corollary 2.2]{FS15})
		\label{FS partial result of E-S conj}
		Let $E$ be an elliptic curve over $K$ and $p$ be an odd prime.
		Suppose that $\bar{\rho}_{E,p}$ is irreducible and $\bar{\rho}_{E,p} \sim \bar{\rho}_{f,p}$ for some Hilbert modular newform $f$ over $K$ of parallel weight $2$ and level
		$\mfn$ with rational eigenvalues.
		Let $\mfq \in P_K$ with $\mfq \nmid p$ be such that
		\begin{enumerate}
			\item E has potential multiplicative reduction at $\mfq$ (i.e., $v_\mfq(j_E) <0$);
			\item $p| \# \bar{\rho}_{E,p}(I_\mfq)$;
			\item  $p \nmid \left(\mathrm{Norm}(K/\Q)(\mfq) \pm 1\right)$.
		\end{enumerate}
		Then there exists an elliptic curve $E_f /K$ of conductor $\mfn$ having the same $L$-function as $f$ over $K$.
	\end{thm}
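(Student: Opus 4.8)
The plan is to deduce the statement from Darmon's partial resolution of the Eichler--Shimura conjecture, \cite[Theorem 7.7]{D04}: if $f$ has parallel weight $2$, level $\mfn$ and rational eigenvalues, and if $v_\mfq(\mfn)=1$ for \emph{some} prime $\mfq$ of $K$, then Conjecture~\ref{ES conj} holds for $f$, so there exists an elliptic curve $E_f/K$ of conductor $\mfn$ with $L(E_f/K,s)=L(f,s)$. Hence the real content of the statement is to show that hypotheses (1)--(3) at $\mfq$ force $f$ to be an unramified twist of the Steinberg representation at $\mfq$, so that $v_\mfq(\mfn)=1$ and Darmon's theorem applies. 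I would first observe that, since $\bar{\rho}_{E,p}$ is irreducible, $\bar{\rho}_{f,p}$ is irreducible and genuinely isomorphic to $\bar{\rho}_{E,p}$; in particular $f$ is non-Eisenstein, as is needed to invoke Darmon's construction.

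The heart of the argument is the local analysis at $\mfq$. By (1) the curve $E$ has potentially multiplicative reduction at $\mfq$, so over at most a quadratic extension of $K_\mfq$ it becomes a Tate curve, and hence $\bar{\rho}_{E,p}|_{D_\mfq}$ is, after a (possibly trivial) quadratic twist, an upper-triangular extension $\psmat{\bar{\chi}_{\mathrm{cyc}}}{\ast}{0}{1}$ whose extension class $\ast$ is the class of the Tate parameter; condition (2), $p\mid \#\bar{\rho}_{E,p}(I_\mfq)$, says exactly that this class is ramified, equivalently $p\nmid v_\mfq(j_E)$. Transporting this along $\bar{\rho}_{f,p}\cong\bar{\rho}_{E,p}$, I would then run through the local Langlands classification of the local component $\pi_{f,\mfq}$: if $\pi_{f,\mfq}$ is unramified then $\bar{\rho}_{f,p}$ is unramified at $\mfq$ (as $\mfq\nmid p$), contradicting (2); if $\pi_{f,\mfq}$ is a ramified principal series or a supercuspidal induced from a character of a quadratic extension, then $\rho_{f,p}|_{D_\mfq}$ is a sum of two characters, resp.\ an induced character, so $\bar{\rho}_{f,p}(I_\mfq)$ is abelian, resp.\ dihedral with cyclic part of order prime to $p$ (the relevant characters take values in $\bar{\F}_p^\ast$), again contradicting (2); an exceptional supercuspidal (which occurs only above $2$) has projective inertia image in $A_4$, $S_4$ or $A_5$, and the congruences on $\mathrm{Norm}(\mfq)$ modulo $p$ that this would force are ruled out by (3). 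Thus $\pi_{f,\mfq}$ must be a twist of Steinberg, and a last use of (3) excludes a ramified twisting character: such a character would contribute to $\bar{\rho}_{f,p}|_{I_\mfq}$ a nontrivial character of order dividing $\mathrm{Norm}(\mfq)-1$ or $\mathrm{Norm}(\mfq)+1$, incompatible with the shape forced by (1)--(2). Hence $\pi_{f,\mfq}$ is an unramified twist of Steinberg and $v_\mfq(\mfn)=1$.

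Given $v_\mfq(\mfn)=1$, \cite[Theorem 7.7]{D04} now produces the elliptic curve $E_f/K$ of conductor $\mfn$ with $L(E_f/K,s)=L(f,s)$, which is the assertion. I would remark that it is precisely the Steinberg, hence multiplicative-reduction, behaviour at $\mfq$ that makes Darmon's Shimura-curve construction yield a true elliptic curve rather than a ``fake elliptic curve'' (an abelian surface with quaternionic multiplication by an indefinite quaternion division algebra over $\Q$): such surfaces have potentially good reduction at every prime, since a nonzero toric part in a semistable degeneration would force that division algebra to act on the endomorphism algebra of a torus, which is impossible. The step I expect to be the main obstacle is the case analysis of the local type of $\pi_{f,\mfq}$ --- in particular excluding the exceptional supercuspidals and the ramified twists of Steinberg, which is exactly where condition (3) is indispensable; the other ingredients (Darmon's theorem, the Tate-curve description of $\bar{\rho}_{E,p}|_{D_\mfq}$, and the structure theory of $\rho_{f,p}|_{D_\mfq}$) are standard.
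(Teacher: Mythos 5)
This statement is cited verbatim from \cite[Corollary 2.2]{FS15}; the paper gives no proof of it, so there is nothing internal to compare your argument against. I will therefore assess the proposal on its own.

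Your overall strategy is the right one and is what Freitas and Siksek do: use hypotheses (1)--(3) together with $\bar\rho_{E,p}\sim\bar\rho_{f,p}$ to pin down the local type of $\pi_{f,\mfq}$, and then invoke a theorem guaranteeing that the motive attached to $f$ is an honest elliptic curve rather than a QM abelian surface. Two places in the execution, however, are asserted rather than proved and deserve attention. First, in ruling out ramified principal series and dihedral supercuspidals you justify ``cyclic part of order prime to $p$'' by the remark ``the relevant characters take values in $\bar{\F}_p^\ast$,'' which is not the actual reason (every mod-$p$ character has values there). The point is that the $p$-adic inertial characters in these cases have finite order dividing $\mathrm{Norm}(\mfq)-1$ resp.\ $\mathrm{Norm}(\mfq)^2-1$ times a power of the residue characteristic of $\mfq$, and hypothesis (3) (together with $\mfq\nmid p$ and $p$ odd) is exactly what forces that order, hence the order of $\bar\rho_{f,p}(I_\mfq)$, to be prime to $p$. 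You should make (3) do this work explicitly, and the same applies to your exceptional supercuspidal case, where (3) is likewise the reason $p\nmid\#\bar\rho_{f,p}(I_\mfq)$.

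Second, and more seriously, your exclusion of a \emph{ramified} twist of Steinberg is not justified as stated. You claim such a twist is ``incompatible with the shape forced by (1)--(2),'' but (1) only says $E$ has \emph{potentially} multiplicative reduction at $\mfq$: the quadratic character $\chi_E$ through which $E$ becomes a Tate curve over $K_\mfq$ may itself be ramified. In that case $\bar\rho_{E,p}|_{I_\mfq}$ is a ramified quadratic character times a unipotent, which matches perfectly the inertial restriction of $\chi\otimes\mathrm{St}$ with $\chi$ ramified quadratic; so (1)--(2) alone do not force $v_\mfq(\mfn)=1$, and Darmon's Theorem~7.7 in the form quoted in the paper does not directly apply. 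The way out is essentially the observation you make in your closing remark, which should be promoted from a side comment to a step of the argument: once $\pi_{f,\mfq}$ is \emph{any} twist of Steinberg the local Weil--Deligne representation has $N\neq 0$, so the associated abelian variety $A_f$ has a nonzero toric part in its semistable degeneration at $\mfq$ and therefore cannot be a fake elliptic curve (a QM abelian surface has potentially good reduction everywhere); one then needs a version of the Blasius/Darmon theorem phrased in those terms rather than in terms of $v_\mfq(\mfn)=1$. If you insist on literally reducing to $v_\mfq(\mfn)=1$, you must supply the missing argument excluding ramified quadratic Steinberg twists, which (1)--(3) do not by themselves provide.
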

	
	\subsection{Reduction type of the Frey elliptic curve at primes lying above $2$}
	We recall the following basic result, which can be conveniently found in \cite[Lemma 3.4]{FS15}. We use it to determine the types of reduction of the Frey curve $E$ at primes $\mfP \in S_{K^+,2}$. 
	\begin{lem}
		\label{criteria for potentially multiplicative reduction}
		Let $K$ be a totally real number field. Let $E/K$ be an elliptic curve and $p\geq 5$ be a rational prime. For $\mfq \in P_K$ with $\mfq \nmid p$, $E$ has potential multiplicative reduction at $\mfq$ (i.e., $v_\mfq(j_E) <0$) and $p \nmid v_\mfq(j_E)$ if and only if $p | \# \bar{\rho}_{E,p}(I_\mfq)$.
	\end{lem}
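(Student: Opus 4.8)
The plan is to reduce everything to the standard theory of the image of the inertia subgroup $I_\mfq$ under $\bar\rho_{E,p}$, separating the cases of potentially good and potentially multiplicative reduction. Since $\mfq\nmid p$, the representation $\bar\rho_{E,p}|_{I_\mfq}$ is tamely ramified, and its image is a finite cyclic (or, in the potentially multiplicative case, possibly larger) subgroup of $\GL_2(\F_p)$ whose order one can read off directly from the Kodaira/reduction type of $E$ at $\mfq$. First I would invoke the dichotomy: either $v_\mfq(j_E)\geq 0$, in which case $E$ has potential good reduction at $\mfq$, or $v_\mfq(j_E)<0$, in which case $E$ has potential multiplicative reduction at $\mfq$.

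\textbf{($\Leftarrow$, and the potentially good case.)} Suppose $E$ has potential good reduction at $\mfq$. Then there is a finite extension over which $E$ acquires good reduction, and by the criterion of N\'eron--Ogg--Shafarevich together with the classification of the possible images of tame inertia (see e.g.\ Serre's work, or \cite[Lemma 3.4]{FS15}), $\#\bar\rho_{E,p}(I_\mfq)$ divides $24$ (more precisely it lies in $\{1,2,3,4,6\}$, possibly $8,12$ in residue characteristic $2,3$), and in particular, since $p\geq 5$, we have $p\nmid \#\bar\rho_{E,p}(I_\mfq)$. Conversely, if $p\mid \#\bar\rho_{E,p}(I_\mfq)$ then $E$ cannot have potential good reduction, so $v_\mfq(j_E)<0$; and if moreover $p\mid v_\mfq(j_E)$ then by the description below $\#\bar\rho_{E,p}(I_\mfq)$ would have order $1$ or $2$, contradicting $p\geq 5$ dividing it. Hence $p\nmid v_\mfq(j_E)$.

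\textbf{(Potentially multiplicative case, $\Rightarrow$.)} Now assume $v_\mfq(j_E)<0$. After a quadratic twist, $E$ is (over $K_\mfq$) either a Tate curve $E_q$ with $v_\mfq(q)=-v_\mfq(j_E)>0$ or its unramified quadratic twist. In the Tate case the theory of the Tate parametrization gives $\bar\rho_{E,p}|_{G_{K_\mfq}}$ as an extension of $\F_p$ by $\F_p(1)$, with the extension class governed by $q$ modulo $p$th powers; restricting to $I_\mfq$, the image is trivial exactly when $q\in (K_\mfq^\times)^p\cdot \mcO_{K_\mfq}^\times$-ish, i.e.\ precisely when $p\mid v_\mfq(q)=v_\mfq(j_E)$, and otherwise the image is a group of order $p$ generated by a unipotent element. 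The unramified quadratic twist only changes the image by a subgroup of order dividing $2$, which is invisible to divisibility by $p\geq 5$. Therefore $p\mid \#\bar\rho_{E,p}(I_\mfq)$ if and only if $p\nmid v_\mfq(j_E)$, which combined with the assumption $v_\mfq(j_E)<0$ gives exactly the claimed equivalence. This matches the statement of \cite[Lemma 3.4]{FS15}, from which the result may simply be quoted; I expect the only subtlety worth spelling out is the harmless role of the quadratic twist (residue characteristic $2$) and of the small torsion in the potentially-good case, both controlled by $p\geq 5$.
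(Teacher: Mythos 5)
The paper does not prove this lemma at all: it is stated as ``a basic result, which can be conveniently found in \cite[Lemma 3.4]{FS15}'' and simply cited. Your proposal, by contrast, actually sketches the proof, and the sketch is essentially correct and is the standard argument one would find behind the Freitas--Siksek statement. The key points you give are all sound: the dichotomy between potentially good ($v_\mfq(j_E)\geq 0$) and potentially multiplicative ($v_\mfq(j_E)<0$) reduction; in the potentially good case with $\mfq\nmid p$ the image $\bar\rho_{E,p}(I_\mfq)$ is a finite group of order dividing $24$, so $p\geq 5$ cannot divide it; and in the potentially multiplicative case, after an at-worst-quadratic twist $E$ becomes a Tate curve $E_q$ over $K_\mfq$, the cyclotomic character is unramified at $\mfq\nmid p$, so $\bar\rho_{E,p}|_{I_\mfq}$ is unipotent with image of order $1$ or $p$, nontrivial exactly when $K_\mfq(q^{1/p})/K_\mfq$ is ramified, i.e.\ exactly when $p\nmid v_\mfq(q)=-v_\mfq(j_E)$. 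The quadratic twist perturbs the image by a factor dividing $2$, harmless for $p\geq 5$. In short: you proved what the paper merely quotes, and your route is the same one the cited reference uses; nothing is missing, though in the context of this manuscript the intended ``proof'' is simply the citation.
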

	
	
	\begin{lem}
		\label{reduction on T and S}
		Let $\mfP \in S_{K^+,2}$. Let $(a,b,c) \in \mcO_{K^+}^3$ be a non-trivial primitive solution of the equation \eqref{r,r,p over K} of prime exponent $p > 4v_\mfP(2)$ with $\mfP |c$, and let $E$ be the Frey curve given in~\eqref{Frey curve result1}.  Then $\ v_\mfP(j_E) < 0$ and $p \nmid v_\mfP(j_E)$, equivalently $p | \#\bar{\rho}_{E,p}(I_\mfP)$.
	\end{lem}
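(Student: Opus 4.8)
The goal is to show that, for the Frey curve $E/K^+$ in~\eqref{Frey curve result1} attached to a primitive solution with $\mfP\mid c$, the curve has potential multiplicative reduction at $\mfP$ with $p\nmid v_\mfP(j_E)$; the final equivalence with $p\mid\#\bar\rho_{E,p}(I_\mfP)$ is then immediate from Lemma~\ref{criteria for potentially multiplicative reduction} (applicable since $p\geq 5 > 4v_\mfP(2)\geq 4$, so in particular $\mfP\nmid p$ — note $\mfP\mid 2$). So the whole task reduces to computing $v_\mfP(j_E)$ and showing it is negative and not divisible by $p$. The plan is to use the explicit formula $j_E = 2^8 (AB+BC+CA)^3/(ABC)^2$ recorded after~\eqref{Frey curve result1}, together with the factorizations of $A\mcO_{K^+}, B\mcO_{K^+}, C\mcO_{K^+}$ from Lemma~\ref{factorisation of A,B,C}.

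First I would write $v_\mfP(j_E) = 8v_\mfP(2) + 3v_\mfP(AB+BC+CA) - 2v_\mfP(ABC)$. By the construction of the Frey curve, $\mfP\mid f_{k_1}(a,b)$, hence $\mfP\mid A$; by Corollary~\ref{coprime for A,B,C} (with $\mfP\notin S_{K^+,rd}$ since $d$ is odd and $\mfP\mid 2$), $\mfP\nmid BC$. Thus $v_\mfP(B)=v_\mfP(C)=0$ and, from~\eqref{ideal factor of A,B,C}, $v_\mfP(A) = p\,v_\mfP(\mfc_A)$ where $\mfc_A\mid c\mcO_{K^+}$ and $v_\mfP(\mfc_A)\geq 1$ because $\mfP\mid c$ and $\mfP\nmid\prod_{\mfp\in S_{K^+,rd}}\mfp^{\alpha_p}$. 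Hence $v_\mfP(ABC) = v_\mfP(A) = p\,v_\mfP(\mfc_A) \geq p$. For the numerator, $v_\mfP(AB+BC+CA)$: since $\mfP\mid A$ we have $v_\mfP(AB+CA)\geq v_\mfP(A)\geq p$ while $v_\mfP(BC)=0$, so the non-archimedean triangle inequality gives $v_\mfP(AB+BC+CA)=0$. Therefore $v_\mfP(j_E) = 8v_\mfP(2) - 2p\,v_\mfP(\mfc_A)$.

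Now I would conclude: since $p > 4v_\mfP(2)$, we get $2p\,v_\mfP(\mfc_A) \geq 2p > 8v_\mfP(2)$, so $v_\mfP(j_E) < 0$, giving potential multiplicative reduction at $\mfP$. For the divisibility statement, $v_\mfP(j_E) = 8v_\mfP(2) - 2p\,v_\mfP(\mfc_A) \equiv 8v_\mfP(2) \pmod p$, and $0 < 8v_\mfP(2) < 2p < 2p\,v_\mfP(\mfc_A)$ combined with $8v_\mfP(2) < 2p$ forces $8v_\mfP(2)\not\equiv 0\pmod p$ provided $p\nmid 8v_\mfP(2)$ — and indeed $0 < 8v_\mfP(2) < 2p$ means $8v_\mfP(2)$ is either $<p$ (hence nonzero mod $p$) or strictly between $p$ and $2p$ (hence nonzero mod $p$); either way $p\nmid v_\mfP(j_E)$. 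Finally, Lemma~\ref{criteria for potentially multiplicative reduction} translates "$v_\mfP(j_E)<0$ and $p\nmid v_\mfP(j_E)$" into "$p\mid\#\bar\rho_{E,p}(I_\mfP)$", completing the proof.

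**Main obstacle.** None of the steps is genuinely hard; the only point requiring care is the bookkeeping of valuations at $\mfP$ — verifying that $\mfP\notin S_{K^+,rd}$ (which uses the standing hypothesis that $d$ is odd, so that $S_{K^+,2}$ and $S_{K^+,rd}$ are disjoint), that $v_\mfP(\mfc_A)\geq 1$, and that the triangle inequality applies cleanly to the numerator. The numerical comparison $p > 4v_\mfP(2)$ is exactly calibrated so that $8v_\mfP(2) < 2p$, which is what simultaneously yields negativity of $v_\mfP(j_E)$ and non-divisibility by $p$; I would make sure to state this inequality explicitly rather than leave it implicit.
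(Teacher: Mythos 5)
Your argument matches the paper's proof in all essentials: identify $\mfP\mid A$ and $\mfP\nmid BC$ via Corollary~\ref{coprime for A,B,C}, read off $v_\mfP(A)=p\,v_\mfP(\mfc_A)$ from Lemma~\ref{factorisation of A,B,C} (using $\mfP\notin S_{K^+,rd}$ since $d$ is odd), and compute $v_\mfP(j_E)=8v_\mfP(2)-2p\,v_\mfP(\mfc_A)$, concluding by the hypothesis $p>4v_\mfP(2)$. One small slip in the non-divisibility step: your case split ``$8v_\mfP(2)<p$ or strictly between $p$ and $2p$'' omits the possibility $8v_\mfP(2)=p$; that case is excluded because $8v_\mfP(2)$ is even while $p\geq 5$ is odd, and this parity remark should be made explicit.
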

	
	\begin{proof}
		Recall that, $\Delta_E=2^4(ABC)^2$ and $j_E=2^8 \frac{(AB+BC+CA)^3}{(ABC)^2}$.
		By the construction of the Frey curve given in~\eqref{Frey curve result1} and by Corollary~\ref{coprime for A,B,C}, we have $\mfP |A$ and $\mfP \nmid BC$. This gives $v_\mfP(j_E)=8 v_\mfP(2)-2 v_\mfP(A)=8 v_\mfP(2)-2p v_\mfP(\mfc_A)$, where $\mfc_A$ is as in Lemma~\ref{factorisation of A,B,C}. Since $v_\mfP(\mfc_A) >0$ and $p > 4v_\mfP(2)$, it follows that $\ v_\mfP(j_E) < 0$ and $p \nmid v_\mfP(j_E)$. Finally, by Lemma~\ref{criteria for potentially multiplicative reduction}, we have $p | \#\bar{\rho}_{E,p}(I_\mfP)$.
			%
			%
	\end{proof}
	
	\subsection{An auxiliary result for Theorem~\ref{main result1 for (r,r,p)}}
	The next result is a key ingredient in the proof of Theorem~\ref{main result1 for (r,r,p)} and its proof follows closely  \cite[Theorem 9]{FS15} and \cite[Theorem 28]{M23}.
	\begin{prop}
		\label{auxilary result for main result1}
		Let $K$ be a totally real number field, $r\geq 5$ be a fixed rational prime and $d\in \mcO_K \setminus \{0\}$. Let $K^+:=K(\zeta_r+ \zeta_r^{-1})$. Then, there is a constant $V:=V_{K,r,d}>0$ (depending on $K,r,d$) such that the following holds.
		Let $(a,b,c) \in \mcO_{K^+}^3$ be a non-trivial primitive solution to the equation \eqref{r,r,p over K} of exponent $p>V$ with $\mfP |c$ for some $\mfP \in S_{K+, 2}$, and let $E/{K^+}$ be the Frey curve given in~\eqref{Frey curve result1}. Then, there exists an elliptic curve $E^\prime/K^+$ such that:
		\begin{enumerate}
			\item $E^\prime/K^+$ has good reduction away from $S_{K^+,2rd}$; 
			\item $E^\prime/K^+$ has full $2$-torsion points i.e. $E'(\bar{\Q})[2] \subset E'(K^+)$;
			\item We have $\bar{\rho}_{E,p} \sim \bar{\rho}_{E^\prime,p}$ (as $G_{K^+}$ modules) and  $v_\mfP(j_{E^\prime})<0$.
		\end{enumerate}
	\end{prop}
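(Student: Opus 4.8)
The plan is to run Step~2 of the modular method for the Frey curve $E/K^+$ of~\eqref{Frey curve result1}: level-lower $\bar\rho_{E,p}$ to a Hilbert newform, discard the newforms with irrational Hecke eigenvalues, and promote the surviving one to the sought curve $E'$ via the partial Eichler--Shimura statement (Theorem~\ref{FS partial result of E-S conj}). Throughout, $\mfP\in S_{K^+,2}$ is the prime with $\mfP\mid c$ fixed in the construction of~\eqref{Frey curve result1}, and ``$p$ large'' will mean $p$ exceeds finitely many explicit thresholds depending only on $K,r,d$; the constant $V=V_{K,r,d}$ is their maximum.

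First I would check the hypotheses of the level-lowering Theorem~\ref{level lowering of mod $p$ repr}. Modularity of $E/K^+$ for $p>D_{K,r,d}$ is Theorem~\ref{modularity result for main result1}. Since $S_{K^+,2rd}$ is a fixed finite set, for $p$ large every prime $\mfq\mid p$ of $K^+$ lies outside it, so Theorem~\ref{semi stable red of Frey curve} gives that $E$ is semistable at all $\mfq\mid p$ with $p\mid v_\mfq(\Delta_E)$; also $e(\mfq/p)\le[K^+:\Q]<p-1$ and $\Q(\zeta_p)^+\nsubseteq K^+$ for $p$ large. Irreducibility of $\bar\rho_{E,p}$ for $p$ large follows from Theorem~\ref{irreducibility of mod $P$ representation} applied over the Galois closure of $K^+$ (irreducibility over a larger field descends to $K^+$). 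Hence Theorem~\ref{level lowering of mod $p$ repr} yields a Hilbert modular newform $f$ over $K^+$ of parallel weight $2$ and level $\mfn_p$, and a prime $\lambda\mid p$ of $\Q_f$, with $\bar\rho_{E,p}\sim\bar\rho_{f,\lambda}$.

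Next I would pin down $f$. By Theorem~\ref{semi stable red of Frey curve} the level $\mfn_p$ divides a fixed ideal supported on $S_{K^+,2rd}$, so $f$ ranges over a finite set depending only on $K,r,d$. For each such $f$ with $\Q_f\ne\Q$, the Hecke eigenvalues at primes away from the level generate $\Q_f$, so there is a fixed prime $\mfq_0\notin S_{K^+,2rd}$ with $a_{\mfq_0}(f)\notin\Z$; since $E$ has good or multiplicative reduction at $\mfq_0$ (Theorem~\ref{semi stable red of Frey curve}), the usual comparison of Frobenius traces at $\mfq_0$ forces $p$ to divide a fixed nonzero integer (a norm of a product of factors $a_{\mfq_0}(f)-t$ with $|t|\le 2\sqrt{\mathrm{Norm}(\mfq_0)}$, together with $a_{\mfq_0}(f)\mp(\mathrm{Norm}(\mfq_0)+1)$), which is impossible for $p$ large. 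For the remaining $f$, necessarily $\Q_f=\Q$, I would apply Theorem~\ref{FS partial result of E-S conj} with $\mfq=\mfP$: hypotheses (1) and (2) are precisely $v_\mfP(j_E)<0$ and $p\mid\#\bar\rho_{E,p}(I_\mfP)$ from Lemma~\ref{reduction on T and S} (valid as $p>4v_\mfP(2)$), and (3) holds because $\mathrm{Norm}(\mfP)$ is a fixed power of $2$, so $p\nmid\mathrm{Norm}(\mfP)\pm1$ for $p$ large. This produces an elliptic curve $E'/K^+$ of conductor $\mfn_p$ with $\bar\rho_{E',p}\sim\bar\rho_{f,p}\sim\bar\rho_{E,p}$.

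It then remains to verify the three properties. Property~(1) is immediate: the conductor of $E'$ equals $\mfn_p$, which is supported on $S_{K^+,2rd}$. For the last part of~(3): since $\bar\rho_{E',p}\sim\bar\rho_{E,p}$ we get $p\mid\#\bar\rho_{E',p}(I_\mfP)$ with $\mfP\nmid p$, so Lemma~\ref{criteria for potentially multiplicative reduction} applied to $E'$ yields $v_\mfP(j_{E'})<0$; together with $\bar\rho_{E,p}\sim\bar\rho_{E',p}$ this is~(3). The remaining point is property~(2), full $2$-torsion of $E'$: here I would exploit that the Frey curve $E$ carries full rational $2$-torsion, which — by a refinement of the level-lowering step that keeps track of this $2$-torsion, in the spirit of~\cite{M23} and~\cite{FS15} — allows the level-lowered data, hence $E'$, to be chosen with full $2$-torsion over $K^+$ (concretely, matching the admissible newforms at level $\mfn_p$ against the finitely many elliptic curves over $K^+$ with good reduction outside $S_{K^+,2rd}$ and full $2$-torsion). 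I expect this last point — guaranteeing \emph{full} $2$-torsion of $E'$, rather than merely some curve realising $\bar\rho_{E,p}$ — to be the main obstacle; by contrast, eliminating the irrational-eigenvalue newforms and checking hypothesis~(3) of Theorem~\ref{FS partial result of E-S conj} are routine once one observes that $\mathrm{Norm}(\mfP)$ is a power of $2$.
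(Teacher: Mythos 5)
Your proposal follows essentially the same route as the paper for Proposition~\ref{auxilary result for main result1}: verify the hypotheses of Theorem~\ref{level lowering of mod $p$ repr} (modularity via Theorem~\ref{modularity result for main result1}, semistability at all $\mfq\mid p$ via Theorem~\ref{semi stable red of Frey curve} once $p$ is large enough that $\mfq\mid p$ avoids $S_{K^+,2rd}$, irreducibility via Theorem~\ref{irreducibility of mod $P$ representation} over the Galois closure), level-lower to a newform $f$ of level $\mfn_p$, reduce to $\Q_f=\Q$ for $p$ large, and invoke Theorem~\ref{FS partial result of E-S conj} at $\mfq=\mfP$ using Lemma~\ref{reduction on T and S}. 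Your elimination of newforms with $\Q_f\neq\Q$ via Frobenius trace comparison at a fixed auxiliary prime is, if anything, spelled out in more detail than in the paper, which simply cites \cite[Proposition 15.4.2]{C07}.

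You correctly flag the genuine gap, which is property~(2): producing $E'$ with \emph{full} $2$-torsion over $K^+$. Your proposed fix --- ``matching the admissible newforms at level $\mfn_p$ against the finitely many elliptic curves over $K^+$ with good reduction outside $S_{K^+,2rd}$ and full $2$-torsion'' --- is not a mechanism: it does not explain \emph{why} one of those finitely many curves must lie in the $K^+$-isogeny class attached to $f$, which is precisely what must be proved. The paper resolves this by enlarging $V$ and replacing the curve $E_f$ supplied by Theorem~\ref{FS partial result of E-S conj} with a $K^+$-isogenous curve $E'$, citing \cite[Proposition 15.4.2]{C07} and the fact that the Frey curve satisfies $E(\bar\Q)[2]\subset E(K^+)$; the substance of that step, as in \cite[\S4]{FS15}, is a trace-comparison argument at a fixed finite auxiliary set of primes showing that, for $p$ sufficiently large, the isogeny class of $E_f$ contains a curve with trivial $2$-division field over $K^+$. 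Once $E'$ is taken in the isogeny class rather than equal to $E_f$, you would also need the one-line remark (as the paper gives) that good reduction outside $S_{K^+,2rd}$, hence property~(1), is preserved because the conductor is an isogeny invariant; your property~(3) goes through unchanged since $\bar\rho_{E,p}\sim\bar\rho_{E',p}$ still holds and $v_\mfP(j_{E'})<0$ follows from Lemma~\ref{criteria for potentially multiplicative reduction}.
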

	\begin{proof}
		By Theorem~\ref{semi stable red of Frey curve}, the Frey curve $E$ is semistable away from the set $S_{K^+,2rd}$. By Theorem~\ref{modularity result for main result1}, the Frey curve $E$ is modular for primes $p\gg0$. By Theorem~\ref{irreducibility of mod $P$ representation} and replacing $K$ with its Galois closure, we conclude that $\bar{\rho}_{E,p}$ is irreducible for $p \gg 0$.
		
		By Theorem~\ref{level lowering of mod $p$ repr}, there exists a Hilbert modular newform $f$ of parallel weight $2$, level $\mfn_p$ and some prime $\lambda$ of $\Q_f$ such that $\lambda | p$ and $\bar{\rho}_{E,p} \sim \bar{\rho}_{f,\lambda}$ for $p \gg 0$. 
		By allowing $p$ to be sufficiently large, we can assume $\Q_f=\Q$. This follows from~\cite[Proposition 15.4.2]{C07} (cf.~\cite[\S 4]{FS15} for more details).
		
		Now, by Lemma~\ref{reduction on T and S}, $E$ has potential multiplicative reduction at $\mfP$ and $p | \#\bar{\rho}_{E,p}(I_\mfP)$
		for $p \gg 0$. By Theorem~\ref{FS partial result of E-S conj}, there exists an elliptic curve $E_f$ of conductor $\mfn_p$ such that $\bar{\rho}_{E,p} \sim \bar{\rho}_{E_f,p}$ for $p \gg 0$ (after excluding the primes $p \mid \left( \text{Norm}(K/\Q)(\mfP) \pm 1 \right)$). Let $V:=V_{K,r,d}$ be the maximum of all the above lower bounds of $p$. Then $\bar{\rho}_{E,p} \sim \bar{\rho}_{E_f,p}$ for $p>V$.
		
		Since the conductor of $E_f$ is $\mfn_p$ given in \eqref{conductor of E and E' x^2=By^p+Cz^p Type I}, $E_f$ has good reduction away from $S_{K^+,2rd}$. The Frey curve $E/K^+$ has full $2$-torsion points given by $\{O, (0,0), (0, A), (0,B)\}$, where $A,B \in K^+$ are as in \S\ref{section for Frey curve}.
		After enlarging $V$ by an effective amount and by possibly replacing $E_f$ with an isogenous curve, say $E^\prime$, we will find that $E^\prime/ K^+$ has full $2$-torsion and  $\bar{\rho}_{E,p} \sim\bar{\rho}_{E^\prime,p}$. 
		This follows from~\cite[Proposition 15.4.2]{C07} and the fact that $E(\bar{\Q})[2] \subset E(K^+)$  (cf.~\cite[\S 4]{FS15} for more details). Finally, since $E_f$ is isogenous to $E^\prime$, it follows that $E^\prime$ has good reduction away from $S_{K^+,2rd}$.
		
		Using Lemma~\ref{reduction on T and S}, we have $p |\# \bar{\rho}_{E,p}(I_\mfP)= \# \bar{\rho}_{E^\prime,p}(I_\mfP)$. By Lemma~\ref{criteria for potentially multiplicative reduction}, we get $v_\mfP(j_{E^\prime})<0$.
		This completes the proof of the proposition.
	\end{proof}
	
	We are now ready to complete the proof of Theorem~\ref{main result1 for (r,r,p)}. The proof follows closely \cite[Theorem 3]{FS15} and \cite[Theorem 4]{M23}.
	\begin{proof}[Proof of Theorem~\ref{main result1 for (r,r,p)}]
		Let $V=V_{K,r,d}$ be as in Proposition~\ref{auxilary result for main result1}, and let $(a,b,c) \in \mcO_{K^+}^3$ be a non-trivial primitive solution to the equation \eqref{r,r,p over K} of exponent $p>V$ with $\mfP |c$, where $\mfP \in S_{K+, 2}$ as in Theorem~\ref{main result1 for (r,r,p)}. By Proposition~\ref{auxilary result for main result1}, there exists an elliptic curve $E^\prime/K^+$ having full $2$-torsion.
		Hence, $E^\prime$ has a model of the form $E^\prime: Y^2 = (X-e_1)(X-e_2)(X-e_3)$, where $e_1, e_2, e_3$ are distinct and their cross ratio $\lambda= \frac{e_3-e_1}{e_2-e_1} \in \mathbb{P}^1(K^+)-\{0,1,\infty\}$. Then,  $E^\prime$ is isomorphic (over $\overline{K^+}$) to an elliptic curve $E_\lambda$ in the Legendre form:
		\begin{small}		$$E_\lambda : y^2 = x(x - 1)(x-\lambda) \text{ for } \lambda \in \mathbb{P}^1(K^+)-\{0,1,\infty\} \text{ with}$$ 
			\begin{equation}
				\label{j'-invariant of Legendre form}
				j_{E^\prime} = j(E_\lambda)= 2^8\frac{(\lambda^2-\lambda+1)^3}{\lambda^2(1-\lambda)^2}.
			\end{equation}
		\end{small}
		Since $E^\prime$ has good reduction away from $S_{K^+,2rd}$, $j_{E^\prime} \in \mcO_{S_{K^+,2rd}}$. Using \eqref{j'-invariant of Legendre form}, we have $\lambda, 1-\lambda \in \mcO^*_{S_{K^+,2rd}}$.
		Hence $(\lambda,\ \mu:=1-\lambda)$ is a solution of the $S_{K^+,2rd}$-unit equation~\eqref{S_K-unit solution}.
		Rewriting \eqref{j'-invariant of Legendre form} in terms of $\lambda, \ \mu$, we have
		\begin{small}
			\begin{equation}
				\label{j' in terms of lambda and mu}
				j_{E^\prime}= 2^8\frac{(1-\lambda \mu)^3}{(\lambda \mu)^2}.
			\end{equation}
		\end{small}	
		Using \eqref{assumption for main result1}, we have $t:=\max \left\{|v_\mfP(\lambda)|,|v_\mfP(\mu)| \right\}\leq 4v_\mfP(2)$. 
		If $t=0$, then $v_\mfP(\lambda)= v_\mfP(\mu)=0$, and hence $v_\mfP(j_{E^\prime})\geq 8v_\mfP(2)>0$. 
		If $t>0$, then either $v_\mfP(\lambda)=v_\mfP(\mu)=-t$, or $v_\mfP(\lambda)=t$ and $ v_\mfP(\mu)=0$, or $v_\mfP(\lambda)=0$ and $v_\mfP(\mu)=t$, since $\lambda + \mu =1$. This gives $v_\mfP(\lambda \mu)=-2t$ or $t$, and $v_\mfP(j_{E^\prime})\geq 8v_\mfP(2)-2t \geq 0$. In all cases, we have $v_\mfP(j_{E^\prime})\geq 0 $, which contradicts Proposition~\ref{auxilary result for main result1}(3). This completes the proof of Theorem~\ref{main result1 for (r,r,p)}.
	\end{proof}

	\section{Proof of Theorem~\ref{main result2 for r,r,p}}
	\label{steps to prove main result2}
	\subsection{Construction of Frey elliptic curve for $x^5+y^5=dz^p$}
	For any non-trivial primitive solution $(a, b, c)\in \mcO_K^3$ to the Diophantine equation $x^5+y^5=dz^p$, consider the Frey elliptic curve 
	\begin{equation}
		\label{Frey curve result2}
		E:=E_{a,b,c}/K : y^2= x^3-5(a^2+b^2)x^2+5\phi_5(a,b)x,
	\end{equation}
	where $\phi_5(a,b):=\frac{a^5+b^5}{a+b}, \ c_4=2^4 5 (5(a^2+b^2)^2-3\phi_5(a,b)),\\ \Delta_E=2^4 5^3 (a+b)^2(a^5+b^5)^2$ and $ j_E=2^{8} \frac{(5(a^2+b^2)^2-3\phi_5(a,b))^3}{(a+b)^2(a^5+b^5)^2}$ (cf. \cite[\S2]{B07} for the Frey curve). 
	Note that the Frey curve of the form \eqref{Frey curve result2} works specifically for $r=5$.
	\subsection{Modularity of the Frey curve}
	In this subsection, we prove the modularity of the Frey curve in \eqref{Frey curve result2} for a sufficiently large prime $p$.
	\begin{thm}
		\label{modularity result for main result2}
		Let $K$ be a totally real number field and let $d\in \mcO_K \setminus \{0\}$. Then, there exists a constant $D=D_{K,d}>0$ (depending on $K,d$) such that for any non-trivial primitive solution $(a,b,c) \in \mcO_{K}^3$ to the equation $x^5+y^5=dz^p$ of exponent $p >D$ with $c \neq \pm1$, the Frey elliptic curve $E/K$ given in~\eqref{Frey curve result2} is modular.  
	\end{thm}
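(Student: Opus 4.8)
The plan is to argue exactly as in the proof of Theorem~\ref{modularity result for main result1}. First I would invoke Theorem~\ref{modularity result of elliptic curve over totally real}: up to $\overline{K}$-isomorphism there are only finitely many non-modular elliptic curves over $K$, say with $j$-invariants $j_1,\dots,j_s\in K$. Since $(a,b,c)$ is non-trivial we have $a\neq0$, so I would put $t:=b/a\in K$, and note $a+b\neq0$ (otherwise $c=0$), hence $(1+t)(1+t^5)\neq0$. Substituting $a^2+b^2=a^2(1+t^2)$, $\phi_5(a,b)=a^4\phi_5(1,t)$ and $a^5+b^5=a^5(1+t^5)$ into the formula for $j_E$ attached to~\eqref{Frey curve result2} gives
\begin{equation*}
	j_E=g(t):=2^{8}\,\frac{\bigl(5(1+t^2)^2-3\phi_5(1,t)\bigr)^3}{(1+t)^2(1+t^5)^2},
\end{equation*}
a non-constant rational function in $t$ (numerator and denominator both of degree $12$). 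For each $i$ the polynomial $P_i(t):=2^{8}\bigl(5(1+t^2)^2-3\phi_5(1,t)\bigr)^3-j_i(1+t)^2(1+t^5)^2\in K[t]$ has degree $\leq 12$, and $j_E=j_i$ forces $P_i(t)=0$ after clearing the nonzero denominator. So there is a finite set $T=\{t_1,\dots,t_m\}\subset K$, depending only on $K$, such that $E/K$ is modular whenever $t\notin T$. A direct check shows each $t_k\in T$ satisfies $1+t_k^{5}\neq0$: one has $P_i(-1)=2^{8}\cdot 5^{3}\neq0$, so $t_k\neq-1$, while $1+t_k^{5}=0$ with $t_k\neq-1$ would force $\phi_5(1,t_k)=0$ and $1+t_k^{2}\neq0$, whence $P_i(t_k)\neq0$, a contradiction.

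It then remains to rule out $t=t_k$ once $p$ is large, and here I would use the Diophantine equation directly rather than the conductor. If $b/a=t_k$, then $b=t_k a$, so $a^{5}(1+t_k^{5})=a^{5}+b^{5}=dc^{p}$ with $1+t_k^{5}\in K^{\times}$ fixed. Since $c\neq\pm1$, choose a prime $\mfq\in P_K$ with $v_\mfq(c)\geq1$; by Remark~\ref{coprime of a,b,c}, $\gcd(a,c)=1$, so $v_\mfq(a)=0$ and therefore $p\,v_\mfq(c)=v_\mfq(1+t_k^{5})-v_\mfq(d)$. As $v_\mfq(c)\geq1$ and $v_\mfq(d)\geq0$ (because $d\in\mcO_K$), this gives $p\leq v_\mfq(1+t_k^{5})$, a quantity bounded independently of $p$. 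I would then take $D=D_{K,d}$ to be one more than the maximum of $|v_{\mfq'}(1+t_k^{5})|$ over all $1\leq k\leq m$ and all $\mfq'\in P_K$ (and $D:=1$ if $T=\emptyset$); for $p>D$ we are forced to have $t\notin T$, so $E/K$ is modular.

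I expect the bulk of this to be routine: the rational-function identity for $j_E$, its degree count, and the bookkeeping of the constant $D$. The main point requiring care will be verifying that the finitely many exceptional values $t_k$ are not poles of $g$, equivalently that $1+t_k^{5}\neq0$; this is precisely what makes the substitution $a^{5}(1+t_k^{5})=dc^{p}$ legitimate and lets a prime divisor of $c$ pin the exponent $p$ to a bounded range. The hypothesis $c\neq\pm1$ enters only at this last step, to guarantee the existence of such a prime $\mfq$; dropping it would instead require assuming modularity of all elliptic curves over $K$, in line with the remark following Theorem~\ref{main result2 for (r,r,p)}.
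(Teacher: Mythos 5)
Your proof follows the same skeleton as the paper's: invoke Theorem~\ref{modularity result of elliptic curve over totally real} to get finitely many non-modular $j$-invariants $j_1,\dots,j_s$ over $K$, write $j_E$ as a rational function of $t=b/a$, deduce a finite exceptional set $T$ of slopes, and then for each $t_k\in T$ bound the admissible exponent $p$. Your degree count of $12$ (after cancelling the common factor $(1+t)^3$ from numerator and denominator) is the simplified form of the paper's ``at most fifteen'' and both are fine. Your explicit verification that $1+t_k^5\neq 0$ for $t_k\in T$ (via $P_i(-1)=2^8\cdot 5^3\neq 0$ and the $\phi_5(1,t_k)=0$ case) is a correct detail that the paper glosses over. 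Up to this point the two arguments coincide.

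Where you depart from the paper is in bounding $p$, and here your argument has a genuine gap. You write ``Since $c\neq\pm1$, choose a prime $\mfq\in P_K$ with $v_\mfq(c)\geq 1$.'' In a totally real number field the only roots of unity are $\pm1$, so the hypothesis $c\neq\pm1$ rules out $c$ being a root of unity, but it does \emph{not} rule out $c$ being a non-torsion unit (for example $c=1+\sqrt2$ in $\Q(\sqrt2)$). For such $c$ one has $v_\mfq(c)=0$ for every prime $\mfq$, so no prime $\mfq$ with $v_\mfq(c)\geq 1$ exists, and the identity $p\,v_\mfq(c)=v_\mfq(1+t_k^5)-v_\mfq(d)$ yields no bound on $p$; the equation $a^5(1+t_k^5)=dc^p$ does not rule out $c\in\mcO_K^\ast$ in general (it only forces the ideal equality $(a)^5(1+t_k^5)\mcO_K=d\mcO_K$). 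You would need either to exclude this case separately or supply an independent bound on $p$ there. The paper instead sets $\alpha_k:=a^5(1+\theta_k^5)/d$, writes $c^p=\alpha_k$, and asserts a ``unique $p_k$'' because $c$ is not a root of unity; that too is terse about what happens when $\alpha_k$ (which depends on $a$, not only on $\theta_k$) is itself a unit, so the two proofs are comparably delicate at exactly this point. But as a blind proposal yours should flag and handle the unit case rather than asserting a prime divisor of $c$ that need not exist.
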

	\begin{proof}
		By Theorem~\ref{modularity result of elliptic curve over totally real}, there exist only finitely many elliptic curves over $K$ up to $\bar{K}$ isomorphism that are not modular. Let $j_1,\ldots,j_s \in K$ be the $j$-invariants of those elliptic curves. Then 
		$$ j_E=2^{8} \frac{(5(a^2+b^2)^2-3\phi_5(a,b))^3}{(a+b)^2(a^5+b^5)^2} =2^{8}\frac{\left(5(1+\theta^2)^2(1+\theta)-3(1+\theta^5) \right)^3}{(1+\theta)^5(1+\theta^5)^2},$$
		where $\theta= \frac{b}{a}$.
		For each $i=1,2,\ldots,s$, the equation $j_E=j_i$ has at most fifteen solutions in $K$. So, there exists 
		$\theta_1, \theta_2, ..., \theta_t \in K$ with $t\leq 15s$ such that $E$ is modular for all $\theta \notin\{\theta_1, \theta_2, ..., \theta_t\}$.
		If $\theta= \theta_k$ for some $k \in \{1, 2, \ldots, t \}$, then $\frac{b}{a} =\theta_k$. This gives $d\frac{c^p}{a^5}=1+\theta_k^5$, and hence  $c^p=\frac{a^5(1+\theta_k^5)}{d}=\alpha_k$ (say). Since $c \neq \pm1$ and $K$ is totally real by assumption, there is at most one prime $p= p_k$ (say) satisfying the equation $c^p= \alpha_k$.
		Hence the proof follows by considering $p >D:=\max \{p_1,...,p_t\}$.
	\end{proof}
	
	\subsection{Reduction type}
	In this subsection, we will study the reduction type of Frey curve $E:= E_{a,b,c}$ in~\eqref{Frey curve result2} at primes $\mfq \in P_K$. To do this, we need the following lemma.
	\begin{lem}
		\label{coprime of a+b, phi(a,b)}
		Let $(a,b,c) \in \mcO_K^3$ be a non-trivial primitive solution to the equation  $x^5+y^5=dz^p$, and let $\mfq \in P_K$. 
		\begin{enumerate}
			\item If $\mfq |a+b$, then $\phi_5(a,b) \equiv 5a^2b^2 \pmod{\mfq^2}$.
			\item $a+b$ and $\phi_5(a,b)$ are coprime outside $S_{K,5}$.
		\end{enumerate}
	\end{lem}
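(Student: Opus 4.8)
The plan is to prove (1) by a direct congruence computation and then deduce (2) from (1) together with the structure of $\phi_5$. For part (1), I would work modulo $\mfq^2$ under the assumption $\mfq \mid a+b$, i.e. $b \equiv -a \pmod{\mfq}$. Write $b = -a + \epsilon$ where $\mfq \mid \epsilon$, so $\epsilon^2 \equiv 0 \pmod{\mfq^2}$. Since $\phi_5(a,b) = a^4 - a^3 b + a^2 b^2 - a b^3 + b^4$, I would substitute $b = -a+\epsilon$ and expand, discarding all terms divisible by $\epsilon^2$. A cleaner route: note $\phi_5(a,b) = \frac{a^5+b^5}{a+b}$, and differentiate the identity $a^5 + b^5 = (a+b)\phi_5(a,b)$ with respect to $b$ to get $5b^4 = \phi_5(a,b) + (a+b)\partial_b\phi_5(a,b)$. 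Evaluating at $b \equiv -a \pmod{\mfq}$ gives $\phi_5(a,b) \equiv 5b^4 \equiv 5a^4 \pmod{\mfq}$ to first order, but to get the statement modulo $\mfq^2$ one writes $\phi_5(a,b) - 5a^2b^2 = a^4 - a^3 b - 4a^2b^2 - ab^3 + b^4$ and checks this polynomial is divisible by $(a+b)^2$ in $\mathbb{Z}[a,b]$ — indeed $a^4 - a^3 b - 4a^2 b^2 - ab^3 + b^4 = (a+b)^2(a^2 - 3ab + b^2)$, which one verifies by direct expansion. Hence $\mfq \mid a+b$ implies $\mfq^2 \mid \phi_5(a,b) - 5a^2b^2$, which is exactly (1).

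For part (2), suppose $\mfq$ is a prime of $K$ with $\mfq \notin S_{K,5}$ dividing both $a+b$ and $\phi_5(a,b)$. By (1), $\mfq^2 \mid \phi_5(a,b) - 5a^2b^2$, and since $\mfq \mid \phi_5(a,b)$ we get $\mfq \mid 5a^2b^2$; as $\mfq \nmid 5$, this forces $\mfq \mid ab$. Since $(a,b,c)$ is primitive and $\mfq \mid a+b$, if $\mfq \mid a$ then $\mfq \mid b$ (and vice versa), contradicting primitivity unless we rule this out; more carefully, $\mfq \mid a+b$ together with $\mfq \mid a$ gives $\mfq \mid b$, hence $\mfq \mid \gcd(a,b)$, contradicting $a\mcO_K + b\mcO_K + c\mcO_K = \mcO_K$ (primitivity would then force $\mfq \mid c$ as well — impossible). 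So no such $\mfq$ exists, proving (2).

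The routine but slightly delicate point is the polynomial identity $a^4 - a^3b - 4a^2b^2 - ab^3 + b^4 = (a+b)^2(a^2 - 3ab + b^2)$; I expect the main (mild) obstacle is simply getting this factorization and the resulting congruence modulo $\mfq^2$ exactly right, since the naive first-order argument only yields information modulo $\mfq$. Everything else is bookkeeping with the primitivity hypothesis and the exclusion of primes above $5$. Note this lemma is the analogue, for the signature $(5,5,p)$ Frey curve of~\eqref{Frey curve result2}, of Lemma~\ref{coprime for f_k} used in the $(r,r,p)$ setting, and it will feed into the subsequent analysis of the reduction type of $E$ at primes dividing $(a+b)(a^5+b^5)$.
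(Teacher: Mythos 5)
Your proof of part (1) is correct and, while the algebra is packaged a bit differently, it amounts to the same computation as the paper's: you factor $\phi_5(a,b)-5a^2b^2=(a+b)^2(a^2-3ab+b^2)$ directly, whereas the paper writes $\phi_5(a,b)=(a^2+b^2)^2-ab(a^2+b^2+ab)$ and reduces using $a^2+b^2\equiv -2ab\pmod{\mfq^2}$. Both identities check out and give the congruence modulo $\mfq^2$ cleanly; your factorization is arguably the more transparent route.

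Part (2) has a small logical slip in the final step. After deducing $\mfq\mid ab$ and hence $\mfq\mid a$ and $\mfq\mid b$ (using $\mfq\mid a+b$), you claim this ``contradicts $a\mcO_K+b\mcO_K+c\mcO_K=\mcO_K$'' and that ``primitivity would then force $\mfq\mid c$.'' Primitivity alone does not force $\mfq\mid c$: if $\mfq$ divides $a$ and $b$ but not $c$, the ideal $a\mcO_K+b\mcO_K+c\mcO_K$ is still all of $\mcO_K$, so there is no contradiction yet. The extra input you need is Remark~\ref{coprime of a,b,c}: because $d$ is assumed $5$-th power free, any non-trivial primitive solution has $a,b,c$ pairwise coprime, so $\mfq\mid a$ and $\mfq\mid b$ is already impossible. (Equivalently: $\mfq\mid a$ and $\mfq\mid b$ give $\mfq^5\mid a^5+b^5=dc^p$; if $\mfq\nmid c$ this forces $\mfq^5\mid d$, contradicting $5$-th power freeness, and if $\mfq\mid c$ it contradicts primitivity.) With that citation or inline argument supplied, your proof of (2) matches the paper's.
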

	
	\begin{proof}
		Let $\mfq |a+b$. Then 
		$(a+b)^2 \equiv 0 \pmod {\mfq^2}$. So $$\phi_5(a,b) = (a^2+b^2)^2-ab(a^2+b^2+ab) \equiv 4a^2b^2+a^2b^2 = 5a^2b^2 \pmod {\mfq^2}.$$  This completes the proof of (1).
		
		Let $\mfq \in P_K \setminus S_{K,5}$. Suppose $\mfq$ divide both $a+b$ and $\phi_5(a,b)$. By (1), we have $\phi_5(a,b) \equiv 5a^2b^2 \pmod{\mfq^2}$. Since $\mfq |\phi_5(a,b)$ and $\mfq\notin S_{K,5}$, we have $\mfq |ab$. Since $\mfq |a+b$,  $\mfq$ divides both $a$ and $b$, which is not possible by Remark~\ref{coprime of a,b,c} since $(a,b,c)$ is primitive. This completes the proof of (2).
	\end{proof}
	
	The following lemma characterizes the type of reduction of the Frey curve $E:= E_{a,b,c}$ in~\eqref{Frey curve result2} at primes $\mfq$ away from $S_{K, 10d}$.
	\begin{lem}
		\label{reduction away from S_{K,10d}}
		Let $(a,b,c) \in \mcO_K^3$ be a non-trivial primitive solution to the equation  $x^5+y^5=dz^p$. Let $E$ be the Frey curve attached to $(a,b,c)$ as in~\eqref{Frey curve result2}. Then at all primes  $\mfq \in  P_K\setminus S_{K, 10d}$, $E$ is minimal, semistable at $\mfq$ and $p | v_\mfq(\Delta_E)$. Let $\mfn$ be the conductor of $E$ and $\mfn_p$ be as in \eqref{conductor of elliptic curve}. Then,
		\begin{equation}
			\label{conductor of E result2}
			\mfn=\prod_{\mfp \in S_{K, 10d}}\mfp^{r_\mfp} \prod_{\mfq|a^5+b^5,\ \mfq \notin S_{K, 10d}}\mfq, \ 
			\mfn_p=\prod_{\mfp \in S_{K, 10d}}\mfp^{r_\mfp^{\prime}},
		\end{equation}
		where $0 \leq r_\mfp^{\prime} \leq r_\mfp \leq 2+3v_\mfp(3)+ 6v_\mfp(2)$ for $\mfp \in S_{K,10d}$.
	\end{lem}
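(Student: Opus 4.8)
The plan is to run the standard Tate-algorithm / minimal-model analysis on the Frey curve $E$ in \eqref{Frey curve result2} prime by prime, exactly as in the proof of Theorem~\ref{semi stable red of Frey curve}. Recall the invariants $\Delta_E = 2^4 5^3 (a+b)^2 (a^5+b^5)^2$ and $c_4 = 2^4 \cdot 5 \, (5(a^2+b^2)^2 - 3\phi_5(a,b))$. Fix $\mfq \in P_K \setminus S_{K,10d}$, so $\mfq \nmid 2$, $\mfq \nmid 5$ and $\mfq \nmid d$. If $\mfq \nmid \Delta_E$, then $E$ has good reduction at $\mfq$ and $p \mid v_\mfq(\Delta_E) = 0$ trivially. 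So assume $\mfq \mid \Delta_E$; then $\mfq \mid (a+b)(a^5+b^5) = (a+b)^2 \phi_5(a,b)$, i.e.\ $\mfq \mid (a+b)$ or $\mfq \mid \phi_5(a,b)$.

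Next I would separate the two cases using Lemma~\ref{coprime of a+b, phi(a,b)}(2), which says $a+b$ and $\phi_5(a,b)$ are coprime outside $S_{K,5}$, hence (as $\mfq \notin S_{K,5}$) $\mfq$ divides exactly one of them. In either case one checks $\mfq \nmid c_4$: if $\mfq \mid \phi_5(a,b)$ then $c_4 \equiv 2^4 \cdot 5 \cdot 5 (a^2+b^2)^2 \pmod{\mfq}$, and $\mfq \nmid (a^2+b^2)$ since $\mfq \mid \phi_5(a,b)$ together with $\mfq \mid (a^2+b^2)$ would force $\mfq \mid ab$ (from $\phi_5 = (a^2+b^2)^2 - ab(a^2+ab+b^2)$) and then $\mfq \mid a, b$, contradicting primitivity via Remark~\ref{coprime of a,b,c}; if $\mfq \mid (a+b)$ then by Lemma~\ref{coprime of a+b, phi(a,b)}(1), $\phi_5(a,b) \equiv 5a^2b^2 \pmod{\mfq^2}$, so modulo $\mfq$ we get $c_4 \equiv 2^4 \cdot 5 (5(a^2+b^2)^2 - 15 a^2 b^2)$, and $\mfq \nmid ab$ (else $\mfq \mid a,b$), while $a^2+b^2 \equiv 2ab \pmod{\mfq}$ gives $5(a^2+b^2)^2 - 15a^2b^2 \equiv 20a^2b^2 - 15a^2b^2 = 5a^2b^2 \not\equiv 0 \pmod{\mfq}$. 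Thus $\mfq \nmid c_4$, so the given model is minimal at $\mfq$ and $E$ has multiplicative reduction there. For the $p$-divisibility, write the ideal $(a^5+b^5)\mcO_K = (a+b)\phi_5(a,b)\mcO_K$; since $dc^p = a^5+b^5$ and $\mfq \nmid d$, we have $v_\mfq(a^5+b^5) = p\, v_\mfq(c)$, whence $v_\mfq(\Delta_E) = 2 v_\mfq((a+b)(a^5+b^5)) = 2 v_\mfq((a^5+b^5)^2/\phi_5 \cdot \phi_5)$— more directly, $v_\mfq(\Delta_E) = 2 v_\mfq(a+b) + 2 v_\mfq(a^5+b^5)$, and exactly one of $v_\mfq(a+b)$, $v_\mfq(\phi_5(a,b))$ is zero, so $v_\mfq(a+b) + v_\mfq(\phi_5(a,b)) = v_\mfq(a^5+b^5) = p\, v_\mfq(c)$, giving $v_\mfq(\Delta_E) = 2 v_\mfq(a^5+b^5) = 2p\, v_\mfq(c)$; hence $p \mid v_\mfq(\Delta_E)$.

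Finally, to obtain the stated shape of the conductor $\mfn$ and of $\mfn_p$: by the multiplicative-reduction analysis above, every prime $\mfq \mid a^5+b^5$ with $\mfq \notin S_{K,10d}$ contributes exactly $\mfq^1$ to $\mfn$, and contributes nothing to $\mfn_p$ because $p \mid v_\mfq(\Delta_\mfq)$ (so $\mfq \mid \mfm_p$ in the notation of \eqref{conductor of elliptic curve}). All remaining ramification of $\mfn$ is supported on $S_{K,10d}$, and there the exponents $r_\mfp$ (and the possibly smaller $r_\mfp'$ for $\mfn_p$) are bounded by the general local bound $v_\mfp(\mfn) \leq 2 + 3 v_\mfp(3) + 6 v_\mfp(2)$ of \cite[Theorem IV.10.4]{S94}, exactly as in the proof of Theorem~\ref{semi stable red of Frey curve}. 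The main obstacle is purely computational: verifying $\mfq \nmid c_4$ in the case $\mfq \mid (a+b)$, which needs Lemma~\ref{coprime of a+b, phi(a,b)}(1) to compute $\phi_5(a,b) \bmod \mfq^2$ and then a short congruence manipulation using $a \equiv -b \pmod{\mfq}$; once this is in hand, everything else is bookkeeping with valuations and the ideal identity $dc^p = a^5+b^5$.
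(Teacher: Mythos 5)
Your proposal follows the paper's proof essentially line by line: the same split on $\mfq \mid a+b$ versus $\mfq \mid \phi_5(a,b)$, the same congruence computation (via Lemma~\ref{coprime of a+b, phi(a,b)}) to show $\mfq \nmid c_4$ and hence multiplicative reduction, the same valuation argument from $dc^p = a^5+b^5$ to get $p \mid v_\mfq(\Delta_E)$, and the same appeal to \cite[Theorem IV.10.4]{S94} for the bounds on $r_\mfp$. One small arithmetic slip (which does not affect the conclusion): when $\mfq \mid a+b$ you have $v_\mfq(\phi_5(a,b))=0$, so $v_\mfq(a+b)=v_\mfq(a^5+b^5)$ and therefore $v_\mfq(\Delta_E)=2v_\mfq(a+b)+2v_\mfq(a^5+b^5)=4p\,v_\mfq(c)$, not $2p\,v_\mfq(c)$ as stated; $p\mid v_\mfq(\Delta_E)$ still holds. (Similarly, $a\equiv -b\pmod{\mfq}$ gives $a^2+b^2\equiv -2ab$, not $2ab$, but since you only use the square the sign is immaterial.)
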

	\begin{proof}
		Here $c_4=2^4 5 (5(a^2+b^2)^2-3\phi_5(a,b)),\ \Delta_E=2^4 5^3 (a+b)^2(a^5+b^5)^2$. 
		Let $\mfq \notin  S_{K, 10d}$ be a prime of $K$. 
		\begin{enumerate}
			\item  If $\mfq \nmid \Delta_E$, then $E$ has good reduction at $\mfq$ and trivially $p |v_\mfq(\Delta_E)$.
			
			\item Recall that $\mfq \nmid 10d$. If $\mfq|\Delta_E$, then $\mfq|a^5+b^5$.
			Then either $\mfq |a+b$ or $\mfq | \phi_5(a,b)$. Note that here $\mfq \nmid ab$. Otherwise, if $\mfq |ab$, then $\mfq |a$ or  $\mfq |b$. Since  $\mfq|a^5+b^5$, $\mfq$ divide both $a$ and $b$ which is not possible by Remark~\ref{coprime of a,b,c}.
			\begin{itemize}
				\item If $\mfq |a+b$, then by Lemma~\ref{coprime of a+b, phi(a,b)}, we have $\phi_5(a,b) \equiv 5a^2b^2 \pmod {q^2}$ and $\mfq \nmid \phi_5(a,b)$. This gives $c_4 \equiv 2^4 . 5 (5 . 4 a^2b^2-3 . 5a^2b^2)= 2^45^2a^2b^2 \pmod {\mfq^2}$.  Hence $\mfq\nmid c_4$.
				Here $v_\mfq(a+b)= v_\mfq(a^5+b^5)=v_\mfq(dc^p)= pv_\mfq(c)$. Now $v_\mfq(\Delta_E)= 4 v_\mfq(a+b)+ 2 v_\mfq(\phi_5(a,b)) = 4pv_\mfq(c)$. This gives $p |v_\mfq(\Delta_E)$.
				
				\item If $\mfq | \phi_5(a,b)= (a^2+b^2)^2-ab(a^2+b^2+ab)$, then by Lemma~\ref{coprime of a+b, phi(a,b)}, we have $\mfq \nmid a+b$. Further, as $\mfq \nmid ab$, we deduce that $\mfq \nmid a^2+b^2$. Hence we conclude that $\mfq\nmid c_4$.
				Observe that $v_\mfq(\phi_5(a,b))= v_\mfq(a^5+b^5)=v_\mfq(dc^p)= pv_\mfq(c)$. Now $v_\mfq(\Delta_E)= 2 v_\mfq(\phi_5(a,b)) = 2pv_\mfq(c)$. This gives us that  $p |v_\mfq(\Delta_E)$.
				
			\end{itemize} 
			Thus, in either case we have, $\mfq\nmid c_4$, hence $E$ is minimal and has multiplicative reduction at $\mfq$. 
		\end{enumerate}
		Finally, for $\mfp \in S_{K, 10d}$, the bounds on $r_\mfp$ follow from \cite[Theorem IV.10.4]{S94}.
	\end{proof}
	The following lemma determines the type of reduction of $E_{a,b,c}$ in~\eqref{Frey curve result2} at $\mfP \in S_{K, 2}$.
	\begin{lem}
		\label{reduction on T and S main result2}
		Let $(a,b,c) \in \mcO_K^3$ be a non-trivial primitive solution to the equation  $x^5+y^5=dz^p$ with exponent $p > 4|v_\mfP(d)- 2v_\mfP(2)|$, and let $E$ be the Frey curve attached to $(a,b,c)$ as in~\eqref{Frey curve result2}.
		For $\mfP \in S_{K, 2}$, if $v_\mfP(d)> 4v_\mfP(2)$ and $\mfP \nmid c$, then $\ v_\mfP(j_E) < 0$ and $p \nmid v_\mfP(j_E)$. In particular, by Lemma~\ref{criteria for potentially multiplicative reduction}, we have  $p | \#\bar{\rho}_{E,p}(I_\mfP)$.
	\end{lem}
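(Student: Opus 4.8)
The plan is to compute $v_\mfP(j_E)$ directly from the explicit formula
$j_E = 2^{8}\bigl(5(a^2+b^2)^2-3\phi_5(a,b)\bigr)^3/\bigl((a+b)^2(a^5+b^5)^2\bigr)$
recorded with~\eqref{Frey curve result2}. First I would dispose of the easy preliminaries: since $\mfP\in S_{K,2}$ we have $\mfP\nmid 5$ and $\mfP\nmid p$ (as $p\geq 5$ is odd), and by Remark~\ref{coprime of a,b,c} the entries $a,b,c$ are pairwise coprime, so in particular $\mfP\nmid ab$. Because $\mfP\nmid c$ and $v_\mfP(d)>4v_\mfP(2)>0$, we get $v_\mfP(a^5+b^5)=v_\mfP(dc^p)=v_\mfP(d)$, hence $\mfP$ divides $(a+b)\phi_5(a,b)=a^5+b^5$. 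By Lemma~\ref{coprime of a+b, phi(a,b)}(2), $a+b$ and $\phi_5(a,b)$ are coprime outside $S_{K,5}$ and $\mfP\notin S_{K,5}$, so $\mfP$ divides exactly one of them; this splits the argument into two cases.

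In the case $\mfP\mid a+b$, we have $v_\mfP(a+b)=v_\mfP(d)$ and $v_\mfP(\phi_5(a,b))=0$. Here I would use the rearrangement $5(a^2+b^2)^2-3\phi_5(a,b)=2(a^2+b^2)^2+3ab(a+b)^2-3a^2b^2$ (obtained from $\phi_5(a,b)=(a^2+b^2)^2-ab(a+b)^2+a^2b^2$). Since $\mfP\mid 2$ and $\mfP\mid a+b$ give $v_\mfP(a^2+b^2)=v_\mfP(2)>0$ — no cancellation can occur in $a^2+b^2=(a+b)^2-2ab$ because $v_\mfP((a+b)^2)=2v_\mfP(d)>v_\mfP(2)$ while $v_\mfP(2ab)=v_\mfP(2)$ — the first two summands have strictly positive valuation while $v_\mfP(3a^2b^2)=0$, so $v_\mfP(5(a^2+b^2)^2-3\phi_5(a,b))=0$. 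Substituting into the $j$-formula gives $v_\mfP(j_E)=8v_\mfP(2)-2v_\mfP(a+b)-2v_\mfP(a^5+b^5)=8v_\mfP(2)-4v_\mfP(d)$. In the case $\mfP\mid\phi_5(a,b)$, we have $v_\mfP(\phi_5(a,b))=v_\mfP(d)$ and $v_\mfP(a+b)=0$; then $v_\mfP(a^2+b^2)=0$ (again from $a^2+b^2=(a+b)^2-2ab$, now with $v_\mfP((a+b)^2)=0<v_\mfP(2)$), so $v_\mfP(5(a^2+b^2)^2)=0<v_\mfP(d)=v_\mfP(3\phi_5(a,b))$ and hence $v_\mfP(5(a^2+b^2)^2-3\phi_5(a,b))=0$, giving $v_\mfP(j_E)=8v_\mfP(2)-2v_\mfP(a^5+b^5)=8v_\mfP(2)-2v_\mfP(d)$.

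Finally, in either case the hypothesis $v_\mfP(d)>4v_\mfP(2)$ forces $v_\mfP(j_E)<0$, and $|v_\mfP(j_E)|$ equals $4(v_\mfP(d)-2v_\mfP(2))$ or $2(v_\mfP(d)-4v_\mfP(2))$; both quantities are at most $4\bigl(v_\mfP(d)-2v_\mfP(2)\bigr)=4|v_\mfP(d)-2v_\mfP(2)|<p$, so $p\nmid v_\mfP(j_E)$. Since $\mfP\nmid p$, Lemma~\ref{criteria for potentially multiplicative reduction} then yields $p\mid\#\bar{\rho}_{E,p}(I_\mfP)$. The only point needing genuine care is the valuation bookkeeping for the $c_4$-part $5(a^2+b^2)^2-3\phi_5(a,b)$: one must verify that $v_\mfP(a^2+b^2)$ does not drop below what each case prescribes, and this is precisely where $\mfP\mid 2$ is used, since it guarantees $v_\mfP(2ab)>0$ and thereby rules out cancellation in $a^2+b^2=(a+b)^2-2ab$. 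Everything else is routine substitution into the $j$-invariant formula.
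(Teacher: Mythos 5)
Your argument is correct and follows essentially the same route as the paper: split into the cases $\mfP\mid a+b$ and $\mfP\mid\phi_5(a,b)$ via Lemma~\ref{coprime of a+b, phi(a,b)}, observe that in each case the factor $5(a^2+b^2)^2-3\phi_5(a,b)$ has $\mfP$-valuation zero, and then read off $v_\mfP(j_E)$ directly. The only (harmless) extra work is your rearrangement $5(a^2+b^2)^2-3\phi_5(a,b)=2(a^2+b^2)^2+3ab(a+b)^2-3a^2b^2$ in the first case; the paper reaches the same conclusion more economically by noting that $\mfP\mid a^2+b^2$ while $\mfP\nmid 3\phi_5(a,b)$, so their difference already has valuation zero.
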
 
	{\it Proof.}
	Recall that $ j_E=2^{8} \frac{(5(a^2+b^2)^2-3\phi_5(a,b))^3}{(a+b)^2(a^5+b^5)^2}$. Since $\mfP|d$, it follows that $\mfP |a^5+b^5$. Then either $\mfP |a+b$ or $\mfP | \phi_5(a,b)$. 
	\begin{itemize}
		\item If $\mfP |a+b$, then by Lemma~\ref{coprime of a+b, phi(a,b)}, we have $\mfP \nmid \phi_5(a,b)$. As $\mfP|a+b$, we have that $\mfP |a^2+b^2$. Since $\mfP \nmid c$, it follows that $v_\mfP(d)= v_\mfP(dc^p)= v_\mfP(a^5+b^5)= v_\mfP(a+b)$. Then $v_\mfP(j_E)= 8v_\mfP(2)- 2v_\mfP(a+b)-2v_\mfP(a^5+b^5)= 8v_\mfP(2)-4v_\mfP(d)$. As $v_\mfP(d)> 4v_\mfP(2)$, we get that $v_\mfP(j_E) <0$. Since $p > 4v_\mfP(d)- 8v_\mfP(2)$, we obtain that $p \nmid v_\mfP(j_E)$. 
		
		\item If $\mfP| \phi_5(a,b)$, then by Lemma~\ref{coprime of a+b, phi(a,b)}, we have that $\mfP \nmid a+b$ and hence $\mfP \nmid a^2+b^2$. So, $v_\mfP(d)= v_\mfP(a^5+b^5)= v_\mfP(\phi_5(a,b))$. Then $v_\mfP(j_E)=  8v_\mfP(2)- 2v_\mfP(d)$. As $v_\mfP(d)> 4v_\mfP(2)$, we have that $v_\mfP(j_E) <0$. Since $p > 2v_\mfP(d)- 8v_\mfP(2),$ it follows that $p \nmid v_\mfP(j_E)$.
	\end{itemize}
	This completes the proof. \qed

\subsection{An auxiliary result for Theorem~\ref{main result2 for (r,r,p)}}
The next result is a key ingredient in the proof of Theorem~\ref{main result2 for (r,r,p)}. Recall that for any $(a,b,c) \in W_K$, we have $c\neq \pm1$ and $\mfP \nmid c$ for all $\mfP \in S_{K,2}$.
\begin{prop}
	\label{auxilary result for main result2}
	Let $K$ be a totally real number field and let $d\in \mcO_K \setminus \{0\}$. Then, there is a constant $V:=V_{K,d}>0$ (depending on $K,d$) such that the following holds:
	Let $(a,b,c) \in W_K$ be a solution to the equation $x^5+y^5=dz^p$ of exponent $p>V$. Let $E/K$ be the Frey curve given in~\eqref{Frey curve result1}. Then, there exists an elliptic curve $E^\prime/K$ such that:
	\begin{enumerate}
		\item $E^\prime/K$ has good reduction away from $S_{K, 10d}$;
		\item $E^\prime/K$ has a non-trivial $2$-torsion point i.e. $ E'(K)[2] \neq \{O\}$;
		\item $\bar{\rho}_{E,p} \sim\bar{\rho}_{E^\prime,p}$ (as $G_{K}$ modules);
		\item Let $\mfP \in S_{K, 2}$. If $v_\mfP(d)> 4v_\mfP(2)$, then $v_\mfP(j_{E^\prime})<0$.
	\end{enumerate}
\end{prop}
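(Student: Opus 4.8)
The plan is to follow the blueprint of the proof of Proposition~\ref{auxilary result for main result1}, replacing $K^+$ by $K$, the set $S_{K^+,2rd}$ by $S_{K,10d}$, and the Frey curve~\eqref{Frey curve result1} by~\eqref{Frey curve result2}, and using the lemmas tailored to this new curve.

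First I would record, via Lemma~\ref{reduction away from S_{K,10d}}, that $E/K$ in~\eqref{Frey curve result2} is semi-stable at every prime outside $S_{K,10d}$ and satisfies $p\mid v_\mfq(\Delta_E)$ there; in particular conditions (4) and (5) of Theorem~\ref{level lowering of mod $p$ repr} hold at all $\mfq\mid p$ once $p$ is large enough that no prime of $S_{K,10d}$ lies above $p$, while condition (1) holds for $p>2[K:\Q]+1$. By Theorem~\ref{modularity result for main result2}, $E$ is modular for $p>D_{K,d}$, and after passing to the (totally real) Galois closure of $K$, Theorem~\ref{irreducibility of mod $P$ representation} gives the irreducibility of $\bar\rho_{E,p}$ for $p$ beyond the associated effective constant, hence also as a $G_K$-representation. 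Theorem~\ref{level lowering of mod $p$ repr} then produces a Hilbert modular newform $f$ over $K$ of parallel weight $2$ and level $\mfn_p$, together with $\lambda\mid p$ in $\Q_f$, with $\bar\rho_{E,p}\sim\bar\rho_{f,\lambda}$. Since the exponents in $\mfn_p$ in~\eqref{conductor of E result2} are bounded independently of $p$, only finitely many levels occur; enlarging $p$ and invoking~\cite[Proposition~15.4.2]{C07} as in~\cite[\S4]{FS15}, I may assume $\Q_f=\Q$.

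Next I would use the hypothesis $v_\mfP(d)>4v_\mfP(2)$ together with $\mfP\nmid c$ (valid since $(a,b,c)\in W_K$): for $p$ past the explicit bound of Lemma~\ref{reduction on T and S main result2}, that lemma gives $v_\mfP(j_E)<0$, $p\nmid v_\mfP(j_E)$, and $p\mid\#\bar\rho_{E,p}(I_\mfP)$. Hypotheses (1) and (2) of Theorem~\ref{FS partial result of E-S conj} are then met at $\mfq=\mfP$, and hypothesis (3) is arranged by discarding the finitely many $p$ dividing $\mathrm{Norm}(K/\Q)(\mfP)\pm1$; this yields an elliptic curve $E_f/K$ of conductor $\mfn_p$ with $\bar\rho_{E,p}\sim\bar\rho_{E_f,p}$. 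Because $\mfn_p$ is supported on $S_{K,10d}$, $E_f$ has good reduction away from $S_{K,10d}$, giving (1); defining $V=V_{K,d}$ as the maximum of all lower bounds on $p$ met so far gives (3) for $p>V$. For (4), combine Lemma~\ref{reduction on T and S main result2} with $\bar\rho_{E,p}\sim\bar\rho_{E_f,p}$ (so $p\mid\#\bar\rho_{E_f,p}(I_\mfP)$) and Lemma~\ref{criteria for potentially multiplicative reduction} applied to $E_f$ and $\mfP$, obtaining $v_\mfP(j_{E_f})<0$.

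For (2), I would note that~\eqref{Frey curve result2} has the visible rational $2$-torsion point $(0,0)$, since $x^3-5(a^2+b^2)x^2+5\phi_5(a,b)x=x\big(x^2-5(a^2+b^2)x+5\phi_5(a,b)\big)$, so $E(\bar{\Q})[2]$ has a $G_K$-stable line; then, after enlarging $V$ by a further effective amount and possibly replacing $E_f$ by an isogenous curve $E'$, one arranges $E'(K)[2]\neq\{O\}$ while preserving $\bar\rho_{E,p}\sim\bar\rho_{E',p}$ and good reduction away from $S_{K,10d}$, exactly by the mechanism of~\cite[\S4]{FS15} resting on~\cite[Proposition~15.4.2]{C07}. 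I expect this last transfer --- descending to $\Q_f=\Q$ and moving within the isogeny class of $E_f$ to recover a rational $2$-torsion point while keeping the mod-$p$ isomorphism and tracking all effective constants --- to be the main obstacle, as it is the only place where the weaker ``non-trivial $2$-torsion'' conclusion (rather than full $2$-torsion) is essential and where the constant bookkeeping is most delicate.
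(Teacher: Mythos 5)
Your proposal is correct and matches the paper's own argument step for step: Lemma~\ref{reduction away from S_{K,10d}} for semi-stability and $p\mid v_\mfq(\Delta_E)$, Theorem~\ref{modularity result for main result2} for modularity, Theorem~\ref{irreducibility of mod $P$ representation} (over the Galois closure) for irreducibility, Theorem~\ref{level lowering of mod $p$ repr} plus \cite[Proposition 15.4.2]{C07} for level lowering with $\Q_f=\Q$, Lemma~\ref{reduction on T and S main result2} and Theorem~\ref{FS partial result of E-S conj} to extract $E_f$, and then \cite[Proposition 15.4.2]{C07} again, using the visible $2$-torsion point $(0,0)$ on the Frey curve~\eqref{Frey curve result2}, to pass to an isogenous $E'$ with $E'(K)[2]\neq\{O\}$; conclusion (4) comes from Lemmas~\ref{reduction on T and S main result2} and~\ref{criteria for potentially multiplicative reduction} exactly as you say (you should state it for $E'$ rather than $E_f$, since $j$-invariants are not isogeny-invariant, but the residual representation argument transfers verbatim). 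You also correctly read ``Frey curve~\eqref{Frey curve result1}'' in the statement as a typo for~\eqref{Frey curve result2}.
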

{\it Proof.}
We proceed as in the proof of Proposition~\ref{auxilary result for main result1}. By Theorems~\ref{irreducibility of mod $P$ representation},~\ref{level lowering of mod $p$ repr},~\ref{FS partial result of E-S conj},~\ref{modularity result for main result2}, Lemmas~\ref{reduction away from S_{K,10d}},~\ref{reduction on T and S main result2}, there exists a constant $V:=V_{K,d}>0$ (depending on $K,d$) and an elliptic curve $E_f/K$ of conductor $\mfn_p$, where $\mfn_p$ is defined in \eqref{conductor of E result2}, such that $\bar{\rho}_{E,p} \sim \bar{\rho}_{E_f,p}$ for $p>V$.

The Frey curve $E$ has the $K$-rational $2$-torsion point $(0,0)$. After enlarging $V$ by an efficient amount and by possibly replacing $E_f$ with an isogenous curve, say $E^\prime/K$ such that $ E'(K)[2] \neq \{O\}$ and  $\bar{\rho}_{E,p} \sim\bar{\rho}_{E^\prime,p}$. This follows from~\cite[Proposition 15.4.2]{C07} and the fact $ E(K)[2] \neq \{O\}$ (cf.~\cite[page 10]{KS23 Diophantine2} for more details).   

Since the conductor of $E_f$ is $\mfn_p$, where $\mfn_p$ is given by a product of powers of primes in $S_{K, 10d}$ (see \eqref{conductor of E result2}) and $E_f$ is isogenous to $E^\prime$, $E'/K$ has good reduction away from $S_{K, 10d}$.
Let $\mfP \in S_{K, 2}$ with $v_\mfP(d)> 4v_\mfP(2)$. Then by Lemma~\ref{reduction on T and S main result2}, we have $p | \# \bar{\rho}_{E,p}(I_\mfP)=\# \bar{\rho}_{E^\prime,p}(I_\mfP)$. Finally, by Lemma~\ref{criteria for potentially multiplicative reduction}, we have $v_\mfP(j_{E^\prime})<0$. \qed

We are now ready to prove Theorem~\ref{main result2 for (r,r,p)}. 
\begin{proof}[Proof of Theorem~\ref{main result2 for r,r,p}]
	Let $V=V_{K,d}$ be as in Proposition~\ref{auxilary result for main result2}, and let $(a,b,c)\in W_K$ be a solution to the equation $x^5+y^5=dz^p$ with exponent $p>V$. By Proposition~\ref{auxilary result for main result2}, there exists an elliptic curve $E^\prime/K$ having a non-trivial $2$-torsion point.
	Therefore $E^\prime/K$ has a model of the form
	\begin{equation}
		\label{j invariant of E'}
		E^\prime:y^2=x^3+a'x^2+b'x
	\end{equation}		
	for some $a',b' \in K$ with $b' \neq 0$ and $a'^2 \neq 4b'$. Indeed, the conditions $b' \neq 0$ and $a'^2 \neq 4b'$ comes from the fact that the discriminant of $E'$ is non-zero.
	Here $j_{E^\prime}=2^8 \frac{({a'}^2-3b')^3}{{b'}^2(a'^2-4b')}$. By Proposition~\ref{auxilary result for main result2}, $E^\prime$ has good reduction away from $ S_{K, 10d}$, and hence $j_{E^\prime} \in \mcO_{ S_{K, 10d}}$. 
	
	Define $\lambda := \frac{{a'}^2}{b'}$ and $ \mu := \lambda-4$. We say a fractional ideal $I$ in $K$ is a $S$-ideal if $I$ is generated by primes lying inside $S$. By ~\cite[Lemmas 16(i), 17(i)]{M22}, we have $\lambda, \mu \in \mcO_{S_{K, 10d}}^\ast$ and $ \lambda  \mcO_K=I^2J$, where $I$ is a fractional ideal and $J$ is a $S_{K, 10d}$-ideal. This gives $ [I]^2 =1$ in $\Cl_{S_{K, 10d}}(K)$. Since $\Cl_{S_{K, 10d}}(K)[2]=1$, we have $I=\gamma I'$ for some $\gamma \in \mcO_K$ and a $S_{K, 10d}$-ideal $I'$. Therefore, $\lambda \mcO_K=\gamma^2{I'}^2J$ and hence $(\frac{\lambda}{\gamma^2}) \mcO_K$ is a $S_{K, 10d}$-ideal. Therefore, $u:=\frac{\lambda}{\gamma^2}\in \mcO_{S_{K, 10d}}^\ast$.
	Dividing the equation $\mu +4=\lambda$ by $u$, we have 
	$\alpha +\beta =\gamma^2$, where $\alpha= \frac{\mu}{u} \in \mcO_{S_{K, 10d}}^\ast$ and $\beta =\frac{4}{u} \in \mcO_{S_{K, 10d}}^\ast$. This gives $ \alpha \beta^{-1}=\frac{\mu}{4}$. Let $\mfP \in S_{K, 2}$ be the prime satisfying the hypothesis~\eqref{assumption for main result2}. Then, we get
	\begin{equation*}
		\label{inequality for valution of mu}
		-4v_\mfP(2) \leq v_\mfP(\mu) \leq 8 v_\mfP(2).
	\end{equation*}
	Now, the proof follows from \cite[Theorem 3]{M22} and we include it for the sake of completeness.
	Writing $j_{E^\prime}$ in terms of $\mu$, we have $j_{E^\prime}= 2^8 \frac{(\mu+1)^3}{\mu}$. Hence $v_\mfP(j_{E^\prime})=8v_\mfP(2)+3 v_\mfP(\mu +1)-v_\mfP(\mu)$. 
	For $-4v_\mfP(2) \leq v_\mfP(\mu) <0$, we get $v_\mfP(\mu +1)=v_\mfP(\mu)$ and hence  $v_\mfP(j_{E^\prime})\geq 0$. Next for $v_\mfP(\mu)=0$, we obtain $v_\mfP(\mu +1)\geq 0$ and hence $v_\mfP(j_{E^\prime}) \geq 0$. Finally, for $0 <v_\mfP(\mu) \leq 8 v_\mfP(2)$, we have $v_\mfP(\mu +1)=0$ and hence $v_\mfP(j_{E^\prime})=8v_\mfP(2)- v_\mfP(\mu) \geq 0$.
	In all these cases, we get $v_\mfP(j_{E^\prime}) \geq 0$, which is a contradiction to Proposition~\ref{auxilary result for main result2}(3). This completes the proof of the theorem.
\end{proof}

\begin{proof}[Proof of Corollary~\ref{cor to main result2}]
	Suppose $(a,b,c) \in \mcO_K^3$ is a non-trivial primitive solution to $x^5+y^5=dz^p$ of exponent $p>V$ with $c\neq \pm1$ and $2 \nmid c$, where $V$ is  as chosen in Theorem~\ref{main result2 for (r,r,p)}. As the prime $\mfP$ is the unique prime  of $\mcO_K$ lying above $2$, we get that $\mfP \nmid c$ and hence $(a,b,c) \in W_K$, which is contraction to Theorem~\ref{main result2 for (r,r,p)}. This completes the first part of the corollary. The second part of the corollary follows from Corollary~\ref{cor to main result1} with $r=5$.
\end{proof}

\section{Local criteria for Theorem~\ref{main result1 for (r,r,p)}}
\label{section for local criteria}
In this section, we provide various local criteria on $K$ such that Theorem~\ref{main result1 for (r,r,p)} holds over $K^+$.
\subsection{$d$ is a rational prime}
\label{section for d is a rational prime}
In this subsection, we assume $d>2$ is a rational prime. Let $h_F^+$ denote the narrow class number of a number field $F$.

\begin{prop}
	\label{loc crit for d rational prime}
	Let $K$ be a totally real number field and let $r \geq 5$ be a rational prime. Let $K^+:=K(\zeta_r+ \zeta_r^{-1})$ and $\pi :=\zeta_r + \zeta_r^{-1}-2$. Assume that $d>2$ is a rational prime and 
	\begin{enumerate}
		\item $2 \nmid h_{K^+}^+$ and $2$ is inert in $K^+$ (let  $\mfP:=2\mcO_{K^+}$);
		\item $d \equiv 1 \pmod {\mfP^2}$, $d$ is inert in $K^+$ and $r$ is inert in $K$;
		\item the congruences $\pi \equiv \nu^2 \pmod {\mfP^5}$,  $d \equiv v^2 \pmod {\mfP^5}$ and $\pi d \equiv \delta^2 \pmod {\mfP^5}$ have no solutions for $ \nu, v, \delta \in \mcO_{K^+}/ {\mfP^5}$;
	\end{enumerate}
	Then the conclusion of Theorem~\ref{main result1 for (r,r,p)} holds over $K^+$.
\end{prop}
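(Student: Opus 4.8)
The strategy is to verify that the three local hypotheses of Proposition~\ref{loc crit for d rational prime} force every solution $(\lambda,\mu)$ of the $S_{K^+,2rd}$-unit equation~\eqref{S_K-unit solution} to satisfy the inequality~\eqref{assumption for main result1} at the single prime $\mfP=2\mcO_{K^+}$, so that Theorem~\ref{main result1 for (r,r,p)} applies directly. First I would identify the set $S_{K^+,2rd}$ explicitly: since $2,d,r$ are rational primes and, by hypotheses (1) and (2), $2$ and $d$ are inert in $K^+$ while $r$ is inert in $K$, the primes of $K^+$ above $2rd$ are $\mfP=2\mcO_{K^+}$, $\mfP_d = d\mcO_{K^+}$, together with the (at most two) primes of $K^+$ above $r$; here one uses that $K^+/K$ is ramified only above $r$ (it is a subextension of $K(\zeta_r)/K$) and $\pi=\zeta_r+\zeta_r^{-1}-2$ generates the prime above $r$ in $\Q(\zeta_r)^+$. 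Thus $\mcO_{S_{K^+,2rd}}^\ast$ is generated by $\mcO_{K^+}^\ast$ together with $2$, $d$, and a uniformizer $\pi$ at the prime(s) above $r$.

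**Reducing to a $2$-adic congruence.** The key observation is that~\eqref{assumption for main result1} at $\mfP$ with $v_\mfP(2)=1$ reads $\max\{|v_\mfP(\lambda)|,|v_\mfP(\mu)|\}\le 4$. I would argue by contradiction: if some solution violates this, then (using $\lambda+\mu=1$, so the valuations at $\mfP$ behave as in the Legendre-form analysis in the proof of Theorem~\ref{main result1 for (r,r,p)}) one of $\lambda,\mu$ has $v_\mfP$ of absolute value $\ge 5$. Writing $\lambda = \pm 2^{a} d^{b} \pi^{c} \varepsilon$ with $\varepsilon$ a unit and $|a|\ge 5$, I would look at the identity modulo $\mfP^5$. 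Since $h_{K^+}$ is odd and $2$ is inert (hypothesis (1)), the class group localized away from $\mfP$ has no $2$-torsion, which lets me control the unit $\varepsilon$ up to squares; and hypothesis (3) says precisely that none of $\pi$, $d$, $\pi d$ is a square mod $\mfP^5$. Combining $\lambda+\mu=1$ with $d\equiv 1\pmod{\mfP^2}$ (hypothesis (2)) and a $2$-adic squares computation (the unit group $(\mcO_{K^+}/\mfP^5)^\ast$ and its squares, as in standard local arguments over $\Q_2$ since $\mfP$ is inert of residue degree $[K^+:\Q]$), one derives that the only way $\lambda$ can have large $\mfP$-valuation forces $\pi$, $d$, or $\pi d$ to be a square mod $\mfP^5$, contradicting (3). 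Hence every solution satisfies~\eqref{assumption for main result1}, and Theorem~\ref{main result1 for (r,r,p)} gives the conclusion over $K^+$.

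**Main obstacle.** I expect the hard part to be the bookkeeping of hypothesis (3): translating "$v_\mfP(\lambda)$ large" into an honest congruence statement requires pinning down which combinations of the $S$-unit generators $2,d,\pi$ and a fundamental-unit square can appear, and this is where the oddness of $h_{K^+}$ and the inertness conditions are essential — they collapse the relevant class-group and unit-group contributions modulo squares so that only the residues of $\pi$, $d$, and $\pi d$ matter mod $\mfP^5$. The exponent $5 = 4v_\mfP(2)+1$ in the congruences is forced by the bound $4v_\mfP(2)$ in~\eqref{assumption for main result1}, so I would be careful that the modulus $\mfP^5$ in (3) is exactly the one that detects failure of~\eqref{assumption for main result1}; getting this matching precise, rather than the elliptic-curve input, is the crux.
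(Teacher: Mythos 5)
Your outline correctly identifies the high-level strategy (show that hypotheses (1)--(3) force every solution of the $S_{K^+,2rd}$-unit equation to satisfy~\eqref{assumption for main result1} at $\mfP=2\mcO_{K^+}$ with $v_\mfP(2)=1$, then apply Theorem~\ref{main result1 for (r,r,p)}), and you correctly locate the roles of $2\nmid h_{K^+}$ and of the three non-square congruences in hypothesis (3). However, there is a genuine gap at the crux of the argument: you claim that ``the only way $\lambda$ can have large $\mfP$-valuation forces $\pi$, $d$, or $\pi d$ to be a square mod $\mfP^5$.'' That is not what the hypotheses give. The actual direction is the reverse: one first normalizes (as in Lemma~\ref{integral}) so that $v_\mfP(\lambda')=0$ and $v_\mfP(\mu')\geq 5$, hence $\lambda'\equiv 1\pmod{\mfP^5}$, and writes $\lambda'=\alpha\pi^s d^t$ with $\alpha\in\mcO_{K^+}^\ast$ (no factor of $2$, since $v_\mfP(\lambda')=0$, which is why your ansatz $\lambda=\pm 2^a d^b\pi^c\varepsilon$ with $|a|\geq 5$ is set up on the wrong element). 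Oddness of $h_{K^+}$ together with $\pi\equiv\epsilon^2\pmod{\mfP^2}$ ($\epsilon=\zeta_r^{(r-1)/2}+\zeta_r^{-(r-1)/2}\in\mcO_{K^+}^\ast$) and $d\equiv 1\pmod{\mfP^2}$ forces $\alpha$ to be a square, and then the non-squareness of $\pi$, $d$, $\pi d$ mod $\mfP^5$ in (3) forces $s$ and $t$ to be even. The conclusion is therefore that $\lambda'$ \emph{is} a square $\gamma^2$ in $\mcO_{S_{K^+,rd}}^\ast$ --- which on its own is \emph{not} a contradiction.

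The missing idea is a descent. Choose among the finitely many solutions one with $v_\mfP(\mu')$ maximal and $\geq 5$. Writing $\mu'=1-\gamma^2=(1+\gamma)(1-\gamma)$, the difference $(1+\gamma)-(1-\gamma)=2\gamma$ has $v_\mfP=1$, so exactly one of $v_\mfP(1\pm\gamma)$ equals $1$ and the other equals $v_\mfP(\mu')-1$. Then
\[
\lambda''=\frac{(1+\gamma)^2}{4\gamma},\qquad \mu''=\frac{-(1-\gamma)^2}{4\gamma}
\]
is again a solution of the $S_{K^+,2rd}$-unit equation, and a short computation gives $v_\mfP(\mu'')=2v_\mfP(\mu')-4>v_\mfP(\mu')$ since $v_\mfP(\mu')\geq 5$. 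This contradicts the maximality of $v_\mfP(\mu')$, and that contradiction --- not a direct $2$-adic congruence --- is what closes the proof. Without this construction the squareness of $\lambda'$ leads nowhere, so your sketch as written would not compile into a proof.
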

To prove the above proposition, we need the following lemma.
\begin{lem}
	\label{integral}
	Let $(\lambda, \mu)$ be a solution to the $S_{K^+,2rd}$-unit equation
	$\lambda+\mu=1, \ \lambda, \mu \in \mcO_{S_{K^+,2rd}}^\ast$, and let $\mfP \in S_{K^+, 2}$. Then there exists $\lambda', \mu' \in \mcO_{S_{K^+,2rd}}^\ast$ with $v_\mfP(\lambda') \geq 0$ and $v_\mfP(\mu') \geq 0$ such that $\lambda'+\mu'=1$ and $	\max \left\{|v_\mfP(\lambda)|,|v_\mfP(\mu)| \right\}=	\max \left\{|v_\mfP(\lambda')|,|v_\mfP(\mu')| \right\}$.
\end{lem}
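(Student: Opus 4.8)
The plan is to exploit the symmetry of the $S$-unit equation together with the action of the group of units $\mathcal{O}_{S_{K^+,2rd}}^\ast$ on solutions. Given a solution $(\lambda,\mu)$ with $\lambda+\mu=1$, the three quantities we may freely permute (up to the substitutions $\lambda \mapsto \mu$, $\lambda \mapsto 1/\lambda$, etc.) are $\lambda$, $\mu$, and $1$; but since we must keep the right-hand side equal to $1$, the only move that preserves the shape $\lambda'+\mu'=1$ and keeps both entries $S$-units is $(\lambda,\mu)\mapsto(\mu,\lambda)$, which does nothing for the $\mfP$-valuations, and $(\lambda,\mu)\mapsto(1/\lambda,\,-\mu/\lambda)$, coming from dividing $\lambda+\mu=1$ by $\lambda$. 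This second move sends $v_\mfP(\lambda)\mapsto -v_\mfP(\lambda)$ and $v_\mfP(\mu)\mapsto v_\mfP(\mu)-v_\mfP(\lambda)$, while $\lambda'=1/\lambda$ and $\mu'=-\mu/\lambda$ are still $S$-units. So the first step is to record these two transformations explicitly and note that each sends an $S_{K^+,2rd}$-unit solution to another one.

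Next I would do a case analysis on the pair of integers $(v_\mfP(\lambda), v_\mfP(\mu))$. From $\lambda+\mu=1$ one has the standard trichotomy: either both valuations are zero, or both are equal and negative, or exactly one is strictly positive and the other is zero (this is just the ultrametric inequality applied to $\lambda+\mu=1$). In the first case $(\lambda',\mu')=(\lambda,\mu)$ works with nothing to prove. In the third case — say $v_\mfP(\lambda)>0$, $v_\mfP(\mu)=0$, so $v_\mfP(\mu)=0$ already and $|v_\mfP(\lambda)|=v_\mfP(\lambda)=v_\mfP(\lambda)\ge 0$ — again $(\lambda',\mu')=(\lambda,\mu)$ already has both valuations $\ge 0$, and the maximum of the absolute values is visibly unchanged. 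The only case requiring work is the second: $v_\mfP(\lambda)=v_\mfP(\mu)=-t$ with $t>0$. Here apply the transformation $(\lambda,\mu)\mapsto (1/\lambda,\,-\mu/\lambda) =: (\lambda',\mu')$; then $v_\mfP(\lambda')=t\ge 0$ and $v_\mfP(\mu')=v_\mfP(\mu)-v_\mfP(\lambda)=0\ge 0$, and $\max\{|v_\mfP(\lambda')|,|v_\mfP(\mu')|\}=t=\max\{|v_\mfP(\lambda)|,|v_\mfP(\mu)|\}$, as required; and one checks $\lambda'+\mu' = 1/\lambda - \mu/\lambda = (1-\mu)/\lambda = \lambda/\lambda = 1$.

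The genuinely routine-but-necessary verifications are: (i) that $1/\lambda$ and $-\mu/\lambda$ lie in $\mcO_{S_{K^+,2rd}}^\ast$, which is immediate since $\mcO_{S_{K^+,2rd}}^\ast$ is a group and $-1$ is a unit; and (ii) the valuation bookkeeping in each branch of the trichotomy. I do not anticipate a real obstacle here — the lemma is a normalization statement and the content is entirely in the observation that the $S$-unit equation is stable under the involution $\lambda\mapsto 1/\lambda$ (with the corresponding adjustment of $\mu$). The one point to be careful about is to confirm that in the "one positive, one zero" case no transformation is needed, i.e.\ that we are not forced to also move the strictly positive valuation down; but since the definition only asks for $v_\mfP(\lambda')\ge 0$ and $v_\mfP(\mu')\ge 0$ (not $=0$), nothing more is required, and the statement follows.
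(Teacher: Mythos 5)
Your proposal is correct and follows essentially the same route as the paper: in the problematic case $v_\mfP(\lambda)<0$ you divide $\lambda+\mu=1$ by $\lambda$ to get $(\lambda',\mu')=(1/\lambda,\,-\mu/\lambda)$, which is exactly the paper's choice $(\lambda',\mu')=(1/\lambda,\,\mu/(\mu-1))$ since $\mu-1=-\lambda$. Your explicit ultrametric trichotomy on $(v_\mfP(\lambda),v_\mfP(\mu))$ is a slightly more detailed presentation of the same case split, and the valuation bookkeeping is verified identically.
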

\begin{proof}
	Let $\lambda, \mu \in \mcO_{S_{K^+,2rd}}^\ast$ with $\lambda+\mu=1$. If  $v_\mfP(\lambda) \geq 0$, then $v_\mfP(\mu) \geq 0$ since $\lambda+\mu=1$. Choose $\lambda'=\lambda$ and $\mu'=\mu$, hence we are done. If  $v_\mfP(\lambda)<0$, then $v_\mfP(\mu)=v_\mfP(\lambda) < 0$ since $\lambda+\mu=1$. Choose $\lambda'= \frac{1}{\lambda}$ and $\mu'= \frac{\mu}{\mu-1}$. Clearly, $v_\mfP(\lambda') \geq 0$ and $v_\mfP(\mu') \geq 0$. In all these cases, we have $\lambda'+\mu'=1$  and $	\max \left\{|v_\mfP(\lambda)|,|v_\mfP(\mu)| \right\}=	\max \left\{|v_\mfP(\lambda')|,|v_\mfP(\mu')| \right\}$. 
\end{proof}
We now prove Proposition~\ref{loc crit for d rational prime}.
\begin{proof}[Proof of Proposition~\ref{loc crit for d rational prime}]
	To prove this proposition, it is enough to prove that every solution $(\lambda, \mu)$ to the $S_{K^+,2rd}$-unit equation
	$\lambda+\mu=1, \ \lambda, \mu \in \mcO_{S_{K^+,2rd}}^\ast$ satisfies
	$	\max \left\{|v_\mfP(\lambda)|,|v_\mfP(\mu)| \right\}\leq 4v_\mfP(2)=4.$
	Suppose there exists a solution $(\lambda, \mu)$ to the $S_{K^+,2rd}$-unit equation $\lambda+\mu=1$ such that $	\max \left\{|v_\mfP(\lambda)|,|v_\mfP(\mu)| \right\} \geq 5.$ Then by Lemma~\ref{integral}, there exist  $\lambda', \mu' \in \mcO_{S_{K^+,2rd}}^\ast$ with $\lambda'+\mu'=1$, $v_\mfP(\lambda') \geq 0$ and $v_\mfP(\mu') \geq 0$ such that $\max \left\{v_\mfP(\lambda'),v_\mfP(\mu') \right\} \geq 5$. Without loss of generality, take $v_\mfP(\mu') \geq 5$.
	Since the $S_{K^+,2rd}$-unit equation $\lambda'+\mu'=1$ has only finitely many solutions, choose $(\lambda', \mu')$ such that $v_\mfP(\mu')$ as large as possible.
	Since  $v_\mfP(\mu') \geq 5$ and $\lambda'+\mu'=1$,  we have $v_\mfP(\lambda')=0$. Hence $\lambda' \in \mcO_{S_{K^+, rd}}^\ast$ and $\lambda' \equiv 1 \pmod {\mfP^5}$. Now, we will show that $\lambda'$ is a square in $\mcO_{S_{K^+, rd}}^\ast$.
	
	Since  $\lambda' \in \mcO_{S_{K^+, rd}}^\ast$, we have 
	\begin{equation}
		\label{factor of lambda}
		\lambda'= \alpha \pi^sd^t,
	\end{equation}
	where $\alpha \in \mcO_{K^+}^\ast$ and $s,t \in \Z$. To show $\lambda'$ is a square, it is enough to show that $\alpha$ is a square and $s,t$ are even. First, we will show that $\alpha$ is a square.
	Since $\left(\zeta_r^{\frac{r-1}{2}} + \zeta_r^{-\frac{r-1}{2}} \right)^2-4=\pi$, we have $\pi \equiv \epsilon^2 \pmod {\mfP^2}$, where $\epsilon:=\zeta_r^{\frac{r-1}{2}} + \zeta_r^{-\frac{r-1}{2}} \in \mcO_{K^+}^\ast$. Since $\lambda' \equiv 1 \pmod {\mfP^5}$ and $d \equiv 1 \pmod {\mfP^2}$, by equation~\eqref{factor of lambda} we have $\alpha \equiv \beta^2 \pmod {\mfP^2}$, where $\beta:= \epsilon^{-s} \in \mcO_{K^+}^\ast$. Hence $\frac{\beta^2-\alpha}{4}\in \mcO_{K^+}$. 
	
	If possible, assume that $\alpha$ is not a square in $\mcO_{K^+}$. Consider the field $L=K^+(\sqrt{\alpha})= K^+(\frac{\beta -\sqrt{\alpha}} {2})$. Then $[L:K^+]=2$. The minimal polynomial of $\eta:=\frac{\beta -\sqrt{\alpha}} {2}$ over $K^+$ is $m_\eta(X)= X^2-\beta X+  \frac{\beta^2 - \alpha}{4} \in \mcO_{K^+}[X]$ with its discriminant $\alpha \in  \mcO_{K^+}^\ast$. 
	Therefore, $L$ is an unramified extension of degree 2 over $K^+$, which contradicts the hypothesis $2\nmid h_{K^+}^+$. Hence $\alpha$ is a square.
	
	Next, we will show that both $s$ and $t$ are even. First, assume $s$ is odd and $t$ is even. Let $s=2k+1$. Since $\lambda' \equiv 1 \pmod {\mfP^5}$, by~\eqref{factor of lambda} we have $\pi \equiv \nu^2 \pmod {\mfP^5}$ for some $\nu \in \mcO_{K^+}$, which contradicts the hypothesis $(3)$. In the other cases, proceeding in a similar way, we get a contradiction to the hypothesis $(3)$. Hence both $s$ and $t$ are even. Thus $\lambda'$ is a square in  $ \mcO_{S_{K^+, rd}}^\ast$.
	
	Now, the proof follows from \cite[Theorem 6]{M23}  and we include it for completeness.
	Let $\lambda'=\gamma^2$, for some $\gamma \in \mcO_{S_{K^+, rd}}^\ast$. This gives $\mu'= 1-\gamma^2= (1+\gamma)(1-\gamma)$. Let $t_0:= v_\mfP(\mu')$, $t_1:= v_\mfP((1+\gamma))$ and $t_2:= v_\mfP((1-\gamma))$. This gives $t_0= t_1+t_2$. Since $t_0 \geq 5$, we have either $t_1=1$ or $t_2=1$ but both can't be equal to $1$. Without loss of generality, take $t_1=1$. This gives $t_2= t_0-1$.
	Choose $\lambda''= \frac{(1+\gamma)^2}{4\gamma}$ and $\mu''= \frac{-(1-\gamma)^2}{4\gamma}$. Then $\lambda'' + \mu ''=1$. So $(\lambda'', \mu'')$ is a solution to the $S_{K^+,2rd}$-unit equation $\lambda+\mu=1$. Now $v_\mfP(\mu'')=2  t_2-2=2  t_0-4>t_0= v_\mfP(\mu')$, which is a contradiction since $v_\mfP(\mu')$ is the largest among all the solutions to the $S_{K^+,2rd}$-unit equation.
	This completes the proof of the proposition.
\end{proof}

In the following corollary, we give the local criteria when $K =\Q$.
\begin{cor}
	\label{cor1 to loc crit d prime}
	Let $r\geq 7$ be a rational prime with $r \not\equiv 1 \pmod 8$ and $\Q^+:=\Q(\zeta_r + \zeta_r^{-1})$. Suppose $d>2$ is a rational prime and 
	\begin{enumerate}
		\item $2 \nmid h_{\Q^+}^+$;
		\item $(2)$ is inert in $\Q^+$;
		\item $d \equiv 1 \pmod 4$, $d$ is inert in $K^+$;
		\item $d^{\frac{r-1}{2}}, rd^{\frac{r-1}{2}} \not\equiv 1,9,17,25 \pmod {32}$;
	\end{enumerate}
	Then the conclusion of Theorem~\ref{main result1 for (r,r,p)} holds over $\Q^+$.
\end{cor}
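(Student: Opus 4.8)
The plan is to verify that the hypotheses $(1)$--$(3)$ of Corollary~\ref{cor1 to loc crit d prime} imply the hypotheses $(1)$--$(3)$ of Proposition~\ref{loc crit for d rational prime} with $K = \Q$, $K^+ = \Q^+ = \Q(\zeta_r + \zeta_r^{-1})$, and $\mfP = 2\mcO_{\Q^+}$. Once this reduction is in place, the conclusion is immediate from Proposition~\ref{loc crit for d rational prime}. First I would note that with $K = \Q$, the field $K^+ = \Q^+$ has degree $\frac{r-1}{2}$ over $\Q$, and since $2$ is assumed inert in $\Q^+$ the residue field $\mcO_{\Q^+}/\mfP$ has order $2^{(r-1)/2}$; moreover $v_\mfP(2) = 1$, so condition~\eqref{assumption for main result1} reads $\max\{|v_\mfP(\lambda)|,|v_\mfP(\mu)|\} \le 4$, matching the bound used in the proof of Proposition~\ref{loc crit for d rational prime}. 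Hypothesis $(1)$ of the corollary is literally hypothesis $(1)$ of the proposition. For hypothesis $(2)$ of the proposition, I must check that $d \equiv 1 \pmod{\mfP^2}$, that $d$ is inert in $\Q^+$, and that $r$ is inert in $\Q = K$; the first follows from $d \equiv 1 \pmod 4$ since $\mfP^2 = 4\mcO_{\Q^+}$ (as $2$ is inert, $v_\mfP(4) = 2$), the second is assumed, and the third ($r$ inert in $\Q$) is vacuous since every rational prime is ``inert'' in $\Q$ itself.

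The substantive part is translating hypothesis $(3)$. In Proposition~\ref{loc crit for d rational prime}, condition $(3)$ asks that the three congruences $\pi \equiv \nu^2$, $d \equiv v^2$, and $\pi d \equiv \delta^2$ modulo $\mfP^5$ have no solutions, where $\pi = \zeta_r + \zeta_r^{-1} - 2$. I would exploit the identity $\pi = \epsilon^2 - 4$ with $\epsilon = \zeta_r^{(r-1)/2} + \zeta_r^{-(r-1)/2} \in \mcO_{\Q^+}^\ast$, already recorded in the proof of the proposition, so that $\pi \equiv \epsilon^2 \pmod{\mfP^2}$; but modulo $\mfP^5$ one needs finer information. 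The key observation is that $d \equiv 1 \pmod 4$ together with inertness lets me compute $d$ modulo $\mfP^5 = 32\,\mcO_{\Q^+}$ in terms of $d \bmod 32 \in \Z/32\Z \hookrightarrow \mcO_{\Q^+}/\mfP^5$, and that the squares in $(\mcO_{\Q^+}/\mfP^5)^\ast$ that also lie in the prime subring $(\Z/32\Z)^\ast$ are exactly the classes $\{1,9,17,25\} \bmod 32$ — these being the squares of odd residues mod $32$. Because $\pi = \epsilon^2 - 4$ and $\epsilon^{-s}$ absorbs the unit, the relevant quantities to test are $\pi^{(r-1)/2}$ and $d^{(r-1)/2}$ and their product modulo $32$, where the exponent $\frac{r-1}{2}$ enters through the norm/trace from $\Q^+$ to $\Q$; I would show $\pi^{(r-1)/2} \equiv r \pmod{32}$ or an equivalent explicit congruence (using that $\mathrm{Norm}_{\Q^+/\Q}(\zeta_r + \zeta_r^{-1} - 2) = \pm r$ and a $2$-adic refinement), so that the proposition's congruences $\pi \equiv \nu^2$, $d \equiv v^2$, $\pi d \equiv \delta^2 \pmod{\mfP^5}$ become, after raising to the $\frac{r-1}{2}$ power and descending to $\Z/32\Z$, precisely the conditions $r \in \{1,9,17,25\}$, $d^{(r-1)/2} \in \{1,9,17,25\}$, $r d^{(r-1)/2} \in \{1,9,17,25\}$ mod $32$. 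The hypothesis $r \not\equiv 1 \pmod 8$ handles the first (since $\{1,9,17,25\} \equiv 1 \pmod 8$, a prime with $r \not\equiv 1 \pmod 8$ cannot be a square mod $32$), and the displayed condition $(3)$ of the corollary handles the other two.

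The main obstacle I anticipate is the $2$-adic computation of $\pi^{(r-1)/2} \bmod 32$ — that is, pinning down $\mathrm{Norm}_{\Q^+/\Q}(\pi)$ and, more delicately, the image of $\pi$ itself (not merely its norm) in $\mcO_{\Q^+}/\mfP^5$ well enough to see that ``$\pi \equiv \nu^2 \pmod{\mfP^5}$ solvable'' is equivalent to a clean congruence on $r$ modulo $32$. One has $\prod_{k=1}^{(r-1)/2}(\zeta_r^k + \zeta_r^{-k} - 2) = \prod_{j=1}^{r-1}(\zeta_r^j - 1)/(\text{something}) = \pm r$ up to a unit and a power of a cyclotomic unit, and since $2$ is inert in $\Q^+$ the Frobenius at $\mfP$ permutes the conjugates of $\pi$ transitively, so $\pi, \pi^2, \dots$ cycle through all conjugates; the product of all conjugates is the norm, which is why the exponent $\frac{r-1}{2}$ appears. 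Making this precise to the modulus $32$ (rather than just $4$) requires care because the cyclotomic units $\zeta_r^k + \zeta_r^{-k} - 2$ versus $\zeta_r^k + \zeta_r^{-k} + 2 = (\zeta_r^{k/2} + \zeta_r^{-k/2})^2$ differ, and one must track which power of $2$ (none, since everything in sight is a $\mfP$-adic unit after the inertness assumption) and which sign intervenes. Once that lemma is established, the rest is the bookkeeping described above. I would present the argument as: (i) reduce to checking Proposition~\ref{loc crit for d rational prime}'s hypotheses; (ii) dispatch $(1)$ and $(2)$ quickly; (iii) state and prove the congruence $\pi^{(r-1)/2} \equiv r \pmod{\mfP^5}$ (equivalently in $\Z/32\Z$); (iv) identify the squares in $(\Z/32\Z)^\ast$ as $\{1,9,17,25\}$; (v) conclude that hypothesis $(3)$ of the corollary is exactly hypothesis $(3)$ of the proposition after descent.
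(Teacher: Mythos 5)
Your reduction to Proposition~\ref{loc crit for d rational prime} with $K=\Q$, $\mfP=2\mcO_{\Q^+}$, and the quick disposal of hypotheses $(1)$ and $(2)$, all match the paper. Your identification of the squares in $(\Z/32\Z)^\ast$ as $\{1,9,17,25\}$ and the role of $r\not\equiv 1\pmod 8$ are also correct.

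The gap is in how you propose to descend the $\mfP^5$-congruences to $\Z/32\Z$. You flag this yourself as ``the main obstacle'', but the lemma you propose to prove --- $\pi^{(r-1)/2}\equiv r\pmod{\mfP^5}$ --- conflates raising to the $(r-1)/2$-th power with taking the norm. These agree for rational elements (so $d^{(r-1)/2}=\mathrm{Norm}_{\Q^+/\Q}(d)$ is fine), but for $\pi=\zeta_r+\zeta_r^{-1}-2\notin\Q$ the norm is $\prod_\sigma\sigma(\pi)$, a product over genuinely distinct conjugates; it is not $\pi^{(r-1)/2}$, and one cannot expect $\pi^{(r-1)/2}$ to lie in $\Z/32\Z$ at all. (Frobenius at $\mfP$ acts as $x\mapsto x^2$ only on the residue field $\mcO_{\Q^+}/\mfP$, not modulo $\mfP^5$.) Trying to prove the congruence as stated would be a dead end.

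The correct mechanism --- and what the paper uses --- is the elementary norm-reduction Lemma~\ref{Norm red lemma}: if $\lambda\equiv\nu\pmod{\mfP^n}$ then $\mathrm{Norm}_{\Q^+/\Q}(\lambda)\equiv\mathrm{Norm}_{\Q^+/\Q}(\nu)\pmod{2^{[n/e']}}$, and here $e'=1$ since $2$ is inert. One simply applies the norm homomorphism to the three hypothetical congruences $\pi\equiv\nu^2$, $d\equiv v^2$, $\pi d\equiv\delta^2\pmod{\mfP^5}$. Since $\mathrm{Norm}(\nu^2)=\mathrm{Norm}(\nu)^2$ is a square in $\Z$, and $\mathrm{Norm}(\pi)=\pm r$, $\mathrm{Norm}(d)=d^{(r-1)/2}$, $\mathrm{Norm}(\pi d)=\pm r d^{(r-1)/2}$, this gives at once that $\pm r$, $d^{(r-1)/2}$, $\pm r d^{(r-1)/2}$ would each be odd squares mod $32$, contradicting the hypotheses. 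No element-level power computation is needed; replace step~(iii) of your plan with an application of this norm lemma and the argument goes through.
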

\begin{remark}
	Note that the above corollary is not true for $r=5$. This follows from the fact that for all primes $d >2$, we have $d^{\frac{r-1}{2}} =d^2 \pmod {32} \in  \{1,9,17,25\}$.
\end{remark}
\begin{example}
	Corollary~\ref{cor1 to loc crit d prime} holds for $r=7$ with $d =5,37, 53$ and for $r=11$ with $d=5,13, 29, 37, 53$.
\end{example}
\label{cor2 to loc crit d prime}

To prove Corollary~\ref{cor1 to loc crit d prime}, we need to recall the following basic result in algebraic number theory, which can be conveniently found in \cite[Lemma 31]{M23}.
\begin{lem} 
	\label{Norm red lemma}
	Let $\lambda, v \in \mcO_{K^+}$ and let $n \in \N$. Let $\mfP$ be the unique prime of $K^+$ lying above $2$. If $\lambda \equiv v^2 \pmod {\mfP^n}$, then $\text{Norm}_{K^+/K}(\lambda) \equiv \text{Norm}_{K^+/K}(v^2) \pmod {\mfQ^{[n/{e'}]}}$, where  $\mfQ$ be the unique prime of $K$ lying above $2$ and $e':=e(\mfP/\mfQ)$.
\end{lem}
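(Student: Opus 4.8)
\emph{Proof plan.} The plan is to extend $v_\mfP$ to a valuation on an algebraic closure of $K$, use the hypothesis that $\mfP$ is the \emph{only} prime of $K^+$ above $2$ (equivalently, above $\mfQ$) to control the sizes of all the Galois conjugates of an element of $K^+$, and then read off the congruence from the multilinear expansion of the norm.

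First I would set up the valuation-theoretic framework. Put $m=[K^+:K]$ and let $\Sigma=\Hom_K(K^+,\overline{K})$, so that $\mathrm{Norm}_{K^+/K}(x)=\prod_{\sigma\in\Sigma}\sigma(x)$ for $x\in K^+$ (here $|\Sigma|=m$ since we are in characteristic $0$). Fix a valuation $w$ on $\overline{K}$ extending $v_\mfP$ (with respect to the inclusion $K^+\subseteq\overline{K}$); then automatically $w|_K=v_\mfP|_K=e'\,v_\mfQ$. The key point to establish is
\[
w(\sigma(x))=v_\mfP(x)\qquad\text{for all }\sigma\in\Sigma\text{ and all }x\in K^+.
\]
Indeed, for any $\sigma\in\Sigma$ the map $w\circ\sigma$ is a valuation of $K^+$ whose restriction to $K$ equals $w|_K=e'\,v_\mfQ$; hence $w\circ\sigma$ corresponds to a prime of $K^+$ lying over $\mfQ$, and since $\mfP$ is the unique such prime, $w\circ\sigma$ is equivalent to $v_\mfP$, say $w\circ\sigma=c\,v_\mfP$ with $c>0$. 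Comparing the two on $K$ gives $e'\,v_\mfQ=c\,e'\,v_\mfQ$, so $c=1$ and $w\circ\sigma=v_\mfP$ on all of $K^+$. (Equivalently: $K^+\otimes_K K_\mfQ=(K^+)_\mfP$ is a field, and the valuation of a local field extends uniquely to each of its $K_\mfQ$-conjugates.)

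Next, write $\lambda=\nu+\delta$ with $\delta:=\lambda-\nu$, so $v_\mfP(\delta)\ge n$ by hypothesis while $v_\mfP(\nu)\ge 0$ since $\nu\in\mcO_{K^+}$. Expanding the product,
\begin{align*}
\mathrm{Norm}_{K^+/K}(\lambda)-\mathrm{Norm}_{K^+/K}(\nu)
&=\prod_{\sigma\in\Sigma}\bigl(\sigma(\nu)+\sigma(\delta)\bigr)-\prod_{\sigma\in\Sigma}\sigma(\nu)\\
&=\sum_{\emptyset\ne T\subseteq\Sigma}\ \prod_{\sigma\in T}\sigma(\delta)\cdot\prod_{\sigma\notin T}\sigma(\nu).
\end{align*}
By the displayed identity above, each summand has $w$-value at least $|T|\,n\ge n$. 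By the ultrametric inequality, $w\bigl(\mathrm{Norm}_{K^+/K}(\lambda)-\mathrm{Norm}_{K^+/K}(\nu)\bigr)\ge n$. But this element lies in $K$, where $w=e'\,v_\mfQ$, so $v_\mfQ\bigl(\mathrm{Norm}_{K^+/K}(\lambda)-\mathrm{Norm}_{K^+/K}(\nu)\bigr)\ge n/e'$, and being an integer it is $\ge [n/e']$. This is exactly the asserted congruence modulo $\mfQ^{[n/e']}$.

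The only step that requires genuine care is the valuation identity $w(\sigma(x))=v_\mfP(x)$: it is precisely the hypothesis that $2$ (equivalently $\mfQ$) admits a single prime above it in $K^+$ that forces all conjugates $\sigma(x)$ to have the same $\mfP$-adic size; without it the valuations $w\circ\sigma$ could be genuinely inequivalent and the term-by-term estimate would fail. Everything else---the multilinear expansion of the norm, the ultrametric inequality, and the bookkeeping with the ramification index $e'$---is routine.
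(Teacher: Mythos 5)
Your proof is correct. Note that the paper itself does not prove this lemma but simply cites \cite[Lemma~31]{M23}, so there is no in-paper argument to compare against; judged on its own, your argument is complete and sound. The two nontrivial points are both handled properly: (i) the valuation identity $w(\sigma(x))=v_\mfP(x)$ for every $K$-embedding $\sigma$, which you correctly derive from the hypothesis that $\mfP$ is the unique prime of $K^+$ over $\mfQ$ (without this, the conjugates $\sigma(\delta)$ could have different $w$-values and the term-by-term bound would break down), and (ii) the multilinear expansion of $\mathrm{Norm}_{K^+/K}(\nu+\delta)-\mathrm{Norm}_{K^+/K}(\nu)$ together with the ultrametric inequality, giving $w$-value $\ge n$ for the difference, hence $v_\mfQ \ge n/e'$, and since $v_\mfQ$ is an integer this is $\ge\lceil n/e'\rceil\ge [n/e']$, which is exactly (indeed slightly more than) what the lemma asserts.
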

\begin{proof}[Proof of Corollary~\ref{cor1 to loc crit d prime}]
	To prove Corollary~\ref{cor1 to loc crit d prime}, we need to check that all the hypotheses of Proposition~\ref{loc crit for d rational prime} are satisfied in the settings of Corollary~\ref{cor1 to loc crit d prime}.
	Here $K= \Q$, clearly the condition (1) of Proposition~\ref{loc crit for d rational prime} is satisfied. Since $d \equiv 1 \pmod4$, it follows that $d \equiv 1 \pmod{\mfP^2}$ and hence (2) is satisfied. We will prove the condition (3) by contradiction.
	
	Suppose there exists $\nu \in \mcO_{K^+}$ such that $\pi \equiv \nu^2 \pmod {\mfP^5}$. Applying Lemma~\ref{Norm red lemma} to $K =\Q$, $\lambda =\pi$, $n=5$, we get $r \equiv \alpha^2 \pmod {2^5}$, for some $\alpha \in \Z$. We arrive at a contradiction to $r \not\equiv 1 \pmod 8$ since only odd squares modulo $32$ are $\{1,9, 17, 25\}$. 
	
	Suppose there exists $v \in \mcO_{K^+}$ such that $d \equiv v^2 \pmod {\mfP^5}$. Applying Lemma~\ref{Norm red lemma} to $K =\Q$, $\lambda =d$, $n=5$, we get $d^{\frac{r-1}{2}} \equiv \beta^2 \pmod {2^5}$, for some $\beta \in \Z$. We get a contradiction, as $d^{\frac{r-1}{2}} \not\equiv 1,9,17,25 \pmod {32}$ by our assumption. 
	
	Suppose there exists $\delta \in \mcO_{K^+}$ such that $\pi d \equiv \delta^2 \pmod {\mfP^5}$. Applying Lemma~\ref{Norm red lemma} to $K =\Q$, $\lambda = \pi d$, $n=5$, we get $rd^{\frac{r-1}{2}} \equiv \gamma^2 \pmod {2^5}$, for some $\gamma \in \Z$. We get a contradiction, as $ rd^{\frac{r-1}{2}} \not\equiv 1,9,17,25 \pmod {32}$. 
	Hence the condition (3) is satisfied. We are done with the proof.
\end{proof}

\subsection{Let $d\in \mcO_K^\ast$}
\label{section for d is a unit}
We now consider the case where $d \in \mcO_K^\ast$. The results in this subsection are analogous to the corresponding results of \cite{M23} on $x^r+y^r=z^p$.
\begin{prop}
	Let $K$ be a totally real number field and let $r \geq 5$ be a rational prime. Let $K^+=K(\zeta_r+ \zeta_r^{-1})$ and $\pi :=\zeta_r + \zeta_r^{-1}-2$. Assume that $d \in \mcO_K^\ast$ and
	\begin{enumerate}
		\item $2 \nmid h_{K^+}^+$ and $r$ is inert in $K$;
		\item Assume that there exists a unique prime $\mfP$ of $K^+$ lying above $2$ with ramification index $e:= e(\mfP,2)$;
		\item the congruence $\pi \equiv \nu^2 \pmod {\mfP^{4e+1}}$ has no solutions for $ \nu \in \mcO_{K^+}/ {\mfP^{4e+1}}$;
	\end{enumerate}
	Then the conclusion of Theorem~\ref{main result1 for (r,r,p)} holds over $K^+$.
\end{prop}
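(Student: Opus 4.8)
In view of Theorem~\ref{main result1 for (r,r,p)}, it suffices to verify that hypothesis~\eqref{assumption for main result1} holds over $K^+$: writing $\mfP$ for the unique prime of $K^+$ above $2$ (hypothesis~(2)), one must show that every solution $(\lambda,\mu)$ of the $S_{K^+,2rd}$-unit equation $\lambda+\mu=1$ satisfies $\max\{|v_\mfP(\lambda)|,|v_\mfP(\mu)|\}\le 4v_\mfP(2)=4e$. Since $d\in\mcO_K^\ast$ we have $S_{K^+,2rd}=S_{K^+,2r}$, so the argument runs parallel to the proof of Proposition~\ref{loc crit for d rational prime} but is simpler, as there is no $d$-exponent to control. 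By Lemma~\ref{integral} one may replace $(\lambda,\mu)$ by a solution $(\lambda',\mu')$ with $v_\mfP(\lambda'),v_\mfP(\mu')\ge 0$ and the same value of $\max\{|v_\mfP(\cdot)|\}$. Arguing by contradiction, suppose some solution has this maximum $\ge 4e+1$; using finiteness of the $S$-unit equation's solution set, pick one with $v_\mfP(\mu')$ as large as possible (swapping $\lambda',\mu'$ if necessary). Then $\lambda'+\mu'=1$ forces $v_\mfP(\lambda')=0$, hence $\lambda'\in\mcO_{S_{K^+,r}}^\ast$ and $\lambda'\equiv 1\pmod{\mfP^{4e+1}}$.

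The crux is to show $\lambda'$ is a square in $\mcO_{S_{K^+,r}}^\ast$. By hypothesis~(1), $r$ inert in $K$, the set $S_{K^+,r}$ consists of a single prime, which is generated by $\pi$ (exactly as in the corresponding step of~\cite{M23}), so $\lambda'=\alpha\pi^s$ with $\alpha\in\mcO_{K^+}^\ast$ and $s\in\Z$, and it is enough to prove $\alpha$ is a square and $s$ is even. For the unit part, the identity $\bigl(\zeta_r^{(r-1)/2}+\zeta_r^{-(r-1)/2}\bigr)^2-4=\pi$ gives $\pi\equiv\epsilon^2\pmod{\mfP^{2e}}$ with $\epsilon:=\zeta_r^{(r-1)/2}+\zeta_r^{-(r-1)/2}\in\mcO_{K^+}^\ast$; combined with $\lambda'\equiv 1\pmod{\mfP^{4e+1}}$ this yields $\alpha\equiv\beta^2\pmod{\mfP^{2e}}$ for $\beta:=\epsilon^{-s}$, and hence $(\beta^2-\alpha)/4\in\mcO_{K^+}$ because $v_\mfP(4)=2e$ and $\mfP$ is the only prime above $2$. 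If $\alpha$ were a non-square, then $X^2-\beta X+(\beta^2-\alpha)/4\in\mcO_{K^+}[X]$ has unit discriminant $\alpha$, so $K^+(\sqrt\alpha)/K^+$ would be unramified at every finite prime, contradicting hypothesis~(1). For the parity of $s$: if $s=2k+1$, then writing $\alpha=\gamma^2$ we get $\gamma^2\pi^{2k+1}=\lambda'\equiv 1\pmod{\mfP^{4e+1}}$, so $\pi\equiv(\gamma^{-1}\pi^{-k})^2\pmod{\mfP^{4e+1}}$; lifting the $\mfP$-unit $\gamma^{-1}\pi^{-k}$ to an element of $\mcO_{K^+}$ exhibits a solution of the congruence $\pi\equiv\nu^2\pmod{\mfP^{4e+1}}$, contradicting hypothesis~(3). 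Thus $\lambda'=\rho^2$ for some $\rho\in\mcO_{S_{K^+,r}}^\ast$.

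Finally, $\mu'=(1+\rho)(1-\rho)$; set $t_1:=v_\mfP(1+\rho)$ and $t_2:=v_\mfP(1-\rho)$, so $t_1+t_2=v_\mfP(\mu')\ge 4e+1$. Since $(1+\rho)-(1-\rho)=2\rho$ has $\mfP$-valuation $e$, we get $\min\{t_1,t_2\}\le e$; after possibly replacing $\rho$ by $-\rho$ we may assume $t_1\le e$, so $t_2\ge 3e+1$. From the factorization of $\mu'\in\mcO_{S_{K^+,2r}}^\ast$ one checks that $1\pm\rho\in\mcO_{S_{K^+,2r}}^\ast$, hence $(\lambda'',\mu''):=\bigl((1+\rho)^2/(4\rho),\,-(1-\rho)^2/(4\rho)\bigr)$ is again a solution of the $S_{K^+,2r}$-unit equation, and $v_\mfP(\mu'')=2t_2-2e\ge 2v_\mfP(\mu')-4e>v_\mfP(\mu')$, the last step because $v_\mfP(\mu')\ge 4e+1>4e$. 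This contradicts the maximality of $v_\mfP(\mu')$ and finishes the argument.

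The step I expect to be most delicate is showing $\lambda'$ is a square: one must keep careful track of the two different moduli ($\mfP^{2e}$ for the unit part $\alpha$ versus $\mfP^{4e+1}$ for the $\pi$-exponent $s$), verify that $K^+(\sqrt\alpha)/K^+$ really is unramified at the primes above $2$ (which is precisely where $4\mid\beta^2-\alpha$ together with uniqueness of $\mfP$ is used), and justify that $S_{K^+,r}$ is principal, generated by $\pi$, under hypothesis~(1). The concluding descent step, producing a solution with strictly larger $v_\mfP(\mu')$, is then a short direct computation.
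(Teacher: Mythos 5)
Your proposal is correct and, at bottom, follows the same route as the paper, but it does noticeably more work: the paper's own proof is essentially a one-line observation that $d\in\mcO_K^\ast$ gives $S_{K^+,2rd}=S_{K^+,2r}=S_{K^+,2}\cup S_{K^+,r}$, after which it simply invokes Theorem~\ref{main result1 for (r,r,p)} together with \cite[Theorem~6]{M23}, which already contains the $S_{K^+,2r}$-unit descent you reconstruct. What you have done is inline the content of \cite[Theorem~6]{M23}: the reduction to integral solutions via Lemma~\ref{integral}, the identification of $S_{K^+,r}$ with the principal prime $\pi\mcO_{K^+}$ under ``$r$ inert in $K$,'' the decomposition $\lambda'=\alpha\pi^s$, the class-number and congruence arguments showing $\alpha$ is a square and $s$ is even, and the final $2$-descent on $v_\mfP(\mu')$. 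These steps also mirror the paper's own proof of Proposition~\ref{loc crit for d rational prime}, with the simplification (as you note) that there is no $d$-exponent to control and the slight generalization to ramification index $e\ge 1$ (using $\pmod{\mfP^{2e}}$ and $\pmod{\mfP^{4e+1}}$ where the rational-prime case has $e=1$). Your bookkeeping of the two moduli, the role of the uniqueness of $\mfP$ above $2$ in showing $(\beta^2-\alpha)/4\in\mcO_{K^+}$, and the valuation computation in the final descent are all correct. One small caveat, inherited verbatim from the source you are reproducing: the unramifiedness argument only controls finite primes, so strictly one should appeal to the narrow class number of $K^+$ (as in Corollary~\ref{loc crit for Q}), and the notation $h_K^+$ in the statement is ambiguous between that and the ordinary class number; but this is a quirk of the paper's conventions, not a gap introduced by your proof.
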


\begin{proof}
	Recall that $S_{K^+, 2rd}=\{\mfp: \mfp\in P_{K^+} \text{ with } \mfp |2rd\}.$ Since $d\in \mcO_K^\ast$, we have $ S_{K^+, 2rd}=S_{K^+,2r} =S_{K^+, 2} \cup S_{K^+, r}$.  Now, the proof of \cite[Theorem 6]{M23} shows that the conditions of Theorem~\ref{main result1 for (r,r,p)} are satisfied, hence the conclusion of Theorem~\ref{main result1 for (r,r,p)} holds over $K^+$.
\end{proof}
The following two corollaries are generalizations of \cite[Corollaries 7-9]{M23} from $x^r+y^r=z^p$ to $x^r+y^r=dz^p$. Corollary~\ref{loc crit for Q} (respectively Corollary~\ref{loc crit for quaqdratic field}) gives local criteria when $K =\Q$ (respectively $K$ is a real quadratic field), and its proof follows from Theorem~\ref{main result1 for (r,r,p)} since $S_{K^+,2rd}= S_{K^+, 2r}$.
\begin{cor}
	\label{loc crit for Q}
	Let $r\geq 5$ be a rational prime with $r \not\equiv 1 \pmod 8$ and $\Q^+:=\Q(\zeta_r + \zeta_r^{-1})$. Assume that $d \in \mcO_K^\ast$. Suppose one of the following conditions holds.
	\begin{enumerate}
		\item $2$ is inert in $\Q^+$ and $2 \nmid h_{\Q^+}^+$;
		\item $r \in \{5,7,11,13, 19, 23, 37, 47, 53, 59, 61, 67, 71, 79, 83, 101, 103, 107, 131, 139, 149\}$. 
	\end{enumerate}
	Then the conclusion of Theorem~\ref{main result1 for (r,r,p)} holds over $\Q^+$. 
\end{cor}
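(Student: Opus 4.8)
The goal is to verify that one of the two stated hypotheses implies condition (3) of the preceding Proposition (the $d \in \mcO_K^\ast$ case), namely that $\pi \equiv \nu^2 \pmod{\mfP^{4e+1}}$ has no solution, and also conditions (1)--(2). The plan is to reduce everything to the situation $K = \Q$, where $e = e(\mfP/2)$ is simply the ramification index of $2$ in $\Q^+ = \Q(\zeta_r + \zeta_r^{-1})$, and then invoke Lemma~\ref{Norm red lemma} exactly as in the proof of Corollary~\ref{cor1 to loc crit d prime}. Concretely, under hypothesis (1) we have $2$ inert in $\Q^+$, so $e = 1$ and $\mfP = 2\mcO_{\Q^+}$; condition (2) of the Proposition is then immediate (unique prime above $2$), and $2 \nmid h_{\Q^+}^+$ gives the first half of condition (1). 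Since $r$ is inert in $\Q$ trivially (there is only one prime of $\Q$ above any rational prime), the rest of condition (1) holds as well.

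For condition (3) with $e = 1$ I must show $\pi \equiv \nu^2 \pmod{\mfP^5}$ has no solution $\nu \in \mcO_{\Q^+}/\mfP^5$. Suppose it did. Then $\pi \equiv \nu^2 \pmod{2^5 \mcO_{\Q^+}}$ since $\mfP = 2\mcO_{\Q^+}$, and applying Lemma~\ref{Norm red lemma} with $K = \Q$, $\lambda = \pi$, $n = 5$, $e' = 1$, we obtain $\mathrm{Norm}_{\Q^+/\Q}(\pi) \equiv \mathrm{Norm}_{\Q^+/\Q}(\nu)^2 \pmod{2^5}$. Now $\pi = \zeta_r + \zeta_r^{-1} - 2 = -(1-\zeta_r)(1-\zeta_r^{-1})\zeta_r^{-1}$, and a standard computation of the norm of $1 - \zeta_r$ down the cyclotomic tower gives $\mathrm{Norm}_{\Q^+/\Q}(\pi) = \pm r$ (up to sign, which one checks is irrelevant mod $32$, or one fixes it by a degree/sign count); hence $r \equiv \square \pmod{32}$. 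Since the odd squares modulo $32$ are exactly $\{1, 9, 17, 25\}$, all of which are $\equiv 1 \pmod 8$, this contradicts $r \not\equiv 1 \pmod 8$. So (3) holds, and the Proposition applies, giving Theorem~\ref{main result1 for (r,r,p)} over $\Q^+$ under hypothesis (1).

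For hypothesis (2), the plan is a finite verification: for each $r$ in the listed set one checks directly (e.g.\ by computing in $\Q^+ = \Q(\zeta_r + \zeta_r^{-1})$, whose degree is $(r-1)/2$) that $2$ is inert in $\Q^+$ and that $h_{\Q^+}^+ $ is odd, i.e.\ that hypothesis (1) is in fact satisfied for every such $r$; these are exactly the primes $r \le 149$ with $r \not\equiv 1 \pmod 8$ for which the real cyclotomic field $\Q(\zeta_r)^+$ has $2$ inert and odd narrow class number, and this is a known tabulated fact (it already underlies \cite[Corollaries 7--9]{M23}). Thus hypothesis (2) is subsumed by hypothesis (1) for these specific $r$, and the corollary follows in both cases from the Proposition.

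\textbf{Main obstacle.} The only non-formal point is the norm computation $\mathrm{Norm}_{\Q^+/\Q}(\pi) = \pm r$ together with controlling the sign well enough to run the quadratic-residue argument mod $32$; this is routine cyclotomic bookkeeping (it is essentially the $K = \Q$ specialization of what is done in Corollary~\ref{cor1 to loc crit d prime}, with $d$ removed), so I expect no real difficulty. The enumeration of the class-number/inertia data for the listed primes $r$ in part (2) is a finite computational check rather than a conceptual obstacle.
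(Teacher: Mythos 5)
Your route is genuinely different from the paper's, and it has a gap. The paper dispatches this corollary in one line: since $d\in\mcO_K^\ast$ one has $S_{K^+,2rd}=S_{K^+,2r}$, so the $S$-unit hypothesis of Theorem~\ref{main result1 for (r,r,p)} is literally the same as the one already verified in \cite[Corollaries 7--9]{M23} for the equation $x^r+y^r=z^p$; nothing further is checked in the paper. You instead try to rederive the local criterion from scratch by verifying the three hypotheses of the $d\in\mcO_K^\ast$ proposition and running a norm argument via Lemma~\ref{Norm red lemma}. That is a reasonable independent strategy, but the norm step does not close for a whole residue class of $r$.

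The problem is the sign you wave away. Since $\pi=\zeta_r+\zeta_r^{-1}-2=-(1-\zeta_r)(1-\zeta_r^{-1})$ is totally negative (every conjugate $2\cos(2\pi k/r)-2$ lies in $(-4,0)$), one gets exactly
\[
\mathrm{Norm}_{\Q^+/\Q}(\pi)=(-1)^{(r-1)/2}\,r ,
\]
which is $-r$ whenever $r\equiv 3\pmod 4$. So the conclusion of Lemma~\ref{Norm red lemma} is that $(-1)^{(r-1)/2}r$, not $r$, is a square modulo $32$. For $r\equiv 3\pmod 8$ this still reads $-r\equiv 5\pmod 8$, which is not a square mod $32$, and for $r\equiv 5\pmod 8$ it reads $r\equiv 5\pmod 8$; both give the desired contradiction. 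But for $r\equiv 7\pmod 8$ one gets $-r\equiv 1\pmod 8$, which \emph{is} a possible square residue mod $32$: e.g.\ for $r=7$, $\mathrm{Norm}_{\Q^+/\Q}(\pi)=-7\equiv 25=5^2\pmod{32}$. Hence Lemma~\ref{Norm red lemma} yields no contradiction, and your verification of condition (3) of the proposition fails for $r\equiv 7\pmod 8$. Since $7,23,47,71,79,103$ all appear in the list of hypothesis (2) and all satisfy $r\equiv 7\pmod 8$, both branches of your argument are incomplete for those $r$. To repair it you would either have to check directly that $\pi$ is a non-square in $\mcO_{\Q^+}/\mfP^5$ for those particular fields (the norm obstruction alone cannot decide it), or simply take the paper's shortcut through \cite{M23}. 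One smaller remark: you implicitly read the first hypothesis of the proposition as ``$2\nmid h_{K^+}^+$'' rather than the printed ``$2\nmid h_K^+$''; that reading is almost certainly the intended one (it is what the corollary's hypothesis $2\nmid h_{\Q^+}^+$ requires), but it is worth flagging.
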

\begin{cor}
	\label{loc crit for quaqdratic field}
	Let $r\geq 5$ be a rational prime. For any square-free positive integer $t \geq 2$, let $K=\Q(\sqrt{t})$ and $K^+:=K(\zeta_r + \zeta_r^{-1})$. Assume that $d \in \mcO_K^\ast$ and
	\begin{enumerate}
		\item $r$ is inert in $K$ and $r \not \equiv 1, t\pmod 8$;
		\item $2 \nmid h^+_{K^+}$;
		\item there exists a unique prime $\mfP$ of $K^+$ lying above $2$;
	\end{enumerate}
	Then the conclusion of Theorem~\ref{main result1 for (r,r,p)} holds over $K^+$. 
\end{cor}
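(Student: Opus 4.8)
The plan is to deduce Corollary~\ref{loc crit for quaqdratic field} from the Proposition above that treats the case $d\in\mcO_K^\ast$ (the one whose third hypothesis asks that $\pi:=\zeta_r+\zeta_r^{-1}-2$ not be a square modulo $\mfP^{4e+1}$); equivalently, the hypotheses here are a special case of the situation already treated for $x^r+y^r=z^p$ in \cite{M23}. The one genuinely new point is that, since $d\in\mcO_K^\ast\subseteq\mcO_{K^+}^\ast$ contributes no prime to $S_{K^+,2rd}$, we have $S_{K^+,2rd}=S_{K^+,2r}$, so the $S_{K^+,2rd}$-unit equation~\eqref{S_K-unit solution} is literally the $S_{K^+,2r}$-unit equation. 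The three hypotheses of that Proposition -- $2\nmid h_{K^+}$, $r$ inert in $K$, and the existence of a unique prime $\mfP$ of $K^+$ above $2$ (with $e:=e(\mfP,2)$) -- follow at once from hypotheses (2), (1), (3) of the corollary, so everything reduces to verifying the Proposition's third hypothesis: that $\pi\equiv\nu^2\pmod{\mfP^{4e+1}}$ has no solution. Since this congruence and the set $S_{K^+,2r}$ are unchanged by the presence of a unit $d$, the verification is identical to the $d=1$ case of \cite[Corollaries 7--9]{M23}, where $r\not\equiv1,t\pmod8$ is precisely the condition used for real quadratic $K$.

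For a self-contained account I would argue as follows. Since $r$ is inert in $K$ it is unramified there, so $r\nmid t$; hence $\Q(\sqrt t)$ and $\Q(\zeta_r+\zeta_r^{-1})$ ramify at disjoint rational primes, $K^+$ is their compositum with $[K^+:\Q]=r-1$, and, $\Q(\zeta_r+\zeta_r^{-1})/\Q$ having discriminant a power of $r$ and so being unramified at $2$, the extension $K^+/K$ is unramified above $2$. Writing $\mfQ:=\mfP\cap\mcO_K$ for the prime of $K$ below $\mfP$, this gives $e(\mfP/\mfQ)=1$ and $e=e(\mfQ,2)$, which equals $1$ if $t\equiv1\pmod4$ and $2$ if $t\equiv2,3\pmod4$. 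Now assume $\pi\equiv\nu^2\pmod{\mfP^{4e+1}}$. Applying the norm-reduction Lemma~\ref{Norm red lemma} to $K^+/K$ (with $e'=e(\mfP/\mfQ)=1$ and $n=4e+1$) gives $\mathrm{Norm}_{K^+/K}(\pi)\equiv\mathrm{Norm}_{K^+/K}(\nu)^2\pmod{\mfQ^{4e+1}}$, and since $\pi\in\Q(\zeta_r+\zeta_r^{-1})$ its norm to $\Q$ equals $\prod_{j=1}^{(r-1)/2}(\zeta_r^j+\zeta_r^{-j}-2)=(-1)^{(r-1)/2}r$, using $\prod_{k=1}^{r-1}(1-\zeta_r^k)=r$. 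One then studies the square classes in $\mcO_K/\mfQ^{4e+1}$ according to whether $2$ splits, is inert, or ramifies in $K=\Q(\sqrt t)$ -- behaviour governed by $t\bmod8$ -- and uses $r\not\equiv1,t\pmod8$ to conclude that $\pi$ is not a square there; in the cases where the norm itself happens to be a square modulo $\mfQ^{4e+1}$ one further exploits the cyclotomic relation satisfied by $\pi$, not merely its norm. Having secured the Proposition's hypothesis (3), the Proposition delivers the conclusion of Theorem~\ref{main result1 for (r,r,p)} over $K^+$.

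I expect the delicate step to be this last square-class analysis, particularly in the ramified case $t\equiv2,3\pmod4$: there $e=2$ and one works modulo $\mfQ^9$, where $2$ is not a uniformizer, so the group of square classes of $(\mcO_K/\mfQ^9)^\times$ is larger and less transparent than that of $(\Z/2^9)^\times$, and one must track exactly which residues the condition $r\not\equiv t\pmod8$ is needed to exclude. Because $d$ has disappeared from $S_{K^+,2rd}=S_{K^+,2r}$, this step is literally the corresponding argument of \cite[Corollaries 7--9]{M23}, so no new computation is needed.
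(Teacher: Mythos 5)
Your proposal matches the paper's argument: both hinge on the observation that $d\in\mcO_K^\ast$ makes $S_{K^+,2rd}=S_{K^+,2r}$, which reduces the verification of Theorem~\ref{main result1 for (r,r,p)}'s hypotheses verbatim to the $d=1$ case of \cite[Corollaries 7--9]{M23}, and the paper's one-sentence proof simply cites this. Your additional ``self-contained'' account of the \cite{M23} verification (norm of $\pi$ via Lemma~\ref{Norm red lemma}, square-class analysis modulo $\mfQ^{4e+1}$) is a sound outline that goes beyond what the paper supplies; the only small remark is that hypothesis~(3) already forces a unique prime of $K$ above $2$, so the split case you list in the case analysis is vacuous.
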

\begin{example}
	Corollary~\ref{loc crit for quaqdratic field} holds for $t=2$ with $r \in \{5,11,13 \}$ and $\mfP=(\sqrt{2})\mcO_{K^+}$, and for $t=5$ with $r=7$ and $\mfP=(2)\mcO_{K^+}$.
\end{example}

\section*{Acknowledgments} 
The authors thank Diana Mocanu for discussing the article \cite{M23} during the Lorentz Center Workshop: ``An Expedition into Arithmetic Geometry". We also thank Nuno Freitas for answering several questions and comments. We also thank Narasimha Kumar for the insightful discussions. The authors are grateful to the anonymous referee for a careful reading of the manuscript and for the mathematical suggestions and comments towards the improvement of this article. S. Jha is supported by ANRF grant CRG/2022/005923.


\end{document}